\newcommand{\w}{\omega}
\renewcommand{\P}{\mathbb{P}}
\newcommand{\dualP}{(\P^2)^\vee}
\newcommand{\sheafG}{\mathcal{G}}
\newcommand{\sheafF}{\mathcal{F}}
\newcommand{\Z}{\mathbb{Z}}
\newcommand{\OO}{\mathcal{O}}
\newcommand{\btimes}{\;\pspicture(0,0)\psdots[dotstyle=+,dotscale=2,dotangle=45](0,0.1)\endpspicture\;}
\newcommand{\A}{A\otimes_Y}
\newcommand{\PP}{\mathbb{P}^1\times\mathbb{P}^1}
\renewcommand{\P}{\mathbb{P}}
\newcommand{\rotsymb}[2]{\begin{rotate}{#2}
	{$#1$}
	\end{rotate}}
\newcommand{\longhookrightarrow}{\xymatrix@C=15pt{\ar@{^{(}->}[r]&}}
\newcommand{\gen}[1]{\langle #1 \rangle}
\newcommand{\Gal}[2]{{\rm Gal}\left(#1/#2 \right)}
\newcommand{\Pic}{{\rm Pic}\;}
\newcommand{\tr}{\rm tr\;}
\newcommand{\Hilb}{{\bf Hilb}\;A}
\newcommand{\EXT}[4]{{\mathcal Ext}^{#1}_{#2}\left(#3,#4\right)}
\newcommand{\ext}[4]{{\rm ext}^{#1}_{#2}\left(#3,#4\right)}
\newcommand{\Ext}[4]{{\rm Ext}^{#1}_{#2}\left(#3,#4\right)}
\newcommand{\HOM}[3]{{\mathcal Hom}_{#1}\left(#2,#3\right)}
\renewcommand{\hom}[3]{{\rm hom}_{#1}\left(#2,#3\right)}
\newcommand{\Hom}[3]{{\rm Hom}_{#1}\left(#2,#3\right)}
\newtheoremstyle{upright}%
        {8pt plus2pt minus4pt}%
        {8pt plus2pt minus4pt}%
	 {\upshape}%
       {}%
       {\bfseries}
        {.}%
        {1em}%
       {}%
 \newtheorem{Lemma}{Lemma}[section]
 \newtheorem{Thm}[Lemma]{Theorem}
 \newtheorem{Cor}[Lemma]{Corollary}
 \newtheorem{Prop}[Lemma]{Proposition}
 \newtheorem{Def}[Lemma]{Definition}
 \theoremstyle{upright}
 \newtheorem{Remark}[Lemma]{Remark}
 \newtheorem{Eg}[Lemma]{Example}
 \newtheorem{Construction}[Lemma]{Construction}
\title{Line Bundles and Curves on a del Pezzo Order}
\author{Boris Lerner\\  {\footnotesize University of New South Wales}\\
}
\begin{document}
\maketitle
\begin{abstract}
     Orders on surfaces provided a rich source of examples of noncommutative surfaces. In \cite{Hoffmann}
        the authors prove the existence of the analogue of the Picard scheme for orders
        and in \cite{main} the Picard scheme is explicitly computed for an order on $\P^2$ ramified
        on a smooth quartic. In this paper, we continue this line of work, by studying the Picard and Hilbert schemes
        for an order on $\P^2$ ramified on a union of two conics. Our main result is that, upon carefully
        selecting the right Chern classes, the Hilbert scheme is a ruled surface over a genus two curve. Furthermore,
        this genus two curve is, in itself, the Picard scheme of the order.
\end{abstract}
Throughout this paper we assume all objects and maps are defined over an algebraically closed field $k$ of characteristic zero. 
We denote the dimension of any cohomology group over $k$ by the name of the group written
with a non-capital letter for e.g. $\ext iAMN:={\rm dim}_k\Ext iAMN$ and similarly for $h^i$ and ${\rm hom}$.

\section{Introduction}

The study of moduli spaces is an integral part of modern algebraic geometry and representation theory. It is thus very natural,
if one is studying noncommutative surfaces, to wish to understand the various moduli spaces that can be associate to them.
However, even in the commutative case, let alone the noncommutative one, very few examples have been explicitly computed -- which
is what we aim to achieve in this paper.
A rich class on noncommutative surfaces, that has been extensively studied, is that of orders on surfaces, which we now define.

\begin{Def}
        Let $X$ be a normal integral surface. 
        An {\bf order} $A$ on $X$ is a coherent torsion free sheaf of $\OO_X$-algebras such that $k(A):=A\otimes_Xk(X)$ is a central simple
        $k(X)$-algebra. $X$ is called the {\bf centre} of $A$.
\end{Def}
        For example, if $X$ is as above, then 
        any Azumaya algebra on $X$ is an order on $X$. Furthermore,
        it is in fact a {\bf maximal order} in the sense
        that it is not properly included in any other order. For a great reference on orders on surfaces, see \cite{artinjong}
        and \cite{Lectures}.

Since orders are finite over their centres they are in some sense only mildly noncommutative and
many classical geometric techniques can be used to study them. In this paper we first fix an order $A$ on $\P^2$ ramified on a union of two
conics, and study two of its moduli spaces:
\begin{itemize}
        \item [(i)] the moduli space of line bundles on $A$ (see Definition \ref{LineBundle}),
                with a fixed set of Chern classes, denoted by ${\bf Pic}\;A$, and
        \item [(ii)] the moduli space of left quotients of $A$, with a fixed set of Chern classes, denoted by $\Hilb$.
\end{itemize}The first moduli space should be thought of as the Picard scheme of $A$, but one should note that since
$A$-line bundles are only one sided modules, this is not a group scheme. Borrowing terminology from its commutative counterparts, the second
moduli space will be referred to as the Hilbert scheme of $A$ and should be thought of as the space parametrising noncommutative curves on $A$.
Not surprisingly, these two moduli spaces are intrinsically linked; in fact we will prove that $\Hilb$ is a ruled surface over
${\bf Pic}\;A$ and that ${\bf Pic}\; A$ is a genus two curve. Furthermore, by analysing the universal family on $\Hilb$ we will show that 
$\Hilb$ maps to $\dualP \simeq\P^2$ with branch locus
being two conics and their four bitangents.

The inspiration behind this paper comes from \cite{main} where the authors, Chan and Kulkarni, study
the moduli space of line bundles on an order ramified on a smooth quartic. The reader is highly encouraged
to read that paper in order to better understand our motivation. We will explain similarities and differences
between our approaches as we go.

To enable us to begin our project, we use the noncommutative cyclic covering trick,
described in Chapter \ref{nctrick}, to construct our order on $\P^2$.
The key
ingredient to this construction, is a double cover $Y:=\PP\to Z:=\P^2$, 
a line bundle $L\in \Pic Y$ and a morphism
$\phi\!:L_\sigma^{\otimes 2}\to \OO_Y$ where $\sigma$ is the covering involution.
Using this data one constructs a sheaf of algebras $A$ on $Y$ which is an order on $Z$.

The main tool we use for studying $A$-modules is the simple observation that any such module is also naturally
an $\OO_Y$-module. In particular, this allows us to talk about the Chern classes and semistability of $A$-modules
when viewed as $\OO_Y$-modules. Furthermore, we will see that any $A$-line bundle is a rank two vector bundle on $Y$, and so 
their study
is rather different to the study of the Picard scheme of $Y$ and much closer related to the study of rank two 
vector bundles. The main points of difference are that, first of all,
$A$-line bundles do not form a group for they are only left $A$-modules and so their moduli space is not naturally a group 
scheme. Furthermore, the second Chern class, which is zero when one looks at line bundles in the usual setting, plays a crucial role
in their study, as do semistability considerations. More precisely, we are interested in studying
those $A$-line bundles which have minimal second Chern class.

It is certainly not obvious that one can place a bound on the second Chern class of $A$-line bundles and
hence talk about those $A$-line bundles with ``minimal second Chern class''. For Chan and Kulkarni,
this was achieved easily from the fact that for them, $\phi$ was an isomorphism which implied (Proposition 3.8 in \cite{main}) that
any $A$-line bundle was automatically $\mu$-semistable and so by invoking Bogomolov's inequality, this aim was achieved. The authors
used the $\mu$-semistability property further by noting by simply forgetting the extra $A$-module structure, one obtains the map
\[\left\{\txt{moduli space of\\$A$-line bundles\\with minimal $c_2$} \right\}\to\left\{ \txt{moduli space of \\$\mu$-semistable rank \\two vector bundles on $Y$ }\right\}\tag{\textasteriskcentered}\label{map}.\] It is the careful analysis of this map that allowed Chan and Kulkarni to prove that
their moduli space was a genus two curve.

In our case, $\phi$ will not be an isomorphism, and even though we will be able to deduce a lower 
bound for the second Chern class (Proposition
\ref{PropDelta}), the above map of moduli spaces will not be available for us, 
simply because $A$-modules will turn out
to be not $\mu$-semistable in general. Thus  we will use a totally
different approach.

Having bound the second Chern class we will show that it suffices to consider
only two possible first Chern classes: $c_1=\OO_Y(-1,-1)$ with corresponding minimal $c_2=0$ and $c_2=\OO_Y(-2,-2)$ with corresponding
minimal $c_2=2$. The former case will be rather simple and we will prove that the moduli space in that case is just one point. The latter
case will be far more interesting and will be the prime focus of this paper. We will prove, in Theorem \ref{quotientscomputation}, that
for any $A$-line bundle $M$ with this set of Chern classes we have the following exact sequence
\[0\longrightarrow M\longrightarrow A\longrightarrow Q\longrightarrow 0\] where
$Q$ is a quotient of $A$. This establishes a connection between the moduli space of line bundles with  minimal second Chern class
and the Hilbert scheme of $A$ which parametrises quotients of $A$ with specified Chern classes. We will explore this connection in depth and ultimately prove:
\begin{Thm}\label{MainThm}
        Let ${\bf Pic}\;A$ be the moduli space of $A$-line bundles with $c_1=\OO_Y(-2,-2)$ and $c_2=2$ and
$\Hilb$ -- the Hilbert scheme of $A$, parameterising quotients of $A$ with $c_1=\OO_Y(1,1)$ and $c_2=2$. Then
        ${\bf Pic}\;A$ is a smooth genus $2$ curve. ${\Hilb}$ is a smooth ruled surface over ${\bf Pic}\;A$. Furthermore,
        $\Hilb$ exhibits an $8:1$ cover of $\P^2$, ramified on a union of $2$ conics and their $4$ bitangents.
\end{Thm}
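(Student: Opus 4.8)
The plan is to use the exact sequence of Theorem~\ref{quotientscomputation} as a dictionary between the two moduli problems. That result attaches to every $A$-line bundle $M$ with $c_{1}=\OO_Y(-2,-2)$ and $c_{2}=2$ a short exact sequence $0\to M\to A\to Q\to 0$ in which $Q$, by its Chern classes, is a point of $\Hilb$. Reversing the arrow, the kernel of any surjection $A\twoheadrightarrow Q$ parametrised by $\Hilb$ is flat over the base with the correct invariants, so setting $M:=\ker$ defines a morphism $\pi\colon\Hilb\to{\bf Pic}\;A$ (the finiteness making all of this well posed being guaranteed by the bound of Proposition~\ref{PropDelta}). First I would verify that $\pi$ is well defined, i.e.\ flatness of the kernel in families and the fact that the kernel is a genuine $A$-line bundle. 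Since $M$ is stable and $A$ is torsion free, every nonzero homomorphism $M\to A$ is injective with cokernel a point of $\Hilb$, and two such give the same point precisely when they differ by $\mathrm{Aut}(M)=k^{*}$; hence $\pi^{-1}(M)=\P\bigl(\Hom{A}{M}{A}\bigr)$.

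The next step is to compute $\hom{A}{M}{A}$. Working on $Y=\PP$, where $A$ is a rank-two $\OO_Y$-algebra and $M$ a rank-two bundle, I would run Riemann--Roch for the order $A$ to obtain $\chi_A(M,A)=\hom{A}{M}{A}-\ext{1}{A}{M}{A}+\ext{2}{A}{M}{A}$, then kill the top term by Serre duality for $A$ (the dual $\Hom$ being ruled out by slope considerations, $M$ and $A$ sitting on opposite sides of the relevant inequality), and show the answer is $\hom{A}{M}{A}=2$, constant in $M$. Constancy of the fibre dimension makes $\pi$ a $\P^{1}$-bundle: $\Hilb=\P(\mathcal E)$ for the rank-two sheaf $\mathcal E=\pi_{*}\HOM{A}{\mathcal M}{A}$ on ${\bf Pic}\;A$, where $\mathcal M$ is the universal line bundle. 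In particular $\Hilb$ is smooth once ${\bf Pic}\;A$ is, and it is a ruled surface over ${\bf Pic}\;A$.

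It therefore remains to show that ${\bf Pic}\;A$ is a smooth genus-two curve, which I expect to be the main obstacle. Here the forgetful map~\eqref{map} used by Chan and Kulkarni is unavailable, since $A$-line bundles are no longer $\mu$-semistable, so I would instead classify them directly through the covering data $(Y,L,\phi)$: pushing the classifying problem down to the ramification of $A$, the two conics $C_{1},C_{2}\subset Z=\P^{2}$, should realise ${\bf Pic}\;A$ as a double cover of a $\P^{1}$ branched at six points -- the four points of $C_{1}\cap C_{2}$ together with two further distinguished points coming from the involution $\sigma$ -- whence genus two by Riemann--Hurwitz, with smoothness following from the distinctness of the branch points. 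Verifying that the branch locus is exactly these six points, and that no two collide, is the delicate part of the argument.

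Finally I would analyse the universal quotient $\mathcal Q$ on $\Hilb$ to produce the $8:1$ cover. Since $c_{1}(Q)=\OO_Y(1,1)$, the support of each $Q$ is a $(1,1)$-curve on $Y$, necessarily $\sigma$-invariant, so it descends under $Y\to Z$ to a line in $Z$, i.e.\ a point of $\dualP\simeq\P^{2}$; this assignment is a morphism $\psi\colon\Hilb\to\dualP$. To compute its degree I would fix a general line $\ell$ and count the $A$-quotients with $c_{2}=2$ supported on the $(1,1)$-curve $\widetilde\ell$ lying over it, which reduces to counting cyclic quotient modules of the restricted algebra $A|_{\widetilde\ell}$ and which I expect to give $8$. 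The branch locus is located where this count degenerates, namely where $\ell$ meets the ramification of $A$ tangentially: running through the positions of $\ell$ relative to $C_{1}$ and $C_{2}$ should single out the two conics together with the four lines tangent to both, giving a branch divisor of degree $2+2+4=8$ and completing the proof. The two genuinely computational hurdles are the exact fibre count of $8$ and the precise determination of the branch divisor from this degeneration analysis.
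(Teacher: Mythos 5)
Your overall architecture (kernel--cokernel dictionary, $\hom{A}{M}{A}=2$ giving $\P^1$-fibres, the support map to $\dualP$) matches the paper's, but the step that actually carries the theorem --- that ${\bf Pic}\;A$ has genus $2$ --- is left as an unsubstantiated hope, and it is done by a route the paper does not take. You propose to exhibit ${\bf Pic}\;A$ directly as a double cover of $\P^1$ branched at six points, but you give no construction of a degree-two map to $\P^1$, no identification of the six branch points beyond a guess, and you yourself flag the verification as ``the delicate part.'' The paper goes the other way around: it first establishes that $\Psi\colon\Hilb\to\dualP$ is an $8:1$ cover by explicitly counting the $A$-module structures on $\OO_{C_p}$ (choices of $\bar D'\subset C_p\cap D$ with $\bar D'+\sigma^*\bar D'=\bar D$, each giving two structures $\pm m$), works out the ramification divisor $R=R_1+\cdots+R_6$ over $E^\vee$, $E'^\vee$ and the four bitangents, computes $(K_{\Hilb})^2=-8$ from $K_{\Hilb}=\Psi^*K_{\dualP}+R$, and only then reads off $g=2$ from the ruled-surface formula $(K_{\Hilb})^2=8(1-g)$. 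So the $8:1$ count and its degeneration analysis, which you defer to the end as a ``computational hurdle,'' are not an afterthought: they are the engine of the genus computation.

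Two further concrete problems. First, connectedness: you never address why $\Hilb$ (equivalently ${\bf Pic}\;A$) is connected, and without this ``genus $2$'' is not even well-posed; the paper obtains connectedness precisely by tracing which branches of $\Psi$ meet over the ramification locus, so this cannot be skipped. Relatedly, the paper proves smoothness of $\Hilb$ directly by killing the obstruction spaces $\Ext 1AMQ$ case by case, rather than deducing it from smoothness of the base as you do. Second, your justification ``since $M$ is stable'' for injectivity of nonzero maps $M\to A$ is false in this setting --- the whole point of departure from Chan--Kulkarni is that $A$-line bundles here are \emph{not} $\mu$-semistable; injectivity instead follows simply because $M$ and $A$ are torsion free and $k(A)$ is a division ring. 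Your Riemann--Roch-plus-Serre-duality plan for $\hom{A}{M}{A}=2$ is workable (the paper does essentially this in one of the two cases, after first pinning down the $\OO_Y$-module structure of $M$ via Theorem \ref{ystruc}), but as written the proposal does not contain a proof of the genus statement.
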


In their paper,
Chan and Kulkarni had a remarkably similar result concerning the moduli of line bundles with minimal $c_2$.
They also reduced the study of their moduli space of line bundles
with minimal second Chern class
to two possible first Chern classes. In the first case, the moduli space was a point and in the second case, also 
a genus two curve.
\newline
\\\noindent ACKNOWLEDGEMENTS: This paper is a summary of the author's PhD thesis. Consequently, the author would
like to thank his PhD supervisor Daniel Chan for all his help, patience and inspiration. The author
is also very grateful to Kenneth Chan and Hugo Bowne-Anderson  for all the helpful discussions.
\subsection{Outline of the rest of the paper}
We begin by briefly reviewing the relevant theory of orders on surfaces. 
After this, the rest of the paper is primarily devoted to making sense of, and
proving Theorem \ref{MainThm}. In Section \ref{mod}
we will define and study line bundles with minimal second Chern classes on the order $A$ from Construction \ref{MainConstruction}.
Afterwards, we will introduce the Hilbert scheme
of $A$, which parameterises left sided quotients of $A$. We will
compute its dimension and prove that it is smooth. It is here
that we will also  explore the bizarre covering of $\P^2$ that it exhibits and study its ramification.
In the last section, we will prove that the Hilbert scheme is in fact a ruled surface over
the moduli space. Finally using the map to $\P^2$ we will be able to compute the self 
intersection of the canonical divisor of the Hilbert scheme which will allow us to compute the 
genus of the moduli space.

\section{Preliminaries}
\subsection{Orders on surfaces}

We have already defined the notion of an order on a surface. We will now describe the aforementioned noncommutative
cyclic covering trick which we will later use to construct the order whose moduli spaces we will be studying.
This ``trick'' was introduced by Chan in \cite{ncyclic} and the reader is advised to look there, in 
particular Sections 2 and 3 for
all the relevant details and proofs.
\subsubsection{Noncommutative cyclic covering trick}\label{nctrick}
        The setup is as follows: Let $W$ be a normal integral Cohen-Macaulay scheme
        and $\sigma\in {\rm Aut}\;W$ with $\sigma^e={\rm id}$ for some minimal $e\in \Z^+$. Further,
        assume that $X:=W/\gen\sigma$ is a scheme.
        Given any $L\in \Pic W$, we can form the $\OO_W$-bimodule $L_\sigma$ such that $_{\OO_W}L_\sigma
        \simeq L $ and $(L_{\sigma})_{\OO_W}\simeq \sigma^*L$. Suppose we have an effective
        Cartier divisor  $D$ and an $L\in\Pic W$ such that there exists a non-zero
        map of $\OO_W$-bimodules $\phi\!:L_\sigma^{\otimes e}\stackrel \sim\to \OO_W(-D)\hookrightarrow \OO_W$ 
        satisfying the {\bf overlap condition}; namely that the two maps
        $1\otimes\phi$ and $\phi\otimes 1$ are equal on $L_\sigma\otimes_WL_\sigma^{{\otimes}(e-1)}
        \otimes_WL_\sigma$.
        
        Then 
        \[A=\OO_W\oplus L_\sigma\oplus\cdots\oplus L_\sigma^{\otimes (e-1)}\]
        is an order on $X$ with multiplication given by:
        \[L_\sigma^{i}\otimes_{W} L_\sigma^{j}\longrightarrow \left\{  
        \begin{array}[]{ll}
                L_\sigma^{\otimes(i+j)},&i+j<e\\
                L_\sigma^{\otimes(i+j)}\stackrel{1\otimes\phi\otimes 1}{\longrightarrow}L_\sigma^{\otimes\left( i+j-e \right)},&i+j\geq e
        \end{array}
        \right.\] which is independent of any choice that needs to be made when applying the map $1\otimes\phi\otimes 1$
        due to the overlap condition. Orders constructed in this manner are called {\bf cyclic orders}. We
        will almost always regard $A$ as an $\OO_W$-bimodule on $W$, in which case we pay special
        consideration to the fact that it is not $\OO_W$-central.

        Note that if we want to use this method to construct an order on a specific scheme $X$ we also need a way of finding a scheme
$W$ and an automorphism $\sigma\in {\rm Aut}\;W$ such that $W/\gen\sigma=X$. We can do so, using the classical cyclic covering
construction.
\begin{Construction}\label{cons}
       Let $X$ be a normal integral scheme, let $E\geq 0$ be an effective divisor and $N\in \Pic X$ such that
       $N^{\otimes e}\simeq \OO_X(-E)$. Then \[\pi\!:W:=Spec_X(\OO_X\oplus N\oplus\cdots\oplus N^{\otimes(e-1)})\to X\] is a cyclic cover of $X$.
       See Chapter 1, Section 17 of \cite{Barth} for more details. 
      Note that if $\sigma$ is the generator of $\Gal WX$ then $W/\gen\sigma=X$. To construct an
      order on $X$ using the noncommutative cyclic covering trick, let  $E'\geq 0$
      be another effective divisor on $X$ and let $D=\pi^* E'$. Find an $L\in \Pic W$ and a non-zero morphism 
      (if one exists) $\phi\!:L_\sigma^{\otimes e}\to\OO_W(-D)$ satisfying the overlap condition. Then as described
      above, we can construct an order on $X$ which we will denote by $A(W/X;\sigma,L,\phi)$. This
      order is ramified on $E\cup E'$, see \cite{ncyclic} Theorem 3.6 for a proof of this. We suppress 
      $E,E'$ and $D$ from the notation.
      \end{Construction}
      \subsection{The order we wish to study}\label{orderA}
In this section we will use the noncommutative cyclic covering trick to construct a del Pezzo order on $\P^2$ ramified
on a union of two conics. It is the moduli space and Hilbert scheme of this order that we will be investigating
for the remainder of this paper.

\begin{Construction}\label{MainConstruction}
        Let $Z=\P^2$ and $\pi\!:Y\to Z$ be a double cover ramified on a smooth conic $E\subset Z$ and let
        $\sigma$ be the covering involution. It is well known that $Y\simeq \PP$, $\Pic Y=\Z\oplus \Z$ 
        and that $\sigma^*(\OO_Y(n,m))=\OO_Y(m,n)$. Let $H$ be the inverse image of a general line in $Z$. It is a $(1,1)$-divisor
        and is ample.
        Let $E'\subset Z$ be a second smooth conic, intersecting $E$ in $4$ distinct point, let $D=\pi^*E'$ which is
        a smooth $(2,2)$-divisor, let $L=\OO_Y(-1,-1)\in \Pic Y$ and fix once and for all a morphism $\phi\!:L_\sigma^{\otimes 2}\stackrel{\sim}
        {\longrightarrow} \OO_Y(-D)\hookrightarrow \OO_Y$.  Any such $\phi$ satisfies the overlap condition
        and so $A:=A(Y/Z;\sigma,L,\phi)$ is a maximal
        order, in a division ring, on $Z$ ramified on $E\cup E'$. See \cite{Thesis} Chapter 1 for full proofs.        
\end{Construction} 
As mentioned previously, in \cite{main} the authors also consider a maximal order on $\P^2$ ramified, in their case, on a smooth quartic.
More importantly, the relation used in their construction was of the form $L_\sigma^{\otimes 2}\simeq \OO_W$. As we
shall see this small difference makes their techniques for the study of ${\bf Pic}\;A$, unusable in our case.

\subsection{The canonical bimodule}
To finish off the introduction we would like to explain in what sense our order $A$ is del Pezzo. 
We begin with the definition of the canonical bimodule which is the analogue of the canonical sheaf 
on a scheme. 

\begin{Def}\label{delPezzodef}
       Let $X$ be a normal integral scheme and $A$ an order on $X$.
        The canonical bimodule of $A$ is defined to be 
        \[\omega_A:=\HOM{\OO_X}{A}{\omega_X}.\] 
\end{Def} Mimicking the commutative definition,
        we say that $A$ is del Pezzo if $\omega_A^*:=\HOM A{\omega_X}A$ is ample.
For more details see \cite{bimod} Section 3.

If $X$ is Gorenstein, then $\omega_A=\HOM {\OO_X}A{\OO_X}\otimes_{X}\omega_X$. Using the reduced trace map, we can identify 
$\HOM {\OO_X}A{\OO_X}$ as an $A$-subbimodule of $k(A)$ and so $\w_A$ can be identified as an $A$-subbimodule of $k(A)\otimes_X\omega_X$.
The next theorem allows us to determine, in the case where $A$ is constructed using Construction \ref{cons}, precisely
what this subbimodule is. Knowledge of $\omega_A$ will be very valuable to us
in the future for various homological computations.

\begin{Thm}\label{canonical}
	
        Let $X$ be a normal integral Gorenstein scheme.
        Let $A:=A(W/X;\sigma,L,\phi)$ be an order on $X$ as described in Construction \ref{cons}
        and let $R\subset W$ be the reduced pullback of $E$ to $W$.

        Then 
        \begin{align*}
        \omega_A
        &=A\otimes_{W}L_\sigma\otimes_{W}\OO_W\left( (e-1)R+D \right)\otimes_{X}\omega_X\\
        &=A\otimes_{W}L_\sigma\otimes_{W}\OO_W(D)\otimes_{W} \omega_W
        \end{align*}
\end{Thm}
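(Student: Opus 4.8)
The plan is to reduce everything to a computation of the $\OO_X$-linear dual $A^\vee := \HOM{\OO_X}{A}{\OO_X}$, regarded as an $A$-bimodule, and then to twist by $\omega_X$. Since $X$ is Gorenstein we may write $\omega_A = A^\vee \otimes_X \omega_X$, so the whole content of the theorem is the identification
\[
A^\vee \simeq A \otimes_W L_\sigma \otimes_W \OO_W((e-1)R + D)
\]
of $A$-bimodules. Before computing $A^\vee$ I would dispose of the easy half of the statement, the equality of the two displayed forms of $\omega_A$: this is immediate from the ramification formula $\omega_W = \pi^*\omega_X \otimes_W \OO_W((e-1)R)$ for the tame, totally ramified cyclic cover $\pi\colon W \to X$, together with the identity $(-)\otimes_X \omega_X = (-)\otimes_W \pi^*\omega_X$ for $\OO_W$-modules. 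Substituting the former into $A\otimes_W L_\sigma \otimes_W \OO_W(D)\otimes_W\omega_W$ and cancelling the $\OO_W((e-1)R)$ factors turns the second expression into the first.

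For the main identification I would use the reduced trace exactly as indicated before the statement. The reduced-trace pairing $(a,b)\mapsto {\rm tr}_{\rm red}(ab)$ is nondegenerate on the central simple algebra $k(A)$, so it realises $A^\vee$ as the codifferent $\{x\in k(A): {\rm tr}_{\rm red}(xA)\subseteq \OO_X\}$. The crucial structural input is that, writing a general element of $k(A)$ in the cyclic form $\sum_{i=0}^{e-1} x_i u^i$ with $u$ a generator of $L_\sigma$ and $x_i \in k(W)$, the reduced trace annihilates every graded piece except the degree-zero one, on which it restricts to the field trace ${\rm tr}_{k(W)/k(X)}$. This forces $A^\vee$ to be homogeneous for the cyclic grading, $A^\vee = \bigoplus_{i} A^\vee_i u^i$ with $A^\vee_i\subseteq k(W)$, and reduces the problem to computing each $A^\vee_i$ separately.

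I would then compute $A^\vee_i$ degree by degree. Pairing $x_i u^i$ against a degree-$j$ element $\lambda u^j$ of $A$, only $j = e-i$ survives (so that the product lands in degree $e$, i.e. degree $0$ after applying $\phi$), and the integrality condition becomes ${\rm tr}_{k(W)/k(X)}\bigl(x_i\,\sigma^i(\lambda)\,\phi\bigr)\in\OO_X$ for all sections $\lambda$ of the invertible sheaf underlying $L_\sigma^{\otimes(e-i)}$. Since $\phi$ trivialises $L_\sigma^{\otimes e}$ onto $\OO_W(-D)$, this says precisely that $x_i$ lies in the codifferent of $W/X$, namely $\OO_W((e-1)R)$, twisted by the inverse of the line bundle through which $\sigma^i(\lambda)\phi$ ranges. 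Reassembling these pieces and tracking the $\sigma$-twists identifies the graded object with $A\otimes_W L_\sigma\otimes_W\OO_W((e-1)R+D)$; a useful sanity check is the degree-zero piece, where the $L_\sigma^{\otimes e}\simeq\OO_W(-D)$ summand of $A\otimes_W L_\sigma$ gets twisted by $\OO_W((e-1)R+D)$ back to $\OO_W((e-1)R)$, matching the computed $A^\vee_0$.

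The step I expect to be the real obstacle is not the computation of the individual graded pieces but the verification that the resulting isomorphism respects the full $A$-bimodule structure, rather than merely the underlying graded $\OO_W$-module. This is where the $\sigma$-semilinearity of the multiplication and the overlap condition on $\phi$ must be used carefully: one has to check that left and right multiplication by $A$ permute the summands $A^\vee_i u^i$ compatibly with the prescribed actions on $A\otimes_W L_\sigma\otimes_W\OO_W((e-1)R+D)$, and in particular that the single factor of $L_\sigma$ (the degree shift by one) and the twist by $D$ arise naturally from the appearance of $\phi$ in the pairing. Once the bimodule compatibility is secured, twisting by $\omega_X$ and invoking the ramification formula gives both displayed forms of $\omega_A$.
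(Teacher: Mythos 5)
Your proposal follows essentially the same route as the paper: both use the ramification formula $\omega_W=\pi^*\omega_X\otimes_W\OO_W((e-1)R)$, identify $\HOM{\OO_X}{A}{\OO_X}$ via the reduced trace as a codifferent inside $k(A)$, decompose it along the cyclic grading, and compute each graded piece as a twist of $\OO_W((e-1)R+D)$ before tensoring with $\omega_X$. Your degree-zero sanity check (recovering the codifferent $\OO_W((e-1)R)$ of $W/X$) and your attention to the bimodule compatibility are if anything more careful than the paper's own write-up, which treats these points tersely.
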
in $k(A)\otimes_W\w_W$.
\begin{proof}
               From Lemma 17.1 of
                \cite{Barth}  and the adjunction formula we know that \[\omega_W=\pi^*\omega_X\otimes_W
                \HOM{\OO_X}{\OO_W}{\OO_X}=\pi^*\omega_X\otimes_W\OO_W(\left( e-1 \right)R).\]
                Thus, using the reduced trace map we have:
\begin{align*}
	\HOM{\OO_X}A{\OO_X}&=\left\{ f\in k(A)\;|\;{\tr}(fA)\subseteq \OO_X \right\}\subseteq k(A)\\
	&=D_0\oplus D_1\oplus\cdots\oplus D_{e-1}
\end{align*}where
\[	D_{i-1}=\left\{ f\in L_\sigma^{\otimes(i-1)}\otimes_{W}k(W)\;|\;{\tr}(fL_\sigma)\subseteq 
	\OO_X \right\}=L_\sigma^{\otimes(i-1)}\otimes_{W}\OO_W\left( (e-1)R +D\right)\]
for $1\leq i\leq e$, and so: \[\HOM{\OO_X}A{\OO_X}=A\otimes_{W}L_\sigma\otimes_{W}\OO_W\left( (e-1)R+D \right).\]Thus:
\begin{align*}
	\omega_A&:=\HOM{\OO_X}A{\omega_X}\\
	&=A\otimes_{W}L_\sigma\otimes_{W}\OO_W\left( (e-1)R+D \right)\otimes_{W}\pi^*\omega_X\\
	&=A\otimes_{W}L_\sigma\otimes_{W}\OO_W(D)\otimes_{W} \omega_W.
\end{align*}
\end{proof}
Applying this theorem to our specific order $A$  we get:
\begin{Prop}
        Let $A:=A(Y/Z;\sigma,L,\phi)$ be as in Construction \ref{MainConstruction}. Then $\omega_A=A\otimes_Y\OO_Y(-H)$. In
        particular, $A$ is del Pezzo.
\end{Prop}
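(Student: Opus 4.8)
The plan is to specialize Theorem \ref{canonical} to the data of Construction \ref{MainConstruction} and simply read off the answer. Here $X=Z=\P^2$, $W=Y\simeq\PP$, the covering degree is $e=2$, $L=\OO_Y(-1,-1)$, and $D=\pi^*E'$ is the smooth $(2,2)$-divisor. The theorem gives
\[\omega_A=A\otimes_Y L_\sigma\otimes_Y\OO_Y(D)\otimes_Y\omega_Y,\]
so the whole computation reduces to evaluating the commutative line bundle $L_\sigma\otimes_Y\OO_Y(D)\otimes_Y\omega_Y$ on $Y$ as an ordinary element of $\Pic Y=\Z\oplus\Z$. The one subtlety is that $L_\sigma$ is a bimodule rather than an honest line bundle; but after tensoring with $A$ on the left, the relevant invariant is its underlying left $\OO_Y$-module, which is $L=\OO_Y(-1,-1)$, so for the purpose of identifying $\omega_A$ as $A\otimes_Y(\text{line bundle})$ I may treat $L_\sigma$ as $\OO_Y(-1,-1)$.

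Next I would assemble the three remaining factors. The canonical bundle of $Y=\PP$ is $\omega_Y=\OO_Y(-2,-2)$. The divisor $D=\pi^*E'$ is the pullback of a conic, hence a $(2,2)$-divisor, so $\OO_Y(D)=\OO_Y(2,2)$. Combining,
\[L\otimes_Y\OO_Y(D)\otimes_Y\omega_Y=\OO_Y(-1,-1)\otimes_Y\OO_Y(2,2)\otimes_Y\OO_Y(-2,-2)=\OO_Y(-1,-1).\]
Since $H$ is the inverse image of a general line in $Z$ and was noted to be a $(1,1)$-divisor, we have $\OO_Y(-1,-1)=\OO_Y(-H)$, and therefore $\omega_A=A\otimes_Y\OO_Y(-H)$, which is the first claim.

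For the del Pezzo assertion I must show $\omega_A^*$ is ample. Unwinding the definition $\omega_A^*=\HOM A{\omega_X}A$, and using that $\omega_A=A\otimes_Y\OO_Y(-H)$ together with the fact that tensoring by the invertible $\OO_Y$-module $\OO_Y(-H)$ is an equivalence commuting with the $A$-action, the dual should work out to $\omega_A^*=A\otimes_Y\OO_Y(H)$. Ampleness of this bimodule then follows from the ampleness of $H$ on $Y$ recorded in Construction \ref{MainConstruction}: positive powers of $\omega_A^*$ are built from $A\otimes_Y\OO_Y(nH)$, and the high vanishing and global generation required for ampleness of the bimodule reduce, via the finite $\OO_Y$-module structure of $A$, to the corresponding statements for $\OO_Y(nH)$ on the projective surface $Y$.

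The step I expect to be the main obstacle is the last one, namely pinning down the dual $\omega_A^*$ precisely and verifying ampleness in the sense appropriate for bimodules; the first identification is a direct substitution into Theorem \ref{canonical}, but the notion of ampleness for the bimodule $\omega_A^*$ must be handled with the care that the noncommutative setting demands, and one should cite the framework of \cite{bimod} Section 3 rather than re-derive it. Once the reduction to ampleness of $H$ on $Y$ is made explicit, the conclusion is immediate since $H$ is ample.
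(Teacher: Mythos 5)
Your proposal is correct and follows the paper's own (very terse) proof exactly: substitute the data of Construction \ref{MainConstruction} into Theorem \ref{canonical} and use $\omega_Y=\OO_Y(-2,-2)$, with the identification of $L_\sigma$ with its underlying left module being harmless here precisely because $c_1(L)=\OO_Y(-1,-1)$ is $\sigma$-invariant, as you note. You in fact supply more detail than the paper on the del Pezzo assertion, which the paper leaves entirely to the framework of \cite{bimod}; your reduction of ampleness of $\omega_A^*=A\otimes_Y\OO_Y(H)$ to ampleness of $H$ on $Y$ is the intended argument.
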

\begin{proof}        
We simply apply Theorem \ref{canonical} and use the well known fact that $\omega_Y=\OO_Y(-2,-2).$
\end{proof}

From now on, unless explicitly stated otherwise, $A$ denotes $A(Y/Z;\sigma,L,\phi)$ -- the order constructed in 
Construction \ref{MainConstruction}.

\section{The Moduli Space of $A$-Line Bundles}\label{mod}
In this section we will study line bundles on $A$.
\begin{Def}\label{LineBundle}
        Let $X$ be a normal integral scheme and $B$ an order in a division ring $k(B)$ on $X$.
        Let $M$ be a sheaf of left $B$-modules. We say $M$ is a {\bf line bundle} on $B$ if $M$ is locally projective
        as a $B$-module and ${\rm dim}_{k(B)}k(B)\otimes _B M=1$. The set (not group) of isomorphism classes of
        $B$-line bundles will be denoted by $\Pic B$.
\end{Def} 

The following proposition gives a very useful criterion for checking whether an $A$-module is in fact an
$A$-line bundle.

\begin{Prop}\label{PropLocFree}
        $M\in \Pic A$ if and only if $M$ is an $A$-module such that $_YM$ is a rank two locally free sheaf on $Y$.
\end{Prop}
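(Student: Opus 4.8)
The plan is to prove both implications by reducing the question to a purely $\OO_Y$-local statement, exploiting the fact (emphasized in the introduction) that every $A$-module is naturally an $\OO_Y$-module, and that $A$ itself is $\OO_W$-free of rank $e=2$, so that $A \cong \OO_Y \oplus L_\sigma$ as an $\OO_Y$-module, i.e. a rank two locally free sheaf on $Y$. The two conditions in Definition \ref{LineBundle} — local projectivity over $A$, and $\dim_{k(A)} k(A)\otimes_A M = 1$ — need to be matched against the single condition that $_YM$ is locally free of rank two.

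For the forward direction, suppose $M \in \Pic A$. Since $A$ is maximal and $M$ is locally projective over $A$, I would first argue that $M$ is locally a direct summand of a free $A$-module $A^{\oplus n}$; restricting scalars to $\OO_Y$ and using that $_YA$ is locally free, each such summand is $\OO_Y$-locally free, hence $_YM$ is locally free on $Y$ (a local direct summand of a locally free sheaf is locally free, as $\OO_Y$ is noetherian). The remaining point is to pin down the rank. The generic rank is computed from $\dim_{k(A)} k(A)\otimes_A M = 1$: tensoring with $k(Y)$ and using that $k(A)$ is a central simple $k(Z)$-algebra of degree $2$, so that $k(A)$ has dimension $4$ over $k(Z)$ and dimension $2$ over $k(Y)$, a rank one $k(A)$-module has $k(Y)$-dimension $2$. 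Since $M$ is torsion free (being locally projective, hence a submodule of a free module, over the torsion free $A$) its $\OO_Y$-rank equals its generic rank, which is $2$. Thus $_YM$ is rank two locally free.

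For the converse, suppose $M$ is an $A$-module with $_YM$ rank two locally free. The generic statement is immediate: $k(A)\otimes_A M$ has $k(Y)$-dimension equal to the generic rank of $_YM$, namely $2$, and since $\dim_{k(Y)} k(A) = 2$ this forces $\dim_{k(A)} k(A)\otimes_A M = 1$ (here I use that $k(A)$ is a division ring, so that a finite-dimensional $k(A)$-module is free and its $k(A)$-dimension is its $k(Y)$-dimension divided by two). It remains to show local projectivity over $A$. \textbf{This is the step I expect to be the main obstacle.} The idea is to check projectivity at each point, where it becomes a question about modules over the local ring $A_p$ of a maximal order; the cleanest route is to show $M$ is a maximal Cohen--Macaulay (reflexive) $A$-module and invoke that over a maximal order on a regular (hence smooth) surface, reflexive equals locally projective. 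Concretely, since $_YM$ is locally free it is $\OO_Y$-reflexive, and reflexivity is insensitive to the ring over which one computes duals for a module that is already finite over a smaller regular ring; combined with maximality of $A$ this yields local projectivity over $A$.

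I would organize the argument as two lemmas if the reflexive$\,\Rightarrow\,$projective input is not already available: one establishing the equivalence of generic ranks as above, and one handling the local projectivity via the maximal order structure. The crucial external facts are the structure of $A$ as a rank two $\OO_Y$-bundle, the degree of $k(A)$ over its centre, and the homological regularity of maximal orders on smooth surfaces (for which \cite{artinjong} or \cite{Lectures} should suffice); everything else is bookkeeping with ranks under the faithfully flat passage to generic points.
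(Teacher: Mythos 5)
Your overall architecture --- forward direction by restriction of scalars plus a generic rank count, converse by a generic rank count plus a depth/reflexivity argument for local projectivity --- is essentially the paper's, whose proof is a one-line reduction to the fact that $A$ is locally of global dimension $2$. The forward direction and the rank bookkeeping via $\dim_{k(Z)}k(A)=4$ and $\dim_{k(Y)}k(A)=2$ are correct.

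The gap is in the step you yourself flag as the main obstacle. You justify ``reflexive $\Rightarrow$ locally projective'' by appealing to ``the homological regularity of maximal orders on smooth surfaces,'' i.e.\ to maximality of $A$ together with regularity of the centre. That general principle is false. For an order over a two-dimensional regular local ring, ``every reflexive module is projective'' is \emph{equivalent} to the order having global dimension $2$ (one direction by the Auslander--Buchsbaum-type formula $\mathrm{pd}_A M+\mathrm{depth}\,M=2$, the other by applying the hypothesis to second syzygies), and maximal orders on smooth surfaces need not have finite global dimension --- this can already fail when the ramification divisor is not normal crossing. Maximality buys you reflexivity of $A$ and hereditariness in codimension one, but not the homological statement you need. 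The correct input, and the one the paper actually cites, is that \emph{this particular} $A$ is locally of global dimension $2$; that follows from the explicit cyclic-cover construction and the fact that the ramification locus $E\cup E'$ is a normal crossing divisor (two smooth conics meeting transversally in four points), not from maximality alone. Once you have $\mathrm{gl.dim}\,A_p=2$ for all $p$, your argument closes exactly as you intend: $_YM$ locally free implies $M$ is maximal Cohen--Macaulay over $\OO_Z$ (depth is preserved under the finite map $Y\to Z$), and then $\mathrm{pd}_{A_p}M_p=0$. So replace the appeal to maximality by an appeal to, or a proof of, the local global-dimension-two statement.
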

\begin{proof}
        Follows from the fact that, $A$ is locally of global dimension $2$. See Proposition 2.02 in \cite{Thesis}
        for a full proof.
\end{proof}
\begin{Eg}\label{AtensorN}
        Suppose $N\in \Pic Y$. Then $\A N$ is an $A$-line bundle since it is clearly
        an $A$-module and is locally free of rank two over $Y$.
\end{Eg}

\subsection{Chern classes of $A$-line bundles}\label{Chernclasses}
In this section we study the possible Chern classes of line bundles on $A$.
Recall that whenever we speak of Chern classes for any $M\in \Pic A$ we imply that
we are talking about the $\OO_Y$-module $_YM$.

The first natural question to ask about any $A$-line bundle is what could be its 
first Chern class. We answer this in the following proposition.
As it turns out, the possibilities are fairly limited.
\begin{Prop}
        Let $M\in \Pic A$. Then $c_1(M)=\OO_Y(n,n)$ for some $n\in \Z$. Conversely, given any such $n$,
        $\A\OO_Y(0,n+1)\in\Pic A$ with $c_1=\OO_Y(n,n)$.
\end{Prop}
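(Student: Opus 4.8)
The plan is to dispatch the converse first, since it is a direct computation. By Example \ref{AtensorN} the module $\A\OO_Y(0,n+1)$ is automatically an $A$-line bundle, so it remains only to compute its first Chern class. Writing $A=\OO_Y\oplus L_\sigma$ as an $\OO_Y$-bimodule and using the bimodule identity $L_\sigma\otimes_Y N\cong L\otimes_Y\sigma^*N$ for the right tensor product, one finds that as a left $\OO_Y$-module $\A N\cong N\oplus(L\otimes_Y\sigma^*N)$. Substituting $N=\OO_Y(0,n+1)$, so that $\sigma^*N=\OO_Y(n+1,0)$ and $L\otimes_Y\sigma^*N=\OO_Y(n,-1)$, gives $_Y(\A\OO_Y(0,n+1))\cong\OO_Y(0,n+1)\oplus\OO_Y(n,-1)$, whose determinant is $\OO_Y(n,n)$, as claimed.

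For the forward direction, the plan is to extract information from the left action of the $L_\sigma$-summand of $A$ on $M$. This action is a morphism $\mu\colon L\otimes_Y\sigma^*M\to M$ of rank-two bundles, where I again use $L_\sigma\otimes_Y M\cong L\otimes_Y\sigma^*M$ together with the fact that $\sigma^*L=L$, as $L=\OO_Y(-1,-1)$ is $\sigma$-symmetric. I would first observe that $\mu$ is generically an isomorphism: since $k(A)$ is a division ring, left multiplication by the $L_\sigma$-generator is invertible at the generic point. Hence $t:=\det\mu$ is a nonzero section, and writing $c_1(M)=\OO_Y(a,b)$, a determinant computation places $t\in H^0(\OO_Y(a-b+2,\,b-a+2))$, so $\mathrm{div}(t)$ is effective of this type.

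The crucial step is to bring in the defining relation $\phi\colon L_\sigma^{\otimes2}\xrightarrow{\sim}\OO_Y(-D)\hookrightarrow\OO_Y$. Associativity of the $A$-action forces the twofold composite $\mu\circ(\mathrm{id}_L\otimes\sigma^*\mu)$ to equal multiplication by the defining section $s_D$ of $D$ on $M$; here I use that $D$ is a $(2,2)$-divisor, so $L^{\otimes2}=\OO_Y(-2,-2)=\OO_Y(-D)$. Taking determinants and noting that $\det(\mathrm{id}_L\otimes\sigma^*\mu)=\sigma^*t$ (the twist by $L$ cancelling in the determinant) yields $t\cdot\sigma^*t=s_D^2$ up to a scalar, hence $\mathrm{div}(t)+\sigma\bigl(\mathrm{div}(t)\bigr)=2D$ as divisors. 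Since $D=\pi^*E'$ is irreducible — being a double cover of $E'\simeq\P^1$ branched at the four points $E\cap E'$, hence a connected genus-one curve — every effective divisor bounded by $2D$ is a multiple $c[D]$. Thus $\mathrm{div}(t)=c[D]$ has symmetric type $(2c,2c)$, which forces $a-b+2=b-a+2$, i.e. $a=b$; setting $n=a=b$ finishes the argument.

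The main obstacle I anticipate is pinning down the relation $t\cdot\sigma^*t=s_D^2$ with the correct bimodule bookkeeping: one must carefully track the identifications $L_\sigma\otimes_Y(-)\cong L\otimes_Y\sigma^*(-)$ through the associativity constraint and verify that the composite of the two $\mu$-multiplications is genuinely scalar multiplication by $s_D$ rather than some twisted variant. The irreducibility of $D$ is the other point needing care, but it follows cleanly from the branched-cover description above.
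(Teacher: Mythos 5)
Your proof is correct, but the forward direction takes a genuinely different route from the paper's. The converse is identical: invoke Example \ref{AtensorN} and read off $c_1$ from the left $\OO_Y$-module decomposition $\OO_Y(0,n+1)\oplus\OO_Y(n,-1)$. For the forward direction the paper works with the cokernel of the multiplication map rather than its determinant: from the chain of $\OO_Y$-submodules $L_\sigma^{\otimes 2}\otimes_Y M < L_\sigma\otimes_Y M < M$ it extracts the exact sequence $0\to L_\sigma\otimes_Y Q\to M|_D\to Q\to 0$ with $Q=M/(L_\sigma\otimes_Y M)$, concludes from local freeness of $M|_D$ that $Q$ is a line bundle on $D$ itself (so $c_1(Q)=D$ on the nose), and then reads $c_1(M)=c_1(L_\sigma\otimes_Y M)+D=\sigma^*c_1(M)-D+D$ off the sequence $0\to L_\sigma\otimes_Y M\to M\to Q\to 0$; no property of $D$ beyond its class enters. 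Your argument instead locates $\mathrm{div}(\det\mu)$ via the relation $\mathrm{div}(t)+\sigma(\mathrm{div}(t))=2D$ forced by associativity, and then needs the irreducibility of $D$ (the connected genus-one double cover of $E'$) to conclude that $\mathrm{div}(t)$ is a multiple of $D$ and hence of symmetric type. That is a genuine extra geometric input which happens to hold in this construction but which the paper's proof does not require; on the other hand your version makes the role of the defining relation $\phi$ completely explicit, and the bookkeeping you flag as the main risk does check out: $\det(\mathrm{id}_L\otimes\sigma^*\mu)$ cuts the same divisor as $\sigma^*t$, and the twofold composite really is multiplication by $s_D$ once $L_\sigma^{\otimes 2}$ is identified with $\OO_Y(-D)$ via $\phi$. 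Both proofs are valid; the paper's is shorter and more robust, yours is a legitimate determinant-theoretic alternative.
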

\begin{proof}
        	First note that we have a chain of $\OO_Y$-submodules 
                $M(-D):=L_\sigma^{\otimes 2}\otimes_Y M<L_\sigma\otimes_Y M<M$ which means
	\[0\to\frac{L_\sigma\otimes_YM}{L_\sigma^{\otimes 2}\otimes_YM}\to\frac{M}{M(-D)}\to\frac{M}{L_\sigma\otimes_YM}\to 0\]
	is an exact sequence. Let $Q=M/(L_\sigma\otimes_YM)$. The above then becomes:
	\[0\to L_\sigma\otimes_Y Q\to M|_D\to Q\to 0.\] Now $M|_D$ is a locally free sheaf on $D$ of rank 2, and so $L_\sigma\otimes
	_YQ$ and hence $Q$ must be line bundles on $D$. 
        Consequently, $c_1(Q)=D$ and so
        $c_1(M)=c_1(L_\sigma\otimes M)+D$. Hence $c_1(M)=\sigma^{*}c_1(M)$ and the result follows.

        To see the converse, first note that by Example \ref{AtensorN} we know that $M:=\A\OO_Y(0,n+1)$ is indeed an $A$-line bundle.
        Furthermore, $c_1(M)=c_1(\OO_Y(0,n+1)\oplus \OO_Y(n,-1))=\OO_Y(n,n)$.
\end{proof}

Having classified all the possible first Chern classes of $A$-line bundles, we move on to
see what can be said about the second Chern class. As we shall see, the second Chern class
has a strict lower bound analogous to Bogomolov's inequality, which we now recall.

Let $X$ be a smooth projective surface and $\sheafF$ a torsion free coherent sheaf on $X$ with
Chern classes $c_1$, $c_2$ and rank $r$. Fix an ample divisor $H$ on $X$. The {\bf gradient} of $\sheafF$ is defined to be
\[\mu(\sheafF):=\frac{c_1.H}{r}.\] $\sheafF$ is said to be $\mu$-semistable if for any subsheaf $\sheafF'\subset\sheafF$ we have
$\mu(\sheafF')\leq \mu(\sheafF)$. Bogomolov's inequality (Theorem 12.1.1 in \cite{Potier}) states, 
that if $\sheafF$ is semistable then \[
\Delta(\sheafF):=4c_2-c_1^2\geq 0.\] Thus, if
considering any class of semistable sheaves on $X$ with a fixed first Chern class, the second Chern class is bounded from below. 

In \cite{main} the authors were able to show to that for their cyclic order $A$, any $A$-line
bundle was automatically $\mu$-semistable as sheaf on $Y$ and 
could thus bound the second Chern class using Bogomolov's inequality. 

We modify their proof and achieve a slightly weaker result for
our order.

\begin{Prop}\label{MyBogomolov}
        Let $M\in \Pic A$  and let $N<M$ be an $\OO_Y$-subsheaf. Then
        \[\mu(N)\leq \mu(M)+1\tag{1}.\label{ahs}\] 
\end{Prop}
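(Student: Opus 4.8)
The plan is to adapt the Chan--Kulkarni argument for $\mu$-semistability, but to carry along the extra $(2,2)$-twist that appears because $\phi$ is no longer an isomorphism; this twist is precisely what weakens their conclusion $\mu(N)\le\mu(M)$ to our $\mu(N)\le\mu(M)+1$. First I would reduce to the only essential case. Passing from $N$ to its saturation $\bar N$ in $M$ can only increase the slope, since $\bar N/N$ is torsion and hence $c_1(\bar N)-c_1(N)$ is effective, giving $\mu(\bar N)\ge\mu(N)$; so it suffices to bound $\mu$ for saturated subsheaves, for which $M/\bar N$ is torsion free. Because $_YM$ is locally free of rank $2$, a proper nonzero saturated subsheaf has rank $1$ with $M/\bar N$ a rank-$1$ torsion-free sheaf, while the rank-$2$ case forces $\bar N=M$ and the bound is trivial. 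Thus I may assume $0<N<M$ with $N$ rank $1$ saturated and $M/N$ rank $1$ torsion free.

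The key device is the left $A$-action map $L_\sigma\otimes_Y M\to M$. Composing the inclusion $L_\sigma\otimes_Y N\hookrightarrow L_\sigma\otimes_Y M$ with this action and the projection $M\to M/N$ produces a morphism $f\colon L_\sigma\otimes_Y N\to M/N$, and the heart of the proof is to show $f\ne 0$. If $f=0$ then $L_\sigma\cdot N\subseteq N$; since $\OO_Y\cdot N=N$ trivially and $A=\OO_Y\oplus L_\sigma$, this would make $N$ a left $A$-submodule of $M$. But $M\in\Pic A$, so $k(A)\otimes_A M$ is a one-dimensional, hence simple, left $k(A)$-module; a nonzero $A$-submodule $N$ satisfies $k(A)\otimes_A N\ne 0$ (as $N$ is torsion free of positive rank), so by simplicity $k(A)\otimes_A N=k(A)\otimes_A M$, and exactness of localization at the generic point forces $M/N$ to be $\OO_Y$-torsion, contradicting that $M/N$ is torsion free of rank $1$. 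This step, using the line-bundle (simplicity) property to rule out $A$-stability of $N$, is the one genuinely structural point and the main obstacle; everything after it is a Chern-class bookkeeping exercise.

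Granting $f\ne 0$, it is a nonzero map of rank-$1$ torsion-free sheaves with torsion-free target, so it is injective with torsion cokernel; hence $c_1(M/N)-c_1(L_\sigma\otimes_Y N)$ is effective and $\mu(L_\sigma\otimes_Y N)\le\mu(M/N)$. I then compute both slopes. Using $L_\sigma\otimes_Y N\simeq L\otimes_Y\sigma^*N$ (equivalently the Chern-class identity $c_1(L_\sigma\otimes_Y N)=c_1(L)+\sigma^*c_1(N)$ already implicit in the classification of $c_1(M)$), together with $c_1(L)=\OO_Y(-1,-1)$, the $\sigma$-invariance $\sigma^*H=H$, and $\sigma^*c_1(N).H=c_1(N).H$, I obtain $\mu(L_\sigma\otimes_Y N)=\mu(N)-2$. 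On the other side $\mu(M/N)=c_1(M).H-c_1(N).H=2\mu(M)-\mu(N)$. Substituting into $\mu(L_\sigma\otimes_Y N)\le\mu(M/N)$ yields $\mu(N)-2\le 2\mu(M)-\mu(N)$, that is $2\mu(N)\le 2\mu(M)+2$, and therefore $\mu(N)\le\mu(M)+1$, which is exactly inequality \eqref{ahs}.
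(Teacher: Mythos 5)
Your proof is correct and is essentially the paper's own argument: both hinge on the structural fact that the $L_\sigma$-action cannot preserve a rank-one subsheaf $N$ (because $k(A)\otimes_A M$ is a simple module over the division ring $k(A)$), combined with the computation $c_1(L_\sigma\otimes_Y N).H = \sigma^*c_1(N).H + c_1(L).H = c_1(N).H-2$. The only difference is bookkeeping: the paper compares the rank-two sheaves $A\otimes_Y N = N\oplus L_\sigma\otimes_Y N \hookrightarrow M$ directly, whereas you compare the rank-one sheaves $L_\sigma\otimes_Y N$ and $M/N$ after saturating; the two routes give the identical inequality, and yours has the minor merit of spelling out the injectivity that the paper leaves implicit.
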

\begin{proof} Note that $M$ is locally free of rank $2$ over $Y$. Thus the result is 
               clear if rank $N$= 2 and so we assume rank $N=1$.
        Observe that $c_1(L_\sigma\otimes N).H=c_1(L).H+\sigma^*c_1(N).H=
	-2+c_1(N).H$. Now
	\begin{align*}
		\mu(N\oplus L_\sigma\otimes_YN)&=\frac{c_1(N).H+c_1(L_\sigma\otimes_YN).H}{2}\\
		&=\frac{2c_1(N).H-2}{2}\\
		&=c_1(N).H-1\\
		&=\mu(N)-1
	\end{align*} and so $\mu(N)=\mu(N\oplus L_\sigma\otimes_YN)+1\leq \mu(M)+1$.
\end{proof}
It is easy to see that this inequality is tight. For example the $A$-line bundle $A$ has gradient
$\mu(A)=-1$ and an $\OO_Y$-submodule $\OO_Y<A$ with $\mu(\OO_Y)=0$. Thus $A$-line bundles are in general not
$\mu$-semistable and so we can not apply Bogomolov's inequality to give a lower bound for the second Chern class.
Furthermore, as mentioned earlier, this implies we can not use the map (\textasteriskcentered) from
page \pageref{map} in order to study the moduli space of line bundles, simply because this map does not
exist for us.

Luckily, due to a deep theorem by Langer in \cite{Langer} the result of Proposition \ref{MyBogomolov} is good enough
to achieve a lower bound on $c_2$.
\begin{Prop}\label{PropDelta}
        Let $M\in \Pic A$ with Chern classes $c_1$ and $c_2$. Then
        \[\Delta(M)=4c_2-c_1^2\geq -2.\]
\end{Prop}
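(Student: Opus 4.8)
The plan is to run the Harder--Narasimhan machinery on the rank two sheaf $_YM$ and to feed the slope estimate of Proposition \ref{MyBogomolov} into a Bogomolov-type inequality valid for \emph{possibly non-semistable} sheaves, the latter being exactly what Langer's theorem supplies. First I would dispose of the easy case: if $_YM$ is $\mu$-semistable, then the classical Bogomolov inequality recalled above already gives $\Delta(M)\ge 0>-2$. So from now on I assume $_YM$ is not $\mu$-semistable.

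In that case the maximal destabilising subsheaf of $_YM$ is a saturated rank one subsheaf; being saturated in a locally free sheaf on the smooth surface $Y$ it is a line bundle $L_1$, and the quotient $_YM/L_1$ is torsion free of rank one, hence of the form $L_2\otimes_Y I_{\mathcal Z}$ for a line bundle $L_2$ and a zero dimensional subscheme $\mathcal Z$ of length $\ell\ge 0$. From the resulting sequence $0\to L_1\to{}_YM\to L_2\otimes_Y I_{\mathcal Z}\to 0$ I would read off $c_1(M)=c_1(L_1)+c_1(L_2)$ and $c_2(M)=c_1(L_1).c_1(L_2)+\ell$, so that, writing $\xi:=c_1(L_1)-c_1(L_2)$, a one-line computation gives $\Delta(M)=4\ell-\xi^2$.

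It then remains only to bound $\xi^2$ from above. Because $L_1$ is destabilising we have $\mu(L_1)\ge\mu(M)$, i.e.\ $\xi.H\ge 0$; and Proposition \ref{MyBogomolov} applied to $L_1<M$ gives $\mu(L_1)\le\mu(M)+1$, which rearranges to $\xi.H=\mu(L_1)-\mu(L_2)\le 2$. Hence $0\le\xi.H\le 2$. The Hodge index theorem for the ample class $H$ then gives $\xi^2\le(\xi.H)^2/H^2$, and since $H=\OO_Y(1,1)$ satisfies $H^2=2$ we conclude $\xi^2\le 4/2=2$. Therefore $\Delta(M)=4\ell-\xi^2\ge -2$.

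I expect the conceptual heart of the argument to be the invocation of Langer's theorem: it is precisely the substitute for the semistability hypothesis we lack, converting the uniform slope control of Proposition \ref{MyBogomolov} into a genuine lower bound for the discriminant. The remaining delicate points are essentially bookkeeping: taking the destabilising subsheaf saturated so that $_YM/L_1$ is torsion free and the identity $\Delta(M)=4\ell-\xi^2$ holds on the nose, and checking that the constant $H^2=2$ together with the ``$+1$'' of Proposition \ref{MyBogomolov} land the estimate exactly at $-2$. The bound is sharp: it is attained with $\ell=0$ and $\xi=H$, for instance by $M=A$ itself, for which $_YA\cong\OO_Y\oplus\OO_Y(-1,-1)$ has $\Delta=-2$.
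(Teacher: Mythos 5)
Your proof is correct, but it does not follow the paper's route. The paper's proof is a one-line appeal to Theorem 5.1 of \cite{Langer} --- a Bogomolov-type inequality valid for arbitrary torsion-free sheaves, in which the failure of semistability is penalised by the quantities $\mu_{\max}-\mu$ and $\mu-\mu_{\min}$ --- applied with $D_1=H$ and fed the slope bound of Proposition \ref{MyBogomolov}. What you have actually written is the self-contained elementary argument alluded to in the Remark immediately following the proposition: dispose of the semistable case by classical Bogomolov; otherwise extract the saturated (hence invertible, since a saturated rank-one subsheaf of a locally free sheaf on a smooth surface is a line bundle) destabilising subsheaf $L_1$ with torsion-free quotient $L_2\otimes_Y I_{\mathcal Z}$; verify the identity $\Delta(M)=4\ell-\xi^2$ with $\xi=c_1(L_1)-c_1(L_2)$; pin down $0\le\xi.H\le 2$ using the destabilising property on one side and Proposition \ref{MyBogomolov} on the other; and close with the Hodge index theorem and $H^2=2$. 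Every step checks out, and your sharpness example $M=A$ (where $\xi=H$, $\ell=0$) is the right one. The only infelicity is your framing: despite the opening and closing sentences, your argument never invokes Langer's theorem --- it \emph{reproves} the rank-two case of it from scratch. The trade-off between the two approaches is the usual one: the paper's citation is shorter and draws on a statement valid in any rank (and positive characteristic), while your version is self-contained and makes completely transparent why the bound lands exactly at $-H^2=-2$ and when it is attained.
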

\begin{proof}
        Follows immediately from Theorem 5.1 of \cite{Langer} with $D_1=H$ and Proposition \ref{MyBogomolov}.
\end{proof}
\begin{Remark}
        The above theorem can also be proven using rather elementary techniques, without needing
        the generality of \cite{Langer}.
\end{Remark}

In Section \ref{Chernproof} we will prove that $4c_2-c_1^2\geq -2$ is in fact a sufficient condition
to guarantee that there exists am $A$-line bundle with these Chern classes.

Having shown that for a fixed first Chern class, the second Chern class of any $A$-line bundle is bounded from
below, we begin studying those line bundles, with minimal second second Chern class. In particular,
we would like to determine what the moduli space of such bundles is. 

The existence of a projective coarse moduli scheme parametrising $A$-line bundles with minimal $c_2$ follows
easily from Theorem 2.4 in \cite{Hoffmann} and Proposition \ref{PropDelta}. In fact
this moduli space is smooth because $A$ is del Pezzo.
For a full explanation and proof, see \cite{Thesis} Chapter 2.
\begin{Remark}\label{RemarkOnC}
        Since the functor $\OO_Y(nH)\otimes_Y-$ is a category autoequivalence of $A$-Mod, it induces an automorphism of
        the moduli space of $A$. Note that for any $M\in \Pic A$, $c_1(\OO_Y(nH)\otimes_YM)=2nH+c_1(M)$. Since by the
        previous proposition, $c_1(M)=mH$ for some $m\in \Z$ we me may assume that $c_1(M)=\OO_Y(-1,-1)$ or $c_1(M)=
        \OO_Y(-2,-2)$. 
\end{Remark}

Before we begin our analysis of $A$-line bundles with minimal $c_2$, we need
to examine the inequality (\ref{ahs}) we met in Proposition \ref{MyBogomolov} a little further.
\begin{Def}
        Let $X$ be a  surface and $V$ a vector bundle on $X$. 
        We say $V$ is {\bf almost semistable} if for any subbundle $V'\subset V$,
        $\mu(V')\leq \mu(V)+1$.
\end{Def}
\begin{Prop}\label{ahsprop}
        Let $X$ be a surface and $V$ a vector bundle on $X$.
        \begin{enumerate}
                \item $V$ is almost semistable if and only if $V\otimes_{X} N$ is
                        almost semistable for 
                        all $ N\in \Pic X$.
                \item If $V$ is rank $2$ and almost semistable, then so is $V^*$.
        \end{enumerate}
\end{Prop}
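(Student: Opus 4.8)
The plan is to prove (1) by a single gradient computation and then deduce (2) from (1) via the rank-two self-duality isomorphism.

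For (1), the key observation is how $\mu$ transforms under tensoring by a line bundle. First I would record that for any vector bundle $W$ of rank $r$ and any $N\in\Pic X$ one has $c_1(W\otimes_X N)=c_1(W)+r\,c_1(N)$, whence $\mu(W\otimes_X N)=\mu(W)+c_1(N).H$. The essential point is that this shift $c_1(N).H$ is independent of the rank $r$. Next I would note that $V'\mapsto V'\otimes_X N$ is a bijection between subbundles of $V$ and subbundles of $V\otimes_X N$, with inverse $-\otimes_X N^{-1}$. Combining these two facts, for every subbundle $V'\subset V$ the difference $\mu(V')-\mu(V)$ equals $\mu(V'\otimes_X N)-\mu(V\otimes_X N)$, so the defining inequality $\mu(V')\leq\mu(V)+1$ holds for all $V'$ if and only if the corresponding inequality holds for all subbundles of $V\otimes_X N$. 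This already shows that $V$ is almost semistable if and only if $V\otimes_X N$ is, for each fixed $N$; the stated equivalence then follows, the reverse direction being recovered by specialising to $N=\OO_X$.

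For (2), I would use that a rank-two bundle satisfies $V^*\simeq V\otimes_X(\det V)^{-1}$, the isomorphism arising from the perfect wedge pairing $V\otimes_X V\to\det V$. Since $(\det V)^{-1}\in\Pic X$, applying part (1) with $N=(\det V)^{-1}$ shows at once that $V^*$ is almost semistable whenever $V$ is.

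The argument is almost entirely formal, so I do not expect a deep obstacle; the two points demanding care are verifying that $V'\mapsto V'\otimes_X N$ genuinely sends subbundles to subbundles (so that local freeness of the quotient is preserved) and recalling the rank-two identification $V^*\simeq V\otimes_X(\det V)^{-1}$. I expect the subbundle bijection to be the step worth stating carefully, since almost semistability is defined in terms of subbundles rather than arbitrary subsheaves.
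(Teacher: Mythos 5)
Your proposal is correct and follows essentially the same route as the paper: part (1) via the observation that tensoring by $N$ shifts every gradient by the rank-independent quantity $c_1(N).H$ (so the bijection $V'\mapsto V'\otimes_X N$ on subbundles preserves the defining inequality, with the converse obtained by taking $N=\OO_X$), and part (2) by applying (1) to the identification $V^*\simeq V\otimes_X(\det V)^{-1}$. Your explicit remark that the subbundle condition is preserved under tensoring is a worthwhile point of care that the paper leaves implicit, but it does not change the argument.
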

\begin{proof}$\text{}$

        \begin{enumerate}
                \item
                        Suppose $V$ is almost semistable and  $V'\subseteq V\otimes_X N$. Then
                        $V'\otimes_X N^{-1}\subseteq V$ and so
                        $ \mu(V'\otimes_X N^{-1})\leq \mu(V)+1$ thus $\mu(V')-c_1(N).H\leq \mu(V)+1$
                        and so $\mu(V')\leq (V\otimes_X N)+1$.
	To see the converse simply let $ N=\OO_X$.
        \item Follows from (1) and the fact that $V^*\simeq V\otimes_X({\rm det}\;V)^{-1}$.
        \end{enumerate}
\end{proof} 
As we have seen in Proposition \ref{MyBogomolov}, $A$-line bundles are almost semistable. 
We will use the above proposition later on for proving various
properties regarding line bundles on $A$.
\subsection{Case 1: $c_1=\OO_Y(-1,-1)$}
As mentioned in Remark \ref{RemarkOnC} the problem of studying the moduli space of $A$-line bundles
with minimal $c_2$ naturally breaks up into two parts $c_1=\OO_Y(-1,-1)$ or $\OO_Y(-2,-2)$. In this subsection
we examine the former case.
By Proposition \ref{PropDelta} the minimal $c_2=0$ and
this corresponds to $\Delta =-2$, the smallest value possible. It is easy to see that the moduli space
of $A$-line bundles with these Chern classes isn't empty for clearly $A$ itself, regarded as a left $A$-module,
has the desired Chern classes. As it turns out, this is in fact the only such $A$-line bundle.
\begin{Thm}\label{Thm0}
        Let $M\in \Pic A$ with $c_1=\OO_Y(-1,-1)$ and $c_2=0$. Then $M\simeq A$. In particular, the coarse
        moduli space of $A$-line bundles with these Chern classes is  a point.
\end{Thm}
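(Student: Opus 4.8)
The plan is to show that the obvious line bundle $A$ accounts for the entire (already nonempty) moduli space: for an arbitrary $M$ with these invariants I would produce a nonzero $A$-module homomorphism $A\to M$ and prove it is an isomorphism. First I would record the Euler characteristic of ${}_YM$ by Riemann--Roch on $Y=\PP$. With rank $2$, $c_1=\OO_Y(-1,-1)$, $c_2=0$ and $\omega_Y=\OO_Y(-2,-2)$, and using $\OO_Y(a,b).\OO_Y(c,d)=ad+bc$, one computes $\chi({}_YM)=2\chi(\OO_Y)+\tfrac12\,c_1.(c_1-\omega_Y)-c_2=1$. Hence $h^0(M)-h^1(M)+h^2(M)=1$, so to guarantee a nonzero global section it suffices to prove $h^2(M)=0$.

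The vanishing $h^2(M)=0$ is the heart of the argument and is exactly where almost semistability is used. By Serre duality $h^2(M)=\hom{\OO_Y}{M}{\omega_Y}=h^0(M^\vee\otimes_Y\omega_Y)$. Since $M$ is almost semistable by Proposition \ref{MyBogomolov}, Proposition \ref{ahsprop} gives that $M^\vee$, and then $M^\vee\otimes_Y\omega_Y$, are almost semistable as well; the latter has $c_1=\OO_Y(-3,-3)$ and hence gradient $-3$. A nonzero section would yield an inclusion $\OO_Y\hookrightarrow M^\vee\otimes_Y\omega_Y$; saturating it to a line subbundle produces a subbundle of gradient $\geq 0$, contradicting the almost-semistability bound $\mu\leq -3+1=-2$. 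Therefore $h^2(M)=0$ and $h^0(M)\geq 1$.

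A nonzero global section is precisely a nonzero $A$-module map $f\colon A\to M$, via $\Hom{A}{A}{M}=H^0({}_YM)$. Because $A$ and $M$ each have dimension one over the division ring $k(A)$, $f$ is generically an isomorphism, so $\ker f$ is torsion; as $A$ is torsion free, $f$ is injective with torsion cokernel $Q$ on $Y$. Comparing Chern characters in $0\to A\to M\to Q\to 0$ and using that $A$ and $M$ share the same rank, $c_1$ and $c_2$ forces $\mathrm{ch}(Q)=0$: then $c_1(Q)=0$ makes $Q$ supported in dimension zero, and $\mathrm{ch}_2(Q)=0$ makes its length vanish, so $Q=0$. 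Thus $f$ is an isomorphism and $M\simeq A$, whence the coarse moduli space is the single reduced point $[A]$.

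The only delicate point is the $h^2$-vanishing of the second paragraph; once the gradient of $M^\vee\otimes_Y\omega_Y$ is seen to be $-3<-2$, the almost-semistability inequality of Proposition \ref{MyBogomolov} does all the work, and the remainder is the formal ``generic isomorphism plus Chern-character count'' that I expect will recur when handling the first Chern class $\OO_Y(-2,-2)$ as well.
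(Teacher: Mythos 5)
Your proposal follows essentially the same route as the paper: Riemann--Roch gives $\chi(M)=1$, almost semistability of $M^{*}\otimes_Y\omega_Y$ (via Propositions \ref{MyBogomolov} and \ref{ahsprop}) kills $h^2$, the resulting section yields a nonzero $A$-module map $A\to M$, and a Chern-class comparison forces it to be an isomorphism; your Chern-character count of the cokernel is in fact slightly more explicit than the paper's one-line ``first Chern classes are equal.'' The one thing you omit is any justification that the single point is reduced: the paper closes by computing $\Ext1AAA=\Ext1Y{\OO_Y}{\OO_Y\oplus\OO_Y(-1,-1)}=0$ to see the tangent space is zero-dimensional, whereas you simply assert ``reduced point''; you should either add that computation or appeal to the earlier remark that the moduli space is smooth because $A$ is del Pezzo.
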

\begin{proof}
        By the Riemann-Roch theorem
        $\chi(M)=1>0$. On the other hand $h^2(M)=h^0(\omega_Y\otimes_Y M^*)$ and $c_1(\omega_Y\otimes_YM^*)=
        \OO_Y(-3,-3)$. As we saw in Proposition \ref{MyBogomolov}, $M$ is almost semistable, and so by
        Proposition \ref{ahsprop}, $\omega_Y\otimes_Y M^*$ is also almost semistable and so
        $h^2(M)=0$. Thus $h^0(M)\neq 0$ and so $\OO_Y\hookrightarrow
	M$ which gives an injection of $A$-modules $A\otimes_Y \OO_Y=A\hookrightarrow M$. Since
	their first Chern classes equal, the map must be an isomorphism.

        Finally \[\Ext1AAA=\Ext1Y{\OO_Y}{\OO_Y\oplus\OO_Y(-1,-1)}=H^1(Y,\OO_Y\oplus\OO_Y(-1,-1))=0.\]  where
        the first equality follows from Proposition 2.6 of \cite{main} which asserts that
        there is a natural isomorphism of functors $\Ext iA{A\otimes_YN}-\simeq \Ext iYN-$ for any
        $N\in \Pic Y$. See Chapter 3 Exercise 5.6 of \cite{Hartshorne} for
                        the cohomology of $\PP$.  Thus
        the tangent space at the point corresponding to the $A$-line bundle $A$ is $0$-dimensional 
        and so the moduli space is just a point.
\end{proof} 
\subsection{Case 2: $c_1=\OO_Y(-2,-2)$}
We now study the second case mentioned in Remark \ref{RemarkOnC}: the case
where $c_1=\OO_Y(-2,-2)$. By Proposition \ref{PropDelta} the minimal $c_2=2$ which corresponds
to $\Delta=0$ which is its second smallest value for clearly $\Delta$ must be even. Note that $A\otimes_Y\OO_Y(-1,0)$ is an $A$-line bundle by Example \ref{AtensorN}
and has the desired Chern classes. Thus the moduli space of such $A$-line bundles is not empty.

From now on  ${\bf Pic}\;A$ will denote the moduli space of $A$-line bundles
with $c_1=\OO_Y(-2,-2)$ and $c_2=2$. We first establish all the possible $\OO_Y$-module structures
that such $A$-line bundles can have.
\begin{Thm}[$\OO_Y$-module structure]\label{ystruc}
        Let $M\in\Pic A$ with $c_1=\OO_Y(-2,-2)$ and $c_2=2$. Then either $M\simeq 
        \OO_Y(-1,-1)\oplus\OO_Y(-1,-1)$ as and $\OO_Y$-module or $M\simeq A\otimes_Y(-F)$ as
        $A$-modules where $F$ is either a $(1,0)$ or a $(0,1)$-divisor.
\end{Thm}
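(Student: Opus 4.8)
The plan is to analyze the $\OO_Y$-module structure of $M$ by first understanding how the filtration by powers of $L_\sigma$ constrains things, and then using almost-semistability together with the numerical data $c_1 = \OO_Y(-2,-2)$, $c_2 = 2$ (equivalently $\Delta = 0$) to pin down the splitting type. Since $_YM$ is a rank two vector bundle on $Y = \PP$ with these Chern classes, I would begin by recalling that rank two bundles on $\PP$ can be understood via their restriction to the two rulings, and that $\Delta = 0$ means $M$ is very close to being $\mu$-semistable --- in fact the gradient is $\mu(M) = c_1.H/2 = (-2,-2).(1,1)/2 = -2$. The first step is therefore to determine which destabilizing sub-line-bundles $N \hookrightarrow M$ are permitted: by Proposition \ref{MyBogomolov} any $\OO_Y$-subsheaf satisfies $\mu(N) \leq \mu(M) + 1 = -1$, so the maximal possible gradient of a sub-line-bundle is $-1$, attained precisely when $c_1(N).H = -1$, i.e. $N = \OO_Y(a,b)$ with $a + b = -1$.

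Next I would split into cases according to whether $M$ is properly almost semistable (a destabilizing sub-line-bundle of gradient exactly $-1$ exists) or genuinely $\mu$-semistable. In the $\mu$-semistable case, a bundle on $\PP$ with $c_1 = \OO_Y(-2,-2)$ and $\Delta = 0$ should be forced to be $\OO_Y(-1,-1) \oplus \OO_Y(-1,-1)$ or a nonsplit extension with the same Chern classes; here I would use that $\Delta = 0$ together with semistability to rule out the nonsplit possibilities, giving the first alternative of the theorem. In the properly-unstable case, I would take a maximal-gradient sub-line-bundle $N = \OO_Y(a,b)$ with $a+b = -1$, so without loss of generality (after reindexing the two $\P^1$ factors) $N = \OO_Y(0,-1)$ or a twist thereof, and examine the quotient $M/N$, which is another line bundle whose Chern class is determined by $c_1(M)$ and $c_2(M) = 2$. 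The key computation is that $c_2 = 2$ forces the extension data to be rigid enough that $M$, as an $A$-module, must coincide with $A \otimes_Y \OO_Y(-F)$ for $F$ a $(1,0)$- or $(0,1)$-divisor: one recognizes $A \otimes_Y \OO_Y(-F)$ from Example \ref{AtensorN} as an explicit $A$-line bundle whose underlying $\OO_Y$-structure is $\OO_Y(-F) \oplus L_\sigma \otimes_Y \OO_Y(-F) = \OO_Y(-F) \oplus \OO_Y(-1,-1)\otimes \sigma^*\OO_Y(-F)$, and I would check that its Chern classes match and that the destabilizing subbundle of the right gradient sits inside it in the expected way.

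The main obstacle I anticipate is the second case: showing that a properly almost-semistable $A$-line bundle with these Chern classes is forced into the very specific form $A \otimes_Y \OO_Y(-F)$ \emph{as an $A$-module}, not merely as an $\OO_Y$-module. The difficulty is that the $A$-module structure must be compatible with the $\OO_Y$-filtration $L_\sigma^{\otimes 2}\otimes_Y M < L_\sigma \otimes_Y M < M$, and I would need to show that the destabilizing sub-line-bundle $N$ generates $M$ as an $A$-module, i.e. that the natural map $A \otimes_Y N \to M$ is surjective and hence (by comparing first Chern classes, as in the proof of Theorem \ref{Thm0}) an isomorphism. To control this I would compute $\mathrm{Hom}$ and $\mathrm{Ext}^1$ groups using the isomorphism of functors $\Ext iA{A\otimes_Y N}- \simeq \Ext iY N-$ cited from Proposition 2.6 of \cite{main}, which converts the $A$-theoretic question into a cohomology computation on $\PP$ that can be carried out explicitly. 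The identification of $F$ as specifically a $(1,0)$- or $(0,1)$-divisor (rather than some other degree-$(-1)$ class) should then follow from requiring $A \otimes_Y \OO_Y(-F)$ to have $c_2 = 2$, which constrains $\sigma^* F$ relative to $F$ and singles out these two symmetric choices.
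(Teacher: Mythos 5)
Your proposal is correct and follows essentially the same route as the paper: there too the argument combines Riemann--Roch with the almost-semistability bound of Proposition \ref{MyBogomolov} to produce a sub-line-bundle $\OO_Y(-1,-1)\hookrightarrow M$, then splits according to whether a larger line bundle $\OO_Y(-F)$ embeds (in which case the induced map $A\otimes_Y\OO_Y(-F)\to M$ is an isomorphism by comparing first Chern classes) or not (in which case the torsion-free quotient is forced by Chern class bookkeeping to be $\OO_Y(-1,-1)$ and the vanishing of the relevant ${\rm Ext}^1$ group splits the extension). Your reorganisation of the dichotomy as $\mu$-semistable versus properly almost-semistable, and your use of the $c_2$ constraint to exclude sub-line-bundles such as $\OO_Y(1,-2)$, are only cosmetic departures from the paper's treatment.
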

\begin{proof}
        The beginning of this proof is very similar
        to the proof of Theorem \ref{Thm0} so we skip some details which we have already explained there.
        Let $M_1=\OO_Y(1,1)\otimes_YM$. Then $c_1(M_1)=2c_1(\OO_Y(1,1))+c_1(M)=0$ and $c_2(M_1)=
        c_2(M)+c_1(M).c_1(\OO_Y(1,1))+c_1(\OO_Y(1,1))^2=2-4+2=0$.
        Thus by the Riemann-Roch theorem
        $\chi(M_1)=2>0$. 
         However, $h^2(M_1)=h^0(\omega_Y\otimes_Y M_1^*)$ whilst $\omega_Y\otimes_YM_1^*$ is almost semistable
         with gradient $-4$ and so 
        $h^0(\omega_Y\otimes_YM_1^*)=h^2(M_1)=0$ and so $h^0(M_1)\neq 0$. 
        Thus we 
        know $\OO_Y(-1,-1)\hookrightarrow M$. 
        Now if there exists a bigger $\OO_Y$-line bundle (ordered by inclusion) which embeds into $M$ then 
        $\OO_Y(-F)$ 
        embeds into $M$ where $F$ is either a $(1,0)$ or a $(0,1)$-divisor. This extends to an embedding
        $A\otimes_Y\OO_Y(-F)\hookrightarrow M$ of $A$-line bundles 
        and so comparison of the first Chern classes guarantees that $M\simeq\A\OO_Y(-F)$. 
        Suppose on the other hand that
        $\OO_Y(-1,-1)$ is the biggest line bundle which embeds into $M$. Let the quotient be $Q$ and
        note that it is torsion free.
        By Proposition 5 (ii) in \cite{Friedman} $Q=L'\otimes_Y{\mathcal I}_Z$ for some
        $L'\in\Pic Y$ and ${\mathcal I}_Z$ being the ideal sheaf of some $0$-dimensional subscheme. Computing
        Chern classes we see that $L'=\OO_Y(-1,-1)$ and $Z=0$.
        Finally, $\Ext 1Y{\OO_Y(-1,-1)}
        {\OO_Y(-1,-1)}=0$ an so we see
        that as an $\OO_Y$-module $M\simeq \OO_Y(-1,-1)\oplus \OO_Y(-1,-1)$.
\end{proof}
This result is very different to what Chan and Kulkarni encountered in \cite{main}. In their example
if an $A$-module was split as an $\OO_Y$-module then they prove that the module must be of the form
$A\otimes_YN$ for some $N\in\Pic Y$. Furthermore, any rank two vector bundle on $Y$ could be
given at most two $A$-module structures. In our case, as the above theorem at least suggests,
the $\OO_Y$-vector bundle $\OO_Y(-1,-1)\oplus\OO_Y(-1,-1)$ can be given an infinite number
of non-isomorphic $A$-module structures. In the following proposition, we prove that this is indeed
the case.
\begin{Prop}\label{Propdim}
        The tangent space  to {\bf Pic}\;A at the point corresponding to $A\otimes_Y\OO_Y(0,-1)$ and
        $A\otimes_Y\OO_Y(-1,0)$ has dimension $1$.
\end{Prop}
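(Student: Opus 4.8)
The plan is to identify the Zariski tangent space to $\mathbf{Pic}\;A$ at the relevant point with an $\Ext^1$ of the corresponding module into itself, exactly as in the proof of Theorem \ref{Thm0}, and then to compute that $\Ext$-group by hand. Concretely, for $M=\A\OO_Y(0,-1)$ the tangent space is $\Ext 1AMM$. The two modules named in the statement are interchanged by the symmetry $\sigma$ (equivalently, by swapping the two $\P^1$-factors of $Y$), so it suffices to carry out the computation for $M=\A\OO_Y(0,-1)$; the case $\A\OO_Y(-1,0)$ is word-for-word the same after interchanging the factors.

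First I would invoke Proposition 2.6 of \cite{main}, which provides the natural isomorphism $\Ext iA{\A N}-\simeq\Ext iYN-$ for $N\in\Pic Y$. Taking $N=\OO_Y(0,-1)$, this reduces the problem to computing $\Ext 1Y{\OO_Y(0,-1)}{M}$, where $M$ is now regarded merely as an $\OO_Y$-module. This is the same device already used to handle $\Ext1AAA$ in Theorem \ref{Thm0}.

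The key step is therefore to pin down the $\OO_Y$-module structure of $M=\A\OO_Y(0,-1)$. Writing $A=\OO_Y\oplus L_\sigma$ with $L=\OO_Y(-1,-1)$, one has $M=\OO_Y(0,-1)\oplus\bigl(L_\sigma\otimes_Y\OO_Y(0,-1)\bigr)$, and the $\sigma$-twist in the second summand yields $L_\sigma\otimes_Y\OO_Y(0,-1)=\OO_Y(-1,-1)\otimes_Y\sigma^*\OO_Y(0,-1)=\OO_Y(-2,-1)$. Thus $M\simeq\OO_Y(0,-1)\oplus\OO_Y(-2,-1)$ as an $\OO_Y$-module, which is the $\A\OO_Y(-F)$ case of Theorem \ref{ystruc} and indeed carries $c_1=\OO_Y(-2,-2)$, $c_2=2$. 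Substituting and using that the line bundle is free over $Y$, the $\Ext$ splits as
\[\Ext 1Y{\OO_Y(0,-1)}{\OO_Y(0,-1)}\oplus\Ext 1Y{\OO_Y(0,-1)}{\OO_Y(-2,-1)}=H^1(Y,\OO_Y)\oplus H^1(Y,\OO_Y(-2,0)).\]

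Finally I would evaluate these two cohomology groups on $Y\simeq\PP$ via the Künneth formula: $H^1(Y,\OO_Y)=0$, while $H^1(Y,\OO_Y(-2,0))=H^1(\P^1,\OO(-2))\otimes H^0(\P^1,\OO)$ is one-dimensional. Hence $\ext 1AMM=1$, as claimed, and the symmetry argument disposes of $\A\OO_Y(-1,0)$. I expect the only delicate point to be the bookkeeping of the $\sigma$-twist when decomposing $M$ as an $\OO_Y$-module; once that identification is correct, the rest is the adjunction of \cite{main} together with a routine line-bundle cohomology calculation on $\PP$.
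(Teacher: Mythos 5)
Your proposal is correct and follows essentially the same route as the paper: identify the tangent space with $\ext 1AMM$, reduce via the adjunction $\Ext iA{\A N}-\simeq\Ext iYN-$, decompose $\A\OO_Y(0,-1)$ as $\OO_Y(0,-1)\oplus\OO_Y(-2,-1)$, and evaluate the resulting line-bundle cohomology on $\PP$ (the paper works with $\A\OO_Y(-1,0)\simeq\OO_Y(-1,0)\oplus\OO_Y(-1,-2)$ and handles the other case by the same symmetry you invoke). The $\sigma$-twist bookkeeping in your decomposition is handled correctly.
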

\begin{proof}
        The dimension of the tangent space is given by:\\
        $\ext 1A{A\otimes_Y\OO_Y(-1,0)}{A\otimes_Y\OO_Y(-1,0)}\\=\ext 1Y{\OO_Y(-1,0)}
        {\OO_Y(-1,0)\oplus\OO_Y(-1,-2)}=1.
        $ The other case is identical.
\end{proof}

Thus at least one connected component of this moduli space is a smooth curve with all, except at most $2$ points, corresponding
to $A$-modules with the underlying $\OO_Y$-module structure being $\OO_Y(-1,-1)\oplus\OO_Y(-1,-1)$.

We finish off the section with an algebraic description of the $A$-line bundles.
\begin{Prop}\label{PropMinA}
        Let $M\in\Pic A$ with $c_1=\OO_Y(-2,-2)$ and $c_2=2$. Then $\Hom AMA=2$. Further, if
        $0\neq\varphi\in\Hom AMA$ then $\varphi$ is injective.
\end{Prop}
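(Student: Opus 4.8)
The plan is to split Proposition \ref{PropMinA} into the two assertions $\hom A M A=2$ and injectivity of nonzero maps, and to handle the dimension count by Serre duality for the del Pezzo order $A$. The point is that Serre duality will convert $\Hom A M A$ into a cohomology group on $Y$ that depends only on the underlying $\OO_Y$-module ${}_YM$, and Theorem \ref{ystruc} has already listed all the possibilities for ${}_YM$. Injectivity will be a soft argument using only that $M$ is a line bundle and torsion free.

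For the dimension count I would first invoke Serre duality for the order $A$ (see \cite{bimod}): since $A$ has homological dimension $2$ with canonical bimodule $\omega_A$, any locally projective left $A$-module $M$ satisfies
\[\Hom A M A^{*}\cong\Ext{2}{A}{A}{\omega_A\otimes_A M}.\]
Because $A$ is free of rank one over itself, $\Ext{2}{A}{A}{-}$ is simply $H^2(Y,-)$; and because $A$ is del Pezzo with $\omega_A=\A\OO_Y(-H)$, the Serre functor $\omega_A\otimes_A-$ coincides with the autoequivalence $\OO_Y(-H)\otimes_Y-$ of Remark \ref{RemarkOnC}. Combining these gives the clean formula
\[\hom A M A=h^2\bigl(Y,\OO_Y(-1,-1)\otimes_Y M\bigr),\]
whose right-hand side sees only ${}_YM$.

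Next I would feed in the two possibilities from Theorem \ref{ystruc}. If ${}_YM\simeq\OO_Y(-1,-1)^{\oplus2}$, then $\OO_Y(-1,-1)\otimes_Y M\simeq\OO_Y(-2,-2)^{\oplus2}$, and a one-line computation on $Y$ (via $\omega_Y=\OO_Y(-2,-2)$) gives $h^2=2$. If instead $M\simeq\A\OO_Y(-F)$ with $F$ a $(1,0)$- or $(0,1)$-divisor, then ${}_YM\simeq\OO_Y(-F)\oplus\bigl(\OO_Y(-1,-1)\otimes\sigma^{*}\OO_Y(-F)\bigr)$, and the same Serre-duality computation on $\PP$ yields $h^2=2$, the two line-bundle summands contributing $0$ and $2$. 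In this second case one can alternatively bypass the order Serre duality entirely, since $\Hom A{\A\OO_Y(-F)}A\cong\Hom Y{\OO_Y(-F)}{{}_YA}$ by Proposition 2.6 of \cite{main}, and $\Hom Y{\OO_Y(-F)}{\OO_Y\oplus\OO_Y(-1,-1)}$ is again $2$-dimensional; this is a useful cross-check.

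Finally, for injectivity let $0\neq\varphi\in\Hom A M A$. Since $M\in\Pic A$, the localization $k(A)\otimes_A M$ is one-dimensional over the division ring $k(A)$, as is $k(A)\otimes_A A=k(A)$; moreover $M$ is $\OO_Y$-locally free, hence torsion free over the centre, so the natural map $M\hookrightarrow k(A)\otimes_A M$ is injective. Choosing $m$ with $\varphi(m)\neq0$ shows that $k(A)\otimes_A\varphi$ is a nonzero $k(A)$-linear map between one-dimensional spaces, hence an isomorphism; therefore $k(A)\otimes_A\ker\varphi=0$, so $\ker\varphi$ is a torsion subsheaf of the torsion-free module $M$ and must vanish. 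The only genuine obstacle in the whole argument is pinning down the exact form of Serre duality for $A$ and the identification of its Serre functor with $\OO_Y(-H)\otimes_Y-$; once those are in place, everything reduces to bookkeeping with line bundles on $\PP$.
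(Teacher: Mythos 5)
Your proposal is correct and follows essentially the same route as the paper: both reduce to the classification of the underlying $\OO_Y$-module structure from Theorem \ref{ystruc} and compute $\hom AMA$ via Serre duality for the del Pezzo order (the paper uses the adjunction $\Hom A{\A N}{-}\simeq\Hom YN{-}$ directly in the split case $M\simeq\A\OO_Y(-F)$, which you also note as a cross-check), and both deduce injectivity from $M$ and $A$ being rank one over the division ring $k(A)$ and torsion free. Your injectivity argument is in fact a more explicit version of the paper's one-line assertion.
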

\begin{proof}
        We consider all the possibilities from Theorem \ref{ystruc}.
        If $M\simeq A\otimes_Y\OO_Y(-F)$ then 
        \begin{align*}
                \hom AMA&=\hom A{A\otimes_Y\OO_Y(-F)}A\\
                &=\hom Y{\OO_Y(-F)}{\OO_Y\oplus\OO_Y(-1,-1)}=2.
        \end{align*}

        If, on the other hand, $M\simeq\OO_Y(-1,-1)\oplus\OO_Y(-1,-1)$ as an $\OO_Y$-module then:
        \begin{align*}
                \hom AMA&=\ext 2AA{\omega_A\otimes_AM}^*\\
                &=\ext 2Y{\OO_Y}{\OO_Y(-H)\otimes_YM}^*\\
                &=h^2(Y,\OO_Y(-2,-2)\oplus\OO_Y(-2,-2))\\
                &=h^0(Y,\OO_Y\oplus\OO_Y)=2.
        \end{align*} Since $M$ and $A$ are torsion free, any non zero map $M\to A$ must be injective.
\end{proof}
To understand better how $M$ sits inside $A$ we need to understand the all the possible cokernels. We do so, in the next theorem.
\begin{Thm}\label{quotientscomputation}
        Let $M\in\Pic A$ with $c_1=\OO_Y(-2,-2)$ and $c_2=2$. Then for any $0\neq\varphi\in\hom AMA$
        there exists an exact sequence of $A$-modules
        \[0\longrightarrow M\stackrel\varphi\longrightarrow A\longrightarrow Q\longrightarrow 0\] where:
        \begin{enumerate}
                \item if $M\simeq A\otimes_Y\OO_Y(-1,0)$ (respectively $M\simeq A\otimes_Y\OO_Y(0,-1)$) 
                        then $Q\simeq A\otimes_Y\OO_F$ where $F$ is a $(1,0)$ 
                        (respectively  $(0,1)$) divisor;
                \item if $M\simeq \OO_Y(-1,-1)\oplus\OO_Y(-1,-1)$ 
                        then $Q\simeq \OO_C$ as an $\OO_Y$-module, where $C$ is a $(1,1)$-divisor.
        \end{enumerate}
\end{Thm}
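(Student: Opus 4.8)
The plan is to fix a nonzero $\varphi\in\hom AMA$, invoke Proposition \ref{PropMinA} to conclude that $\varphi$ is injective, and set $Q:=\mathrm{coker}\,\varphi$, which produces the exact sequence $0\to M\xrightarrow{\varphi}A\to Q\to 0$ at once. Since both $_YM$ and $_YA$ have rank two, $Q$ is an $\OO_Y$-torsion sheaf, and a Whitney-sum computation of Chern classes (using $_YA\simeq\OO_Y\oplus\OO_Y(-1,-1)$, so $c_1(A)=\OO_Y(-1,-1)$ and $c_2(A)=0$) gives $c_1(Q)=\OO_Y(1,1)$ and $c_2(Q)=2$; thus $Q$ is supported on a $(1,1)$-curve. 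This is the common skeleton, and the two cases differ only in how one pins $Q$ down exactly.

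For (1), where $M\simeq A\otimes_Y\OO_Y(-1,0)$, I would push everything through the adjunction $\Hom A{A\otimes_Y N}{-}\simeq\Hom Y N{-}$ of Proposition 2.6 of \cite{main}. It identifies $\hom A{A\otimes_Y\OO_Y(-1,0)}{A}$ with $\hom Y{\OO_Y(-1,0)}{\OO_Y\oplus\OO_Y(-1,-1)}=H^0(\OO_Y(1,0))$, the $\OO_Y(-1,-1)$-summand contributing nothing. The key observation, verified by restricting to the generating copy of $\OO_Y(-1,0)$, is that this identification is realized by $s\mapsto A\otimes_Y s$; hence every nonzero $\varphi$ equals $A\otimes_Y s$ for a nonzero section $s$ of $\OO_Y(1,0)$ cutting out a $(1,0)$-divisor $F$. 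Applying the exact functor $A\otimes_Y-$ (exact since $A$ is $\OO_Y$-locally free) to $0\to\OO_Y(-1,0)\xrightarrow{s}\OO_Y\to\OO_F\to 0$ then yields $Q\simeq A\otimes_Y\OO_F$ directly as $A$-modules. The $(0,-1)$ case is symmetric under interchange of the two rulings.

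For (2), where $_YM\simeq\OO_Y(-1,-1)^{\oplus 2}$, I would forget the $A$-structure and write $\varphi$ as a $2\times 2$ matrix $\OO_Y(-1,-1)^{\oplus 2}\to\OO_Y\oplus\OO_Y(-1,-1)$. The two entries landing in the $\OO_Y$-summand are sections of $\OO_Y(1,1)$, while the two entries landing in the $\OO_Y(-1,-1)$-summand are scalars, since $\hom Y{\OO_Y(-1,-1)}{\OO_Y(-1,-1)}=k$. Injectivity forces $\det\varphi\neq 0$, so at least one of those two scalars is a unit. Using this unit I would perform constant column operations on the source together with an automorphism of the target that is upper triangular over $H^0(\OO_Y(1,1))$ to bring $\varphi$ into the normal form $\mathrm{diag}(g,1)$ with $g\in H^0(\OO_Y(1,1))$ a nonzero multiple of $\det\varphi$. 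Since these operations are isomorphisms of source and target, they do not alter $\mathrm{coker}\,\varphi$, and the cokernel of the normal form is visibly $\OO_C$ with $C=\{g=0\}$ a $(1,1)$-divisor.

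The main obstacle is precisely this identification in case (2): one must resist settling for ``$Q$ is a rank-one torsion sheaf with the correct Chern classes on a $(1,1)$-curve'' and instead show it is exactly the structure sheaf $\OO_C$. The leverage that makes this possible is that the maps into the $\OO_Y(-1,-1)$-summand are scalars with a unit among them, which is exactly what permits the Gaussian-elimination reduction to diagonal form; without it the cokernel could a priori acquire embedded points or a nontrivial twist along $C$.
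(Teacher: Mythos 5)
Your proposal is correct, and its skeleton coincides with the paper's: injectivity comes from Proposition \ref{PropMinA}, case (1) is handled by observing that $\hom A{\A\OO_Y(-1,0)}A=\hom Y{\OO_Y(-1,0)}{\OO_Y}$ so that every $\varphi$ is $A\otimes_Y s$ for a section $s$ cutting out a $(1,0)$-divisor, and case (2) is handled by writing $\varphi$ as a $2\times 2$ matrix $\left(\begin{smallmatrix}\varphi_1&\lambda_1\\ \varphi_2&\lambda_2\end{smallmatrix}\right)$ with $\varphi_i\in H^0(\OO_Y(1,1))$ and $\lambda_i\in k$ not both zero. The one place you genuinely diverge is the endgame of case (2): the paper writes down an explicit candidate for the quotient map $\psi\colon A\to\OO_Y/\,{\rm im}(\lambda_2\varphi_1-\lambda_1\varphi_2)$ and defers the verification that $0\to M\to A\to Q\to 0$ is exact to a ``routine local computation'' in \cite{Thesis}, whereas you diagonalize $\varphi$ to ${\rm diag}(g,1)$ using ${\rm Aut}_Y(\OO_Y(-1,-1)^{\oplus 2})=GL_2(k)$ on the source and the triangular automorphisms of $\OO_Y\oplus\OO_Y(-1,-1)$ on the target, the off-diagonal entry $\Hom Y{\OO_Y(-1,-1)}{\OO_Y}=H^0(\OO_Y(1,1))$ being exactly what is needed to clear the remaining entry once a nonzero $\lambda_i$ has been moved into position. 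This elimination argument is self-contained, makes the identification $Q\simeq\OO_Y/(\lambda_2\varphi_1-\lambda_1\varphi_2)$ transparent without any local check, and is legitimate because the theorem only asserts the isomorphism $Q\simeq\OO_C$ as $\OO_Y$-modules, so $\OO_Y$-linear (rather than $A$-linear) automorphisms of source and target may be used freely; what it does not give you, and what the paper's explicit $\psi$ does, is a concrete formula for the $A$-module quotient map itself, which is harmless here but is the small price of the cleaner route.
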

\begin{proof}From the previous proposition, we know $\varphi\!:M\to A$ is injective. 
        Let us compute the cokernel.
        \begin{enumerate}
                \item We prove only the case where $M\simeq A\otimes_Y\OO_Y(-1,0)$ because
                        the other is similar.
                        Note that $\hom Y{\OO_Y(-1,0)}{\OO_Y}=2=\hom AMA$ and
                        so all  $A$-module morphisms arise from an $\OO_Y$-module morphism
                        $\OO_Y(-1,0)\to \OO_Y$ via $A\otimes_Y-$.
                         Since any non zero morphism
                        $\OO_Y(-1,0)\to \OO_Y$ gives rise to the following exact sequence \[0\longrightarrow
                        \OO_Y(-1,0)\longrightarrow \OO_Y\longrightarrow \OO_F\longrightarrow 0\] for some $(1,0)$-divisor
                        $F$
                        and because $A$ is flat over $Y$, the result follows.
                \item Note that with respect to the $\OO_Y$-module decomposition 
                        \begin{align*}
                                M&=\OO_Y(-1,-1)\oplus\OO_Y(-1,-1)\\
                                A&=\OO_Y\oplus\OO_Y(-1,-1)
                        \end{align*} we have
                        \begin{align*}
                                \Hom YMA&=\Hom Y{\OO_Y(-1,-1))\oplus\OO_Y(-1,-1)}{\OO_Y\oplus\OO_Y(-1,-1)}\\
                         &=\left(
                        \begin{array}[]{ll}
                                H^0(Y,\OO_Y(-1,-1)^*)&{\rm End}_Y(\OO_Y(-1,-1))\\
                                H^0(Y,\OO_Y(-1,-1)^*)&{\rm End}_Y(\OO_Y(-1,-1))
                        \end{array}
                        \right).       
                        \end{align*}
                        Thus any $\OO_Y$-module morphism $\varphi\!:M\to A$ is given by 
                        $X=\left( 
                        \begin{array}[]{ll}
                                \varphi_1 &\lambda_1\\
                                \varphi_2 &\lambda_2
                        \end{array}
                        \right)$ where $\varphi_1,\varphi_2\in\OO_Y(-1,-1)^*$ and 
                        $\lambda_1,\lambda_2\in{\rm End}_Y(\OO_Y(-1,-1))=k$ which
                        acts as right multiplication on the row vector $\OO_Y(-1,-1)\oplus\OO_Y(-1,-1)$. For
                        this to be in fact an $A$-module morphism further conditions on $X$ need to be imposed.
                        In particular $\varphi$ needs to be injective and so $\lambda_1,\lambda_2$ are not
                        both zero.
                        
                        We claim that
                        \[Q=\frac{\OO_Y}{ {\rm im}({\lambda_2\varphi_1-\lambda_1\varphi_2})}\] 
                        and that we have the following exact sequence
                        \[0\longrightarrow M\stackrel \varphi\longrightarrow A
                        \stackrel\psi\longrightarrow Q\longrightarrow 0\]
                        with $\psi\!:A\to Q$ given by right multiplication by
                        \[\left\{  
                        \begin{array}[]{cl}
                        \left(\begin{array}[]{c}
                                \lambda_1+\lambda_2\\-(\varphi_1+\varphi_2)
                        \end{array}
                        \right)&\text{if $\lambda_1+\lambda_2\neq 0$}\\
                        \left( 
                        \begin{array}[]{c}
                                \lambda_1\\-\varphi_1
                        \end{array}
                        \right)&\text{if $\lambda_1+\lambda_2=0.$}         
                        \end{array}
                        \right.\]
                        Since $M\to A$ must be injective,
                        ${\rm im}(\lambda_2\varphi_1-\lambda_1 \varphi_2)\neq 0$ and so, $Q$ is isomorphic, 
                        as an $\OO_Y$-module, to $\OO_C$ for some
                        $(1,1)$-divisor $C$. The proof of this claim is just a routine local computation
                        and is done in Lemma 2.4.5 in \cite{Thesis}.
                          \end{enumerate}
\end{proof}
The above theorem suggests that we should study quotients of  $A$. In particular, we should try to better understand
the component(s) of the Hilbert scheme of $A$ containing the $A$-modules whose underlying $\OO_Y$-module structure
is $\OO_C$ where $C$ is a $(1,1)$-divisor. We do this in the following
section.
\section{The Hilbert Scheme of $A$}
In this section we will study the Hilbert scheme of $A$ -- the moduli space
of left sided quotients of $A$ with a fixed set of Chern classes. This is a closed subscheme of
the classical Quot scheme of $A$, which is projective provided we fix a Hilbert polynomial. See Chapter 3 in \cite{Thesis}
for all the details.

Mimicking the commutative case, one should think
of a quotient of $A$, which is supported on a curve on $Y$, as a noncommutative curve
lying on $A$. As mentioned at the end of the last section, we are primarily interested 
in those quotients of $A$ which are supported on a $(1,1)$-divisor on $Y$.
\subsection{Properties of ${\bf Hilb}\;A$}
Recall that in Theorem \ref{quotientscomputation} we saw a link between the moduli space of $A$-line bundles with
with $c_1=\OO_Y(-2,-2)$ and $c_2=2$ and quotients of $A$, or noncommutative curves on $A$, with
$c_1=\OO_Y(1,1)$ and $c_2=2$.

\begin{Prop}\label{PropKer}
       Let $S$ be a scheme. Let $\sheafF$ be a flat family of quotients of $A$ on  $S$ with Chern
       classes $c_1=\OO_Y(1,1)$ and $c_2=2$. Let $I:=\ker(A_S\to \sheafF)$. Then
       $I$ is a flat family of $A$-line bundles on $S$ with Chern classes $c_1=\OO_Y(-2,-2)$ and $c_2=2$.
\end{Prop}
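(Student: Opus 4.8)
The plan is to verify the two properties claimed of $I:=\ker(A_S\to\sheafF)$ separately: first that it is a flat family, and second that each fibre $I_s$ is an $A$-line bundle with the stated Chern classes. For flatness, I would argue pointwise over $S$. Since $\sheafF$ is $S$-flat and $A_S$ is certainly $S$-flat, the standard criterion (flatness of a kernel given flatness of the target and of the ambient sheaf, see for example the long exact sequence argument in \cite{Hartshorne} III.9) shows that $I$ is $S$-flat and that formation of $I$ commutes with base change; that is, for each closed point $s\in S$ the fibre $I_s$ is precisely $\ker(A\to\sheafF_s)$. This reduces the whole statement to the fibrewise claim: for a single quotient $A\to Q$ with $c_1(Q)=\OO_Y(1,1)$ and $c_2(Q)=2$, the kernel $M:=\ker(A\to Q)$ lies in $\Pic A$ with $c_1(M)=\OO_Y(-2,-2)$ and $c_2(M)=2$.

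For the Chern class computation, I would work in the Grothendieck group on $Y$ using the exact sequence
\[0\longrightarrow M\longrightarrow A\longrightarrow Q\longrightarrow 0,\]
which gives $c(M)\,c(Q)=c(A)$ via Whitney's formula. Since $_YA\simeq\OO_Y\oplus\OO_Y(-1,-1)$ we have $c_1(A)=\OO_Y(-1,-1)$ and $c_2(A)=\OO_Y(-1,-1)^2$, which on $\PP$ is $2$. Feeding in $c_1(Q)=\OO_Y(1,1)$ and $c_2(Q)=2$ then forces $c_1(M)=c_1(A)-c_1(Q)=\OO_Y(-2,-2)$ and, after expanding the degree-two part, $c_2(M)=2$; this is a routine intersection-number calculation on $Y=\PP$ using $H^2=2$ and the known intersection form. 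So the Chern classes come out essentially for free from the sequence.

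The substantive part — and the step I expect to be the main obstacle — is showing that $M$ is genuinely an $A$-line bundle, i.e.\ by Proposition \ref{PropLocFree} that $_YM$ is locally free of rank two. It is immediate that $M$ is an $A$-submodule of $A$, hence torsion free, and that it has generic rank two over $Y$ (since $Q$ is supported on a curve and so is $k(Y)$-torsion). Local freeness is the delicate point: a priori $M$ could fail to be locally free at finitely many points of $Y$. The cleanest route is to argue homologically, exploiting that $A$ is locally of global dimension $2$ (the fact underlying Proposition \ref{PropLocFree}). Concretely, I would show that $Q$ has homological dimension one as an $A$-module — plausible because $Q$ is, as an $\OO_Y$-module, a line bundle $\OO_C$ on a smooth-in-codimension-one $(1,1)$-curve, hence a Cohen--Macaulay sheaf of the expected depth — and then conclude from the sequence that $M$, being the kernel of a surjection from the projective module $A$ onto a module of projective dimension one, is itself projective (locally free) over $A$. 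Equivalently, one checks that $\EXT 1A{Q}{-}$ being the only obstruction and $\EXT 2A{Q}{-}$ vanishing forces $M$ to be locally projective. I would organize this as a local computation at the finitely many points where local freeness is in question, using the Auslander--Buchsbaum-type equality for the Azumaya-away locus and a direct depth estimate on the ramification locus $E\cup E'$, where $A$ is genuinely non-Azumaya and the argument is most delicate. Once local projectivity over $A$ is established, Proposition \ref{PropLocFree} upgrades this to $_YM$ being rank-two locally free, completing the proof.
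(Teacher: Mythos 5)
Your flatness reduction and the Whitney-formula computation of the Chern classes follow the paper's proof, though note a slip: since $_YA\simeq\OO_Y\oplus\OO_Y(-1,-1)$, Whitney gives $c_2(A)=c_1(\OO_Y)\cdot c_1(\OO_Y(-1,-1))=0$, not $\OO_Y(-1,-1)^2=2$; with $c_2(A)=0$ the degree-two part reads $0=c_2(M)+\OO_Y(-2,-2).\OO_Y(1,1)+2=c_2(M)-2$, which is how one actually lands on $c_2(M)=2$. With your value $c_2(A)=2$ the same expansion would give $c_2(M)=4$.

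The genuine gap is in the local-freeness step. Your plan is to show $Q$ has projective dimension one over $A$ because it is ``a line bundle $\OO_C$ on a curve, hence Cohen--Macaulay of the expected depth'' --- but that description of $Q$ is Corollary \ref{CorQuot}, which is deduced \emph{from} this proposition, and more fundamentally the hypothesis only fixes the Chern classes of $Q$: nothing rules out a priori that $Q$ has a zero-dimensional subsheaf (an embedded or floating point), in which case $Q$ is not Cohen--Macaulay of dimension one and $M$ is not locally free. Indeed, purity of $Q$ is essentially equivalent to local freeness of $M$ via Auslander--Buchsbaum, so your argument is circular at exactly the point where the content lies; no depth estimate on the ramification locus can substitute for it. The paper closes this gap with the minimality of $c_2$: the double dual $I_{k(p)}^{**}$ is a reflexive, hence locally free, rank-two sheaf carrying an $A$-module structure, so it lies in $\Pic A$ and Proposition \ref{PropDelta} forces $c_2(I_{k(p)}^{**})\geq 2$; since $I_{k(p)}\subseteq I_{k(p)}^{**}$ with zero-dimensional quotient one also has $c_2(I_{k(p)}^{**})\leq c_2(I_{k(p)})=2$, whence $I_{k(p)}=I_{k(p)}^{**}$ is locally free (and, as a byproduct, $Q$ is pure). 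You should replace your homological step with this double-dual argument, or equivalently first prove purity of $Q$ by the same $c_2$ comparison applied to the kernel of $A\to Q/(\text{torsion})$.
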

\begin{proof}
  $I$ is flat over $S$ because $A_S$ and $\sheafF$ are. Restricting to the fibre
        above any $p\in S$  we get \[0\longrightarrow I_{k(p)}\longrightarrow A\longrightarrow 
        \sheafF_{k(p)}\longrightarrow 0\] of $A$-modules which is exact because $\sheafF$ is flat
        over $S$ and so $Tor_{\OO_S}^1(\sheafF,k(p))=0$. Since
        \begin{align*}
                c_1(A)=\OO_Y(-1,-1),\quad c_2(A)=0,\quad c_1(\sheafF_{k(p)})=\OO_Y(1,1), \quad c_2(\sheafF_{k(p)})=2       
        \end{align*}
         we see that
         $c_1(I_{k(p)})=\OO_Y(-2,-2)$ and $c_2(I_{k(p)})=2$. 
         $I_{k(p)}$ is torsion free  and so $I_{k(p)}^{**}\in\Pic A$ because
         it is reflexive and hence locally free over $Y$. By
         Proposition \ref{PropDelta} we have $c_2(I_{k(p)}^{**})=2$ and so $I_{k(p)}^{**}=I_{k(p)}$.
        \end{proof}

Having established a relationship between flat families of $A$-line bundles and flat families of
quotients of $A$, we now use Theorem \ref{ystruc} to classify all the possible $\OO_Y$-module structures
that quotients of $A$ may possess. As we shall see some (and, as we shall later see, most)
must all also be quotients of $\OO_Y$.
\begin{Cor}\label{CorQuot}
        Let $Q$ be a quotient of $A$ with $c_1=\OO_Y(1,1)$ and $c_2=2$. Then either:
        \begin{itemize}
                \item $Q\simeq A\otimes_F\OO_F$ (as an $A$-module) where $F$ is either a $(1,0)$ or $(0,1)$-divisor; or
                \item $Q\simeq \OO_C$ (as an $\OO_Y$-module) for some $\sigma$-invariant $(1,1)$-divisor $C\subset Y$.
        \end{itemize}
\end{Cor}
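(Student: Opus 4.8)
The plan is to bootstrap directly from Proposition \ref{PropKer} together with the structural Theorem \ref{quotientscomputation}. Given a quotient $Q$ of $A$ with $c_1=\OO_Y(1,1)$ and $c_2=2$, I would first set $M:=\ker(A\to Q)$. By Proposition \ref{PropKer} (applied to the one-point scheme $S=\Spec k$), $M$ is an $A$-line bundle with $c_1=\OO_Y(-2,-2)$ and $c_2=2$; that is, $M\in{\bf Pic}\;A$. Thus the inclusion $M\hookrightarrow A$ realises $Q$ as the cokernel of a nonzero element $\varphi\in\hom AMA$, and the classification of such cokernels is exactly the content of Theorem \ref{quotientscomputation}. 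Feeding the two cases of that theorem back in, I obtain immediately that either $Q\simeq A\otimes_Y\OO_F$ for a $(1,0)$ or $(0,1)$-divisor $F$, or $Q\simeq\OO_C$ as an $\OO_Y$-module for some $(1,1)$-divisor $C$. So the bulk of the corollary is a direct transport of Theorem \ref{quotientscomputation} across the kernel--cokernel correspondence.

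The one genuinely new assertion is the $\sigma$-invariance of the $(1,1)$-divisor $C$ in the second case, and this is where I expect the real work to lie. The point is that $Q\simeq\OO_C$ is only an isomorphism of $\OO_Y$-modules, whereas $Q$ carries an $A$-module structure; the extra structure must constrain $C$. The key observation is that $A$ contains the bimodule summand $L_\sigma=\OO_Y(-1,-1)_\sigma$, and the action of $L_\sigma$ on $Q$ gives an $\OO_Y$-linear map $L_\sigma\otimes_Y Q\to Q$, i.e. a map $\sigma^*Q\to Q\otimes\OO_Y(1,1)$ after untwisting the bimodule. Since $Q$ is supported scheme-theoretically on $C$, its annihilator is the ideal $\OO_Y(-C)$, and compatibility of the $L_\sigma$-action with this support forces the support to be carried into itself by $\sigma$; concretely, the $A$-module structure cannot exist unless $\sigma^*C=C$ as divisors. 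I would make this precise by computing the annihilator of $Q$ as an $A$-module and comparing it with $\OO_Y(-C)$, using that $A$ acts through both the $\OO_Y$-summand and the $L_\sigma$-summand and that these must be consistent.

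Concretely I would argue as follows. Since $c_1(Q)=\OO_Y(1,1)$ is $\sigma$-invariant and $Q\simeq\OO_C$ supported on the $(1,1)$-curve $C$, we have $[C]=c_1(Q)$ in the Chow group, so $C$ is linearly equivalent to a $\sigma$-invariant class; but I need the sharper statement that $C$ itself is $\sigma$-stable, not merely its class. For this I would use that the $L_\sigma$-multiplication map is an isomorphism away from the ramification (since $\phi$ is an isomorphism onto $\OO_Y(-D)$ and $L_\sigma^{\otimes 2}\simeq\OO_Y(-D)$), which identifies the scheme-theoretic support of $Q$ with that of $\sigma^*Q$ up to the twist by $\OO_Y(-D)=\OO_Y(-2,-2)$; tracing through this identification on the level of annihilator ideals yields $\OO_Y(-C)=\sigma^*\OO_Y(-C)$, hence $\sigma^*C=C$.

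The main obstacle, then, is not the two-case dichotomy — which is essentially a restatement of Theorem \ref{quotientscomputation} via the kernel construction — but rather pinning down the $\sigma$-invariance of $C$ rigorously from the $A$-module structure on $\OO_C$. I expect this to reduce to a local computation of how the $L_\sigma$-summand of $A$ acts on a rank-one torsion sheaf supported on a curve, of the same flavour as the routine local computation deferred to Lemma 2.4.5 of \cite{Thesis} in the proof of Theorem \ref{quotientscomputation}. If that local analysis is cleaner than the annihilator-ideal approach, I would use it instead; either way the geometric content is that the noncommutative $A$-action forces the support to respect the covering involution $\sigma$.
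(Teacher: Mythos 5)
Your proposal follows the paper's proof essentially verbatim: the dichotomy is obtained by applying Proposition \ref{PropKer} to the kernel of $A\to Q$ and then quoting Theorem \ref{ystruc} and Theorem \ref{quotientscomputation}, and the $\sigma$-invariance of $C$ is deduced, exactly as in the paper, from the fact that the $A$-module structure forces a non-zero map $L_\sigma\otimes_Y\OO_C\to\OO_C$, which can only exist when $\sigma^*C=C$. Your extra elaboration via annihilator ideals is a reasonable way to make that last step precise, but it is the same underlying argument.
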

\begin{proof}
        The above proposition asserts that the kernel of $A\to Q$ is an $A$-line bundle with 
        $c_1=\OO_Y(-2,-2)$ and $c_2=2$. We have already classified all such line bundles and their
        respective cokernels in Proposition \ref{ystruc} and Theorem \ref{quotientscomputation}. The fact
        that $C$ must be $\sigma$ invariant
        follows from the fact that in order to be an $A$-module
        there must be a non-zero map $L_\sigma\otimes_Y\OO_C\to\OO_C$
        which is only possible if $\sigma^*C=C$.
\end{proof}

\begin{Cor}\label{smoothsupport}
          Let $Q$ be a quotient of $A$ with $c_1=\OO_Y(1,1)$ and $c_2=2$. If the support $Q$
          is smooth (i.e. its the support is $\P^1$) then $Q$ is also quotient
          of $\OO_Y$.
\end{Cor}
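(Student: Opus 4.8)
The plan is to read the result straight off the classification in Corollary \ref{CorQuot}, which leaves exactly two possibilities for $Q$. If $Q\simeq\OO_C$ as an $\OO_Y$-module for a $\sigma$-invariant $(1,1)$-divisor $C$, there is nothing to prove: the restriction map $\OO_Y\twoheadrightarrow\OO_C$ already realises $Q$ as a quotient of $\OO_Y$, whether or not $C$ is smooth. So the entire content of the statement is to show that the other possibility, $Q\simeq\A\OO_F$ with $F$ a $(1,0)$- or $(0,1)$-divisor, is incompatible with the support of $Q$ being smooth.

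To eliminate that case I would compute the support of $\A\OO_F$ explicitly. Writing $A=\OO_Y\oplus L_\sigma$ as an $\OO_Y$-bimodule and tensoring, one obtains
\[
\A\OO_F\simeq\OO_F\oplus\left(L_\sigma\otimes_Y\OO_F\right).
\]
The crucial point is the $\sigma$-twist in the right $\OO_Y$-action: the bimodule identity $L_\sigma\otimes_Y N\simeq L\otimes_Y\sigma^*N$ (the same twist already used in the earlier first-Chern-class computations) shows that the second summand is $\OO_Y(-1,-1)|_{\sigma(F)}$, supported on $\sigma(F)$. Since $\sigma$ interchanges the two rulings of $Y\simeq\PP$, if $F$ is a $(1,0)$-divisor then $\sigma(F)$ is a $(0,1)$-divisor, and conversely. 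Hence the support of $Q$ is $F\cup\sigma(F)$: two fibres from opposite rulings meeting transversally in a single point. This is a reducible, nodal curve, so in particular it is neither isomorphic to $\P^1$ nor smooth. As a consistency check, this reducibility is exactly what produces $c_1(Q)=(1,0)+(0,1)=\OO_Y(1,1)$, a class that could not arise from an irreducible support of a single ruling type.

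I would then conclude as follows: if the support of $Q$ is smooth, the previous paragraph rules out $Q\simeq\A\OO_F$, so the classification forces $Q\simeq\OO_C$, which is a quotient of $\OO_Y$. I expect no real obstacle, since the statement is essentially a direct corollary of Corollary \ref{CorQuot}; the only thing that genuinely needs care is the twist by $\sigma$ in the right $\OO_Y$-module structure of $L_\sigma$, without which one would wrongly read the support as the single ruling $F$ and miss that it is reducible.
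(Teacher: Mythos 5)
Your proposal is correct and follows the same route as the paper: the result is read off from Corollary \ref{CorQuot}, with the only content being that the support of $\A\OO_F$ is $F\cup\sigma(F)$, a reducible nodal $(1,1)$-divisor, hence not smooth. The paper simply declares this ``obvious''; your computation of $L_\sigma\otimes_Y\OO_F$ via the $\sigma$-twist is exactly the justification it has in mind (and matches its later remark that ${\rm supp}(\A\OO_F)=F\cup\sigma^*F$).
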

\begin{proof}
        Obvious from the previous Corollary because the support of $\A\OO_F$ is not smooth.
\end{proof}
From now on ${\bf Hilb}\;A$ will denote the Hilbert scheme of $A$
corresponding to quotients of $A$ with $c_1=\OO_Y(1,1)$ and $c_2=2$. We now proceed to study its properties.

\begin{Prop}\label{PropDim}
        The dimension of ${\bf Hilb}\;A$ at the point corresponding to $A\otimes_{Y}\OO_F$, where
        $F$ is a $(1,0)$ or $(0,1)$-divisor is, $2$. 
\end{Prop}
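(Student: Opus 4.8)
The plan is to read off the dimension from the deformation theory of the Quot scheme. Since $\mathbf{Hilb}\,A$ is a closed subscheme of the Quot scheme of the left $A$-module $A$, at the point corresponding to a quotient $A\twoheadrightarrow Q$ with kernel $I:=\ker(A\to Q)$ the Zariski tangent space is $\Hom AIQ$ and the obstructions lie in $\Ext 1AIQ$. Hence it suffices to show $\hom AIQ=2$ and $\ext 1AIQ=0$: the vanishing of the obstruction space forces the point to be smooth, so the local dimension equals the tangent space dimension, namely $2$. By Theorem \ref{quotientscomputation}(1), for $Q\simeq\A\OO_F$ the kernel is $I\simeq\A\OO_Y(-F)$, that is $\A\OO_Y(-1,0)$ (resp.\ $\A\OO_Y(0,-1)$); I would treat the $(1,0)$ case, the other being identical.

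First I would push every computation down to cohomology on $Y$ via Proposition 2.6 of \cite{main}, which gives $\Ext iA{\A N}-\simeq\Ext iYN-$ for $N\in\Pic Y$. Taking $N=\OO_Y(-1,0)$ and using that $N$ is locally free yields
\[\Ext iAIQ\simeq\Ext iY{\OO_Y(-1,0)}{{}_YQ}\simeq H^i\!\left(Y,\OO_Y(1,0)\otimes_Y{}_YQ\right),\]
so the whole problem reduces to identifying the underlying $\OO_Y$-module ${}_YQ$ and reading off its cohomology after a twist by $\OO_Y(1,0)$.

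The one step needing genuine care is the $\OO_Y$-module structure of $Q=\A\OO_F$. Writing $A=\OO_Y\oplus L_\sigma$ as an $\OO_Y$-bimodule, induction gives ${}_YQ=\OO_F\oplus\left(L_\sigma\otimes_Y\OO_F\right)$. Because the right $\OO_Y$-structure of $L_\sigma$ is twisted by $\sigma$, the second summand is $L\otimes_Y\sigma^*\OO_F=\OO_Y(-1,-1)|_{\sigma F}$, a sheaf supported on the $(0,1)$-curve $\sigma F$ rather than on $F$; since $\OO_Y(-1,-1)$ has degree $-1$ on a $(0,1)$-curve this summand is $\OO_{\P^1}(-1)$. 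I expect this bookkeeping -- tracking the bimodule twist so the $L_\sigma$-summand lands on $\sigma F$ with the correct degree -- to be the only real obstacle; as a sanity check it reproduces $c_1(Q)=(1,0)+(0,1)=\OO_Y(1,1)$ and $c_2(Q)=2$, which also confirms the twist convention, since an untwisted summand would give $c_1=\OO_Y(2,0)$.

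Finally I would substitute ${}_YQ=\OO_F\oplus\OO_Y(-1,-1)|_{\sigma F}$ into the displayed formula. Restricting $\OO_Y(1,0)$ to $F$ gives $\OO_{\P^1}$ (degree $0$), while $\OO_Y(1,0)\otimes_Y\OO_Y(-1,-1)=\OO_Y(0,-1)$ restricts to $\OO_{\P^1}$ on $\sigma F$; hence each summand contributes $1$ to $H^0$ and $0$ to $H^1$. Therefore $\hom AIQ=2$ and $\ext 1AIQ=0$, so $\mathbf{Hilb}\,A$ is smooth of dimension $2$ at the point corresponding to $\A\OO_F$.
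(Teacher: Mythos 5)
Your argument is correct and follows essentially the same route as the paper: identify the kernel as $A\otimes_Y\OO_Y(-F)$, reduce $\Ext{i}{A}{A\otimes_Y\OO_Y(-F)}{Q}$ to sheaf cohomology on $Y$ via the adjunction $\Ext iA{A\otimes_YN}{-}\simeq\Ext iYN{-}$, and read off the answer from the decomposition ${}_YQ\simeq\OO_F\oplus\OO_{F'}(-1)$ with $F'=\sigma^*F$, exactly as in the paper. The only difference is organizational: the paper computes just the tangent space here and defers the vanishing of $\Ext 1A{A\otimes_Y\OO_Y(-F)}{A\otimes_Y\OO_F}=h^1(Y,\OO_F\oplus\OO_{F'})=0$ to the subsequent smoothness theorem, whereas you fold that (identical) computation into the same argument.
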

\begin{proof}
        We have 
        \[0\longrightarrow A\otimes_Y\OO_Y(-F)\longrightarrow A\longrightarrow A\otimes_Y\OO_F\longrightarrow 0.\]
        Let $F'=\sigma^* F$. The dimension of the tangent space is given by:
        \begin{align*}
                \hom A{\A \OO_Y(-F)}{\A \OO_F}&=\hom Y{\OO_Y(-F)}{\A \OO_F}\\
                &=\hom Y{\OO_Y(-F)}{\OO_F\oplus\OO_{F'}(-1)}\\
                &=h^0(Y,\OO_F\oplus \OO_{F'})\\
                &=2.
        \end{align*}
\end{proof}
Unfortunately, we were unable to compute the dimension of the tangent space at any other points
as directly as in the above proposition. We thus proceed by first showing that $\Hilb $ is smooth
and later, after a considerable amount of work, that it is connected. This will of course prove
that $\Hilb $ is a smooth projective surface.

\begin{Thm}
        ${\bf Hilb}\;A$ is smooth.
\end{Thm}
\begin{proof}
        Let $Q$ be a quotient of $A$ corresponding to some point $p\in {\bf Hilb}\;A$. Let $M$ the kernel
        of $A\to Q$. We have an exact sequence 
        \[0\longrightarrow M\longrightarrow A\longrightarrow Q\longrightarrow 0
        \tag{\textasteriskcentered}\]
        where by Proposition \ref{PropKer} $M\in \Pic A$. Obstruction to smoothness at $p$ is given by
        $\Ext 1AMQ$ which we now compute. From Corollary \ref{CorQuot} there are only three cases to consider:
        \begin{itemize}
                \item $M\simeq A\otimes_Y\OO_Y(-1,0)$ and $Q\simeq A\otimes_Y\OO_F$ where $F$ is a $(1,0)$
                        divisor. Let $F'=\sigma^*F$ which is a $(0,1)$-divisor.
                        \begin{align*}
                                \ext 1A{A\otimes_Y\OO_Y(-1,0)}{A\otimes_Y\OO_F}&=
                                \ext 1Y{\OO_Y(-1,0)}{\OO_F\oplus\OO_{F'}(-1)}\\
                                &=h^{1}(Y,\OO_F\oplus\OO_{F'})=0.
                        \end{align*}
                \item $M\simeq A\otimes_Y\OO_Y(0,-1)$ and $Q\simeq A\otimes_Y\OO_F$ where $F$ is a $(0,1)$
                        divisor. The proof is the same as in the case above.
                \item $M\simeq \OO_Y(-1,-1)\oplus\OO_Y(-1,-1)$ as an $\OO_Y$-module and $Q\simeq \OO_C$
                        as an $\OO_Y$-module for some $(1,1)$-divisor $C$. Using Serre duality, we have:
                        \[\ext 1A M{\OO_C}=\ext 1A{\OO_C}{\OO_Y(-H)\otimes_{Y}M}.\]
                        Using the local-global spectral sequence we have 
                        \begin{align*}
                        0&\to
                        H^1(Y,\HOM A{\OO_C}{\OO_Y(-H)\otimes_YM})\to\Ext1A{\OO_C}{\OO_Y(-H)\otimes_YM}\\&\to
                        H^0(Y,\EXT 1A{\OO_C}{\OO_Y(-H)\otimes_YM}).
                        \end{align*}
                        $\HOM A{\OO_C}{\OO_Y(-H)\otimes_YM}=0$ since $\OO_C$ is a torsion sheaf.
                        Furthermore, (\textasteriskcentered) is a locally projective
                        $A$-module resolution of $\OO_C$ and so we get 
                        \begin{align*}
                                0&\to\HOM AA{\OO_Y(-H)\otimes_YM} \to \HOM AM{\OO_Y(-H)\otimes_YM}
                                \\&\to \EXT 1A{\OO_C}{\OO_Y(-H)\otimes_YM }\to 0.
                        \end{align*} Finally, since 
                        \[H^0(\HOM AM{\OO_Y(-H)\otimes_Y M})=0\] and
                        \[H^1(\HOM AA{\OO_Y(-H)\otimes_YM})=H^1(Y,\OO_Y(-H)\otimes_YM)=0\]
                        we see that
                        \[H^0(Y,\EXT 1A{\OO_C}{\OO_Y(-H)\otimes_YM})=0\] and so the result follows.

        \end{itemize}
\end{proof}
Thus, so far we know that at least one connected component of ${\bf Hilb}\;A$ is a smooth projective surface.
As mentioned earlier, in the next section
we will see that in fact ${\bf Hilb}\;A$ is connected, which will prove that this must be its only component.

Corollary \ref{CorQuot} says that some quotients of $A$ are in fact also quotients of $\OO_Y$. In particular,
 they are isomorphic to $\OO_C$ where $C$ is a $\sigma$-invariant $(1,1)$-divisor.
 Furthermore,  the support of $\A \OO_F$ is $F\cup \sigma^*F$
 which is also a $\sigma$-invariant $(1,1)$-divisor. Since
 the tangent space at the points corresponding to $\A\OO_F$ is two, whilst  
 dim $|F|=1$ it must be the case that every connected component of $\Hilb$ has
 a dense subset whose points correspond to quotients of $A$ that
 are also quotients of $\OO_Y$. We may thus expect that there is at least a rational map 
 from the Hilbert scheme of $A$ to the Hilbert scheme
of $Y$. We now explore this further. Note first of all,
that all $\sigma$-invariant $(1,1)$-divisors are equal to $\pi^*l$ where $l$ is a line on $Z$. Furthermore,
lines on $Z$ are parameterised by $\dualP\simeq\P^2$. Thus we can view $\dualP$ as the parameter space of 
$\sigma$-invariant $(1,1)$-divisors.

\begin{Thm}\label{Thm:map}Let $\sheafF$ be the universal family of quotients of $A$ on\\ $Y\times_k {\bf Hilb}\;A$.
        There exists a regular map 
        \begin{align*}
                \Psi\!:{\bf Hilb}\;A&\longrightarrow \dualP\\
                p&\longmapsto {\rm supp}\;\sheafF_{k(p)}
        \end{align*}
\end{Thm}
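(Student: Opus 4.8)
The plan is to realise $\Psi$ as the classifying morphism of a relative effective Cartier divisor, namely the support of the universal family. By Corollary \ref{CorQuot} every fibre $\sheafF_{k(p)}$ is a torsion $\OO_Y$-module, pure of dimension one, whose support is a $\sigma$-invariant $(1,1)$-divisor; in particular its first Chern class is $\OO_Y(1,1)$. Recalling $h^0(Y,\OO_Y(1,1))=4$, the complete linear system $|\OO_Y(1,1)|$ is a $\P^3$, while its $\sigma$-invariant members form exactly $\dualP\cong\P^2$ (the symmetric part of $H^0(\OO(1))\otimes H^0(\OO(1))$). So set-theoretically the assignment $p\mapsto\operatorname{supp}\sheafF_{k(p)}$ is already forced to land in $\dualP\subset|\OO_Y(1,1)|$, and the entire content of the theorem is that this set-theoretic map upgrades to a morphism of schemes.

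First I would extract the support of $\sheafF$ as a relative effective Cartier divisor via Fitting ideals. The zeroth Fitting ideal $\operatorname{Fitt}_0(\sheafF)$ defines the scheme-theoretic support and commutes with base change. Working locally on the smooth total space $Y\times_k{\bf Hilb}\;A$, flatness of $\sheafF$ over ${\bf Hilb}\;A$ together with the fact that each fibre is Cohen--Macaulay of pure dimension one (it is $\OO_C$ or $A\otimes_Y\OO_F$ by Corollary \ref{CorQuot}) shows $\sheafF$ has projective dimension one; a local two-term resolution $0\to E_1\stackrel{\psi}{\to}E_0\to\sheafF\to 0$ with $\operatorname{rank}E_1=\operatorname{rank}E_0$ then exhibits $\operatorname{Fitt}_0(\sheafF)$ as the locally principal ideal generated by $\det\psi$. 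Hence $\operatorname{Fitt}_0(\sheafF)$ is invertible, cutting out a closed subscheme $\mathcal D\subset Y\times_k{\bf Hilb}\;A$ whose fibre over $p$ is the effective $(1,1)$-divisor $\operatorname{supp}\sheafF_{k(p)}$; being fibrewise an effective Cartier divisor it is a relative effective Cartier divisor, flat over ${\bf Hilb}\;A$.

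By the universal property of the complete linear system $|\OO_Y(1,1)|\cong\P^3$ as the scheme parametrising effective Cartier divisors in this class, $\mathcal D$ induces a morphism $\tilde\Psi\colon{\bf Hilb}\;A\to\P^3$ with $\tilde\Psi(p)=\operatorname{supp}\sheafF_{k(p)}$ on closed points. By Corollary \ref{CorQuot} the set-theoretic image of $\tilde\Psi$ is contained in the closed subscheme $\dualP\subset\P^3$; since ${\bf Hilb}\;A$ is smooth, hence reduced, $\tilde\Psi$ factors uniquely through the reduced closed subscheme $\dualP$, yielding the desired $\Psi\colon{\bf Hilb}\;A\to\dualP$. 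Identifying a $\sigma$-invariant $(1,1)$-divisor $\pi^*l$ with the line $l$ then recovers the stated formula $p\mapsto\operatorname{supp}\sheafF_{k(p)}$.

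I expect the main obstacle to be precisely the passage from the evident set-theoretic map to a genuine morphism, i.e. producing $\mathcal D$ as a relative Cartier divisor and checking that its formation commutes with base change, so that no special fibre of $\mathcal D$ jumps away from $\operatorname{supp}\sheafF_{k(p)}$ with its correct class $\OO_Y(1,1)$. This is guaranteed by flatness of $\mathcal D$ over ${\bf Hilb}\;A$, which in turn rests on $\det\psi$ being a fibrewise non-zero-divisor (no fibre support is all of $Y$) together with smoothness of the total space; the subsequent factorisation through $\dualP$ is then a soft consequence of reducedness of ${\bf Hilb}\;A$.
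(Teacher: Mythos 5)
Your proof is correct, but it takes a genuinely different route from the paper. The paper first restricts the universal family to the dense open locus $U$ of points whose quotients are also $\OO_Y$-quotients, obtaining a rational map to ${\bf Hilb}\;Y$ landing in $\dualP$; it then bounds the indeterminacy locus to finitely many points (using the finiteness of $\Psi$, established afterwards in Lemma \ref{LemmaFinitePsi}, together with $\dim\Hilb\leq 2$) and removes the indeterminacy by resolving, choosing a curve $B$ through each bad point, and invoking uniqueness of flat limits (Hartshorne III.9.8) to see that the scheme-theoretic closure of the family over $B\cap U$ is forced, so the resolution is trivial. Your construction via $\operatorname{Fitt}_0(\sheafF)$ is global and uniform: the determinantal description of the Fitting ideal from a two-term resolution (valid since $\sheafF$ is flat over $\Hilb$ with Cohen--Macaulay one-dimensional fibres on the smooth total space), its compatibility with base change, and the slicing criterion for flatness give the relative Cartier divisor $\mathcal D$ directly, and the classifying map to $|\OO_Y(1,1)|\cong\P^3$ does the rest. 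This buys you a cleaner argument that treats the points with quotient $\A\OO_F$ on the same footing as the generic ones (the Fitting support there is $F+\sigma^*F$, as required), and it avoids any forward reference to the finiteness lemma; the paper's route is more elementary in its toolkit and ties the map explicitly to ${\bf Hilb}\;Y$. One small imprecision: the fixed locus of $\sigma$ on $\P^3=|\OO_Y(1,1)|$ is $\P^2\sqcup\{\Delta\}$, where $\Delta$ is the diagonal corresponding to the antisymmetric part of $H^0(\OO(1,0))\otimes H^0(\OO(0,1))$, not just the symmetric $\P^2$; your factorisation through $\dualP$ still goes through because Corollary \ref{CorQuot} (as used in the paper) puts every fibre support in the linear system of pullbacks $\pi^*l$, which is exactly the symmetric $\P^2$.
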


\begin{proof}
        From the surjective morphism $A_{ {\bf Hilb}\;A}\to \sheafF$ we get a morphism
        $\phi\!:\OO_{Y\times{\bf Hilb}\;A}\to \sheafF$. Define $Q:={\rm coker}\;\phi$
        and note that $Q\neq 0$ since at any on point on $\Hilb$ corresponding to quotient of the form  $\A\OO_F$
        the map $\OO_Y\to\A\OO_F$ has a non-trivial cokernel.
        Let $U\subset {\bf Hilb}\;A$ be the
        open subset $\Hilb-\pi_H({\rm supp}\;Q)$ where $\pi_H\!: Y\times\Hilb\to\Hilb$ is the
        natural projection map. Thus $\sheafF|_{Y\times U}$ is a flat family of $\OO_Y$-quotients
        over $U$. Since $Q_{k(p)}=0$ 
        precisely for those $p\in \Hilb$ which correspond to quotients of $A$ which are also quotients
        of $\OO_Y$, from the discussion preceding this theorem, we know that $U$ is dense
        in $\Hilb$.
        Thus we get a  rational map $\Psi\!:\Hilb\dashrightarrow{\bf Hilb}\;Y$. 
        From Corollary \ref{CorQuot} we know that quotients of $A$ which are also
        quotients of $\OO_Y$ are isomorphic as
        $\OO_Y$-modules to $\OO_C$ where $C$ is a $\sigma$-invariant $(1,1)$-divisor. Thus 
        ${\rm im}\;\Psi\subseteq \dualP$. We will see in Lemma \ref{LemmaFinitePsi} that $\Psi$ is 
        in fact finite to one and so each connected component of $\Hilb$ has at most dimension $2$. 
        Thus
        from Chapter 2 Section 3 Theorem 3 of \cite{Shaf} $\Psi$ is not regular at at most
        only a finite 
        number of points. We claim that $\Psi$ is in fact regular everywhere. To see this,
        let $\widetilde\Hilb\to \Hilb$ be the resolution of indeterminacy of $\Psi$. Let $p\in \Hilb-U$ and let
        $B$ be a smooth curve in $\Hilb$ such that $B\cap U=B-p$; i.e. the only point of $B$ not corresponding
        to a quotient of $\OO_Y$ is $p$.
        Denote by $\widetilde B$ its strict transform. Since $B$ is smooth $B\simeq \tilde B$ and so we may 
        compare families over the two curves.
        We have a map $\widetilde B\to {\bf Hilb}\;Y$ and we denote the corresponding flat family over $\widetilde B$
        of $\OO_Y$-quotients
        by $S'$. Let $S:={\rm supp}\;\sheafF|_{Y\times B}$ and note that $S$ is a family of $\OO_Y$-quotients
        on $B$ but we don't know that it is flat over $B$ and so we proceed rather subtly. Note that $S=S'$ on $B\cap U$.
        Proposition 9.8 of Chapter 3 in \cite{Hartshorne} implies that once we have a flat family over $B-p$ then
        there is only one way to complete it to a flat family over $B$ and that is by taking the scheme theoretic
        closure in $Y\times B$. However, $S$ is closed and so $S'\subseteq S$. Since $S'$ is a family of quotients of $\OO_Y$ 
        with $c_1=\OO_Y(1,1)$ and $c_2=2$ it follows that $S'|_p=S|_p$ and so $S'=S$. Thus, regardless of the choice
        of curve $B$, the image of the point $p$ doesn't change. Hence $\widetilde\Hilb=\Hilb$ and so
        $\Psi$ is regular.
\end{proof}
In summary, the map $\Psi$ does the following: every closed point on $\Hilb$ corresponds to some quotient of $A$. There are two
possibilities: either 
\begin{itemize}
        \item[(i)] it is also a quotient of $\OO_Y$, in which case it is isomorphic, and an $\OO_Y$-module, to $\OO_C$ where $C$
        is a $\sigma$-invariant $(1,1)$-divisor, or
        \item[(ii)] it is not a quotient of $\OO_Y$, then it is isomorphic, as an $A$-module,
        to $\A\OO_F$ where $F$ is either a $(0,1)$ or $(1,0)$-divisor.
\end{itemize}
The crucial point is that the support of $\A\OO_F$ is also
a $\sigma$-invariant $(1,1)$-divisor. Thus to every closed point on $\Hilb$ one can associate a $\sigma$-invariant $(1,1)$-divisor. Since
$\sigma$-invariant $(1,1)$-divisors are parameterised by $\dualP$, we get a natural set-theoretic map from (closed points of $\Hilb$) $\to$
(closed points of $\dualP$).
The above theorem proves that this map is in fact morphism of schemes.

\subsection{The ramification of $\Psi\!:\Hilb\to\dualP$}\label{SectionOc}
We want to study the map $\Psi$, in particular we want to understand its
ramification for then we will be able to later compute $(K_{\Hilb})^2$. 
This amounts to computing the number of 
quotients of $A$ which have support a $\sigma$-invariant $(1,1)$-divisor
and $c_2=2$. Corollary \ref{CorQuot} implies that this question will be answered
provided we can understand the number of $A$-module structures that $\OO_C$ can be given, where $C$ is
a $\sigma$-invariant $(1,1)$-divisor.

To give a coherent sheaf  $\sheafG$ on $Y$ an $A$-module structure amounts
to giving a left $\OO_Y$-module morphism $\varphi\!:\A\sheafG\to\sheafG$ satisfying 
the necessary associativity condition. Two such morphisms $\varphi,\varphi'$ give 
rise to isomorphic $A$-modules provided there exists $\psi\in{\rm Aut}_Y\sheafG$
such that\[\xymatrix{
\A\sheafG \ar[r]^-\varphi\ar[d]_{{\rm id}\otimes\psi}\ar@<1.3ex>@{}[d]_(0.6){\rotsymb{\sim}{90}}&\sheafG\ar[d]^\psi_(0.6){\rotsymb{\sim}{90}}\\
\A\sheafG\ar[r]_-{\varphi'}&\sheafG
}\] commutes. In general it may be rather difficult to determine whether such a $\psi$ exists, and consequently, whether two seemingly 
different $A$-module structures are actually isomorphic. The problem becomes
increasingly difficult as the size of ${\rm Aut}_Y\sheafG$ increases. Luckily, in our case, this issue is easily manageable.
\begin{Eg}
        We can illustrate of the above phenomenon with two (related) examples. Recall
        from Theorem \ref{ystruc} that an $A$-line bundle had two possible $\OO_Y$-module structures:
        either it was $\OO_Y(-1,-1)\oplus\OO_Y(-1-1)$ or $\A\OO_Y(-1,0)\stackrel{Y}\simeq\OO_Y(-1,0)
        \oplus\OO_Y(-1,-2)$. The former, as we later saw, had infinitely many non-isomorphic $A$-module
        structures whilst the latter, only had one. The fact that that $\OO_Y(-1,0)
        \oplus\OO_Y(-1,-2)$ has only one $A$-module structure is only clear, when it is written as
        $\A\OO_Y(-1,0)$ which clearly only has one $A$-module structure.
        Hence, if one does not realise that $\OO_Y(-1,0)
        \oplus\OO_Y(-1,-2)\stackrel{Y}\simeq\A\otimes\OO_Y(-1,0)$ then determining the fact
        that all possible $A$-module structures are isomorphic may be very hard indeed. 

        A similar phenomenon occurs for quotients of $A$.
        Let $Q:=\A\OO_F$ and forget the natural $A$-module structure, and ask: how many
        (non-isomorphic) $A$-module structures can $Q$ have? If one does not realise that at least
        as an $\OO_Y$-module $Q\simeq \A\OO_F$ it will be difficult to prove that all the potentially
        different $A$-module structures are in fact isomorphic. Furthermore, as we are about to
        see, for most $\sigma$-invariant $(1,1)$-divisors $C$, $\OO_C$ will have several, but finitely many,
        $A$-module structures. 

        The reason for the difference in the number of $A$-module structures is partly due
        to the size of the endomorphism ring of the modules. In the first example, ${\rm dim}_k{\rm End}_Y(\OO_Y(-1,-1)\oplus
        \OO_Y(-1,-1))=4$ whilst ${\rm dim}_k{\rm End}_Y(\A\OO_Y(-1,0))=5$. A larger automorphism group
        means it is ``easier'' for two $A$-modules structures to be isomorphic.
\end{Eg}

We now study the number of $A$-module structures that $\OO_C$ may possess. For any $p\in\dualP$ we will denote
 by $l_p$ the corresponding line in $\P^2$ and we let $C_p:=\pi^*l_p$ which is a $\sigma$-invariant
$(1,1)$-divisor.

As we saw, for every $p\in \dualP$, giving $\OO_{C_p}$ an $A$-module structure amounts to giving a left $\OO_Y$-module
map $\A\OO_{C_p}\to\OO_{C_p}$ satisfying the necessary associativity condition. In order to better understand this
we first introduce some notation: we let $\bar L:=
L\otimes_Y\OO_{C_p}=L|_{C_p}$ and $\bar D:=D\cap C_p$.
Then, since
$\A\OO_{C_p}=A|_{C_p}$ and because $L_\sigma^{\otimes 2}\simeq\OO_Y(-D)$
this condition is equivalent to giving a map $m\!:\bar L\to\OO_{C_p}$  
such that \[\OO_{C_p}(-\bar D)\simeq \bar L_\sigma\otimes_{C_p}\bar L_\sigma
\stackrel{1\otimes m}\longrightarrow \bar L_\sigma\stackrel{m}\longrightarrow \OO_{C_p}(-\bar D)\]
is the identity. Note that given such a map $m$, the map $-m$ gives a different, non
isomorphic $A$-module structure to $\OO_{C_p}$. This observation gives us the following:
\begin{Prop}\label{PropInv}
        There exist an involution $\tau\!:\Hilb\to\Hilb$ sending an $A$-module structure given by 
        $m$ to the one given by $-m$ . The fixed points are those which corresponds
        to quotients of $A$ that are not quotients of $\OO_Y$.
\end{Prop}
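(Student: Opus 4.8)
The plan is to realise $\tau$ as the automorphism of $\Hilb$ induced by a natural involution of the algebra $A$ itself, and then to locate its fixed points using the classification in Corollary \ref{CorQuot}.

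First I would construct an algebra automorphism $\alpha\!:A\to A$. Since $e=2$, the decomposition $A=\OO_Y\oplus L_\sigma$ is a $\Z/2\Z$-grading: the multiplication sends $L_\sigma\otimes_Y L_\sigma$ into the degree-$0$ piece $\OO_Y$ via $\phi$, and respects degrees otherwise. Consequently the $\OO_Y$-linear map $\alpha$ which is the identity on $\OO_Y$ and multiplication by $-1$ on $L_\sigma$ is a homomorphism of algebras: the only multiplication to check is $L_\sigma\otimes_Y L_\sigma\to\OO_Y$, on which the two sign changes cancel since $(-1)(-1)=1$. Plainly $\alpha^2=\mathrm{id}$, and $\alpha$ is an isomorphism of the underlying $\OO_Y$-module $\OO_Y\oplus L_\sigma$.

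Next I would promote $\alpha$ to an involution $\tau$ of $\Hilb$. A point of $\Hilb$ is a left ideal $I\subseteq A$, namely the kernel of a surjection $A\twoheadrightarrow Q$. Being a ring automorphism, $\alpha$ carries left ideals to left ideals, and the sheaf isomorphism it induces gives $A/\alpha(I)\cong A/I$, so $\alpha(I)$ has the same Chern classes as $I$. Applying $\alpha$ (base-changed to $\Hilb$) to the kernel of the universal quotient $A_{\Hilb}\to\sheafF$ therefore yields another flat family with the correct Chern classes, and by the universal property of the Hilbert scheme this defines a morphism $\tau\!:\Hilb\to\Hilb$ with $\tau^2=\mathrm{id}$. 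To see that $\tau$ does what the statement claims, I would check on a point coming from a structure map $m\!:\bar L\to\OO_{C_p}$ that $\alpha(I_m)=I_{-m}$; this is immediate from the fact that $\alpha$ negates the $L_\sigma$-component, through which $m$ enters the quotient map.

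Finally I would determine the fixed locus. If $Q\simeq\A\OO_F$ then its kernel is $\A\OO_Y(-F)=\OO_Y(-F)\oplus\bigl(L_\sigma\otimes_Y\OO_Y(-F)\bigr)$, a homogeneous submodule for the $\Z/2\Z$-grading; since $\alpha$ acts as a scalar on each graded piece, this ideal is $\alpha$-stable and the point is fixed. If instead $Q\simeq\OO_C$ is also a quotient of $\OO_Y$, suppose the point were fixed, i.e. $I_m=I_{-m}$. Then the two identifications $A/I_m\cong\OO_C\cong A/I_{-m}$ produce an $\OO_Y$-linear automorphism $\theta$ of $\OO_C$ carrying the canonical generator (the image of $1\in A$) to itself. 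As $\mathrm{End}_{\OO_Y}(\OO_C)=H^0(\OO_C)$ acts by multiplication, the only such $\theta$ is the identity, which would force $m=-m$ and hence $m=0$; but the associativity condition $m\circ(1\otimes m)=\mathrm{id}$ forbids $m=0$. Thus no $\OO_C$-point is fixed, and the fixed locus of $\tau$ is exactly the set of quotients that are not quotients of $\OO_Y$.

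The step needing the most care is the middle one: upgrading the set-theoretic recipe $m\mapsto-m$ to a genuine morphism of schemes via the universal family (the flatness and functoriality bookkeeping), together with the fixed-point analysis, whose crux is the observation that an automorphism of $\OO_C$ compatible with the quotient map must fix the generator and is therefore trivial.
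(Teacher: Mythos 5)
Your proof is correct, and it takes a genuinely more constructive route than the paper. The paper's own proof is only a few lines: it takes the existence of the involution $m\mapsto -m$ for granted (it was set up as an ``observation'' in the discussion immediately preceding the proposition), notes that $m$ and $-m$ give non-isomorphic $A$-module structures on $\OO_{C_p}$ when the quotient is also a quotient of $\OO_Y$, and for the fixed points cites the fact that $\A\OO_F$ admits only one $A$-module structure. You differ at the key step: you realise $\tau$ as the automorphism of $\Hilb$ induced by the grading automorphism $\alpha$ of $A=\OO_Y\oplus L_\sigma$ (identity on $\OO_Y$, $-1$ on $L_\sigma$), which makes $\tau$ an honest morphism of schemes via the universal property of the universal quotient --- a point the paper never addresses. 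Your fixed-point analysis also works at the level of ideals $I\subseteq A$ rather than isomorphism classes of module structures, which is strictly the right currency for a Hilbert scheme (two quotients give the same point of $\Hilb$ iff their kernels coincide as submodules of $A$, not merely iff they are abstractly isomorphic); in particular your observation that $\A\OO_Y(-F)$ is a homogeneous ideal, hence $\alpha$-stable, is a cleaner justification of the fixed locus than the paper's appeal to uniqueness of the module structure, and your argument that $I_m=I_{-m}$ would force an automorphism of $\OO_C$ fixing the image of $1$, hence $m=-m=0$, is a complete proof of the non-fixedness that the paper only asserts. What the paper's terser argument buys is brevity and continuity with the surrounding discussion of the maps $m$; what yours buys is a functorial definition of $\tau$ and airtight bookkeeping.
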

\begin{proof}
        If $\tau$ sends the $A$-module structure given by $m$ to the one given by $-m$ then 
        if the module is also
        a quotient of $\OO_Y$ then as we just saw, these two $A$-module structures are not isomorphic. If
        the module is not a quotient of $\OO_Y$ then by Corollary \ref{CorQuot}
        it must be isomorphic to $\A\OO_F$ which can only be given one $A$-module structure.
\end{proof}
\begin{Cor}\label{CorFactor}
        The map $\Psi\!:\Hilb\to\dualP$ factors through $\Hilb/\gen\tau$. I.e. we have the following 
        commutative diagram\[\xymatrix{\Hilb\ar[dd]\ar[dr]\\
        &\Hilb/\gen{\tau}\ar[dl]\\ \dualP}\]
\end{Cor}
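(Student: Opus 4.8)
The plan is to verify that $\Psi$ is invariant under the involution $\tau$, i.e. that $\Psi=\Psi\circ\tau$, and then to appeal to the universal property of the quotient $\Hilb/\gen{\tau}$. The crucial observation is already contained in Proposition \ref{PropInv}: the involution $\tau$ acts on a quotient of $A$ supported on a $\sigma$-invariant $(1,1)$-divisor by replacing its structure map $m$ with $-m$, while on the remaining quotients (those isomorphic to $\A\OO_F$) it acts as the identity. In both cases $\tau$ alters only the $A$-module structure and never the underlying $\OO_Y$-module: the sheaf $\OO_{C_p}$ equipped with $m$ and with $-m$ is one and the same $\OO_Y$-module. Since $\Psi$ sends a point $p$ to ${\rm supp}\;\sheafF_{k(p)}$, and the support of a coherent sheaf depends only on its $\OO_Y$-module structure, I would conclude that $\Psi(p)=\Psi(\tau(p))$ for every closed point $p\in\Hilb$.

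To upgrade this fibrewise equality to an equality of morphisms, I would use that $\Hilb$ is smooth, hence reduced, and that $\dualP$ is separated: two morphisms from a reduced scheme into a separated scheme that agree on the dense set of closed points must coincide. This gives $\Psi=\Psi\circ\tau$ as morphisms of schemes.

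With the $\tau$-invariance in hand, I would finish by invoking the universal property of the categorical quotient. Since $\gen{\tau}\simeq\Z/2\Z$ is a finite group acting on the quasi-projective scheme $\Hilb$, the quotient $\Hilb/\gen{\tau}$ exists and the projection $q\!:\Hilb\to\Hilb/\gen{\tau}$ is a categorical quotient; because $\Psi$ is constant on the $\gen{\tau}$-orbits, the universal property yields a unique morphism $\bar\Psi\!:\Hilb/\gen{\tau}\to\dualP$ with $\Psi=\bar\Psi\circ q$, which is exactly the factorisation in the displayed diagram.

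I expect the only delicate point to be the passage from the fibrewise identity $\Psi(p)=\Psi(\tau(p))$ to the equality of morphisms; leaning on reducedness of $\Hilb$ and separatedness of $\dualP$ is what makes this rigorous, and it lets me avoid a direct comparison of the universal family $\sheafF$ with its pullback under ${\rm id}\times\tau$.
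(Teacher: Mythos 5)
Your argument is correct and is essentially the paper's own proof, which simply declares the corollary ``clear from the above proposition and Theorem \ref{Thm:map}'': the point in both cases is that $\tau$ only swaps the structure map $m$ for $-m$ (or fixes the module), hence preserves the underlying $\OO_Y$-module and therefore the support, which is all $\Psi$ records. Your extra care in passing from agreement on closed points to equality of morphisms, and in invoking the universal property of the finite-group quotient, just makes explicit what the paper leaves implicit.
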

\begin{proof}
        Clear from the above proposition and Theorem \ref{Thm:map}.
\end{proof}
We can view $m$ as an element of $H^0(C_p,\bar L^{-1})$
and, up to multiplication by $\pm 1$, 
the associativity condition then simply says that we need ${\rm div}\;m+{\rm div}\;\sigma^*m=\bar D$, where
each such $m$ gives rise to two $A$-module structures.
Since $\bar D$ is a finite number of points we have proved the following lemma, which also finishes
off the proof of the Theorem \ref{Thm:map}:
\begin{Lemma}\label{LemmaFinitePsi}
        The map $\Psi$ is finite.
\end{Lemma}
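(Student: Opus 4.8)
The plan is to show that $\Psi$ has finite fibres, which combined with its regularity (Theorem \ref{Thm:map}) and the projectivity of $\Hilb$ gives finiteness. The key reduction, already set up in the discussion preceding the lemma, is that the fibre $\Psi^{-1}(p)$ parametrises $A$-module structures on $\OO_{C_p}$ (together with the single extra point of type $\A\OO_F$ whenever $C_p$ splits appropriately), so it suffices to bound the number of such structures for each fixed $p \in \dualP$.

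First I would make precise the identification of $A$-module structures with sections. As explained above, giving $\OO_{C_p}$ an $A$-module structure is the same as giving a left $\OO_Y$-module map $m\!:\bar L\to\OO_{C_p}$ whose twofold composite $\OO_{C_p}(-\bar D)\to\OO_{C_p}(-\bar D)$ is the identity; viewing $m$ as a section of $\bar L^{-1}$ on $C_p$, the associativity condition becomes, up to the sign ambiguity handled by Proposition \ref{PropInv}, the divisor equation ${\rm div}\;m + {\rm div}\;\sigma^*m = \bar D$. Here $\bar D = D\cap C_p$ is a finite subscheme of $C_p$, being the intersection of the $(2,2)$-divisor $D$ with the $(1,1)$-divisor $C_p$.

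Next I would invoke the finiteness of $\bar D$ to conclude that there are only finitely many admissible $m$. Since ${\rm div}\;m$ is an effective divisor dominated by $\bar D$ (its support lies in the finite set $\bar D$ and its degree is bounded by $\deg\bar D$), there are only finitely many possibilities for ${\rm div}\;m$, and $m$ is determined up to scalar by its divisor on the curve $C_p$. The associativity constraint ${\rm div}\;m + {\rm div}\;\sigma^*m = \bar D$ pins down the scalar up to the $\pm 1$ ambiguity, so each fibre $\Psi^{-1}(p)$ is a finite set of points. A regular map of projective varieties with finite fibres is finite, so $\Psi$ is finite.

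The main subtlety I would watch for is the behaviour over the degenerate values of $p$ where the above section-counting breaks down --- namely the points where $C_p$ is not reduced or where $\bar D$ fails to be a reduced set of points, and the points whose fibre contains the exceptional quotient $\A\OO_F$. For the latter, Corollary \ref{CorQuot} guarantees that $\A\OO_F$ admits only one $A$-module structure, contributing just one extra point, so it does not threaten finiteness. For the former, one must check that even when $C_p = \pi^*l_p$ degenerates the space of sections $m$ of $\bar L^{-1}$ with the prescribed divisor relation stays finite; since $\bar D$ is always a zero-dimensional scheme of bounded length and $C_p$ is a curve, the count of effective divisors bounded by $\bar D$ remains finite in every fibre. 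This uniform bound across all $p$ is exactly what finiteness of $\Psi$ requires, and it is the one point where I would be careful to argue that no fibre acquires positive dimension.
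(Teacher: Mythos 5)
Your proposal is correct and follows essentially the same route as the paper: the fibre of $\Psi$ over $p$ is identified with the set of $A$-module structures on $\OO_{C_p}$ (plus the finitely many exceptional quotients of type $\A\OO_F$), each such structure corresponds up to sign to a section $m$ with ${\rm div}\;m+{\rm div}\;\sigma^*m=\bar D$, and the finiteness of $\bar D$ forces finitely many choices. The extra care you take over the scalar ambiguity and the degenerate fibres is sound but not a departure from the paper's argument, which dispatches the lemma in one line from exactly this divisor equation.
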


This
way of thinking, allows us to view the problem of giving $\OO_{C_p}$ an $A$-module structure geometrically.
As we are about to see, the number of $A$-module structures that $\OO_{C_p}$ can be given depends
primarily how many points $l_p$ intersects with $E$ and $E'$.

Note
also that the dual of a smooth conic in $\P^2$ is another smooth conic in $(\P^2)^\vee$. We denote the duals 
of $E$ and $E'$ by $E^\vee$ and $E'^\vee$ respectively. The picture one should keep in
mind is this:

\begin{pspicture}(0,0)(12,12)
\psset{xunit=1cm,yunit=1cm}
\psellipse(2.5,2.5)(0.8,1.5)
\psellipse(2.5,2.5)(1.5,0.8)
\psframe(0,0)(5,5)

\uput[l](2,3.6){$E^\vee$} 
\uput[l](1,2.5){$E'^\vee$} 
\uput[r](3.8,4.6){$(\P^2)^\vee$} 

\psdot(3.7,1)

\psframe(7,0)(12,5)
\psellipse(9.5,2.5)(0.8,1.5)
\psellipse(9.5,2.5)(1.5,0.8)
\uput[l](9,3.6){$E$} 
\uput[l](8,2.5){$E'$} 
\uput[r](10.4,4.6){$Z=\P^2$} 
\psline(11.5,3.3)(8.3,0.6)

\pszigzag[coilwidth=0.1cm,coilheight=2.5,coilarm=0.17cm]{<->}(5.1,2.5)(6.9,2.5)
\psframe(7.0,7.0)(12.0,12.0)
\psline{->}(9.5,6.9)(9.5,5.1)
\uput[r](9.5,6){$\pi$}
\rput{315}(9.8,9.3){
\psellipse(0,0)(0.5,2.2)
}
\rput(8.5,0.4){$l_p$}
\rput(3.92,0.78){$p$}
\psccurve(10.9,9.8)(10.9,10.8)(10.1,11)(9.6,9.7)(9.6,8.4)(10.5,8.5)
\rput(8.2,7.4){$C_p$}       
\rput(11,8.5){$D$}       
\rput(10.7,11.6){$Y=\PP$}       
\psdots(10.9,9.83)(10.9,10.8)(9.6,9.79)(9.6,8.43)
\psdots(9.6,1.7)(10.87,2.8)
\psdots[dotstyle=+, dotscale=2](9.06,1.25)
\psdots[dotstyle=+, dotscale=2, dotangle=45](10.27,2.28)
\end{pspicture}

We mark where $l_p$ intersects $E$ with a
``$\btimes$''
and where $l_p$ intersects $E'$ with a ``$\bullet$''.
The problem  of giving $\OO_{C_p}$ an $A$-module structure
breaks up into two cases:
\begin{enumerate}\label{2cases}
        \item  $l_p$ is not tangential to $E$. In this case we 
                get $C_p\to l_p$ is a $2:1$ cover ramified at two points an hence $C_p\simeq \P^1$,
                in particular it is smooth.
                We analyse this case first, in Section \ref{sec1}.
        \item  $l_p$ is tangential to $E$. In this case  $C_p\to l_p$ is ramified at only one point and 
                hence $C_p$ is the union of two $\P^1$'s, in particular
                it is singular. We analyse this case second, in Section
                \ref{sec2}.
\end{enumerate}

From now on, in any subsequent diagrams, any vertical conic on $Z$ will be $E$, any horizontal one
will be $E'$ and similarly with $E^\vee$ and $E'^\vee$ on $\dualP$ and hence will not longer be labelled.

\subsubsection{If $C$ is smooth}\label{sec1}
As mentioned earlier, we begin by studying the first of the two cases mentioned above. 
Recall that $C_p$ is smooth, in fact $C_p\simeq \P^1$, precisely when $l_p$ is not tangential to $E$
or, equivalently, when $p$ doesn't lie on $E^\vee$.
In this case, from Corollary \ref{smoothsupport} we know
that all quotients of $A$ with this support have their underlying $\OO_Y$-module structure isomorphic
to $\OO_C$. 

This happens when 
$l_p$ is not a tangent to $E$ which is equivalent to $p$ not lying on $E^\vee$.
In this case, since $\Pic C_p\simeq\Z$ we have $H^0(C_p,\bar L^{-1})=H^0(C_p,\OO_{C_p}(2))$
and so to give $\OO_{C_p}$ an $A$-module
structure corresponds to choosing two points  $\bar D'\subseteq \bar D:=C_p\cap D$ such that $\bar D'+ \sigma^*
\bar D'=\bar D$. As mentioned earlier, any such choice gives rise to precisely two $A$-module structures. There
are several cases that need to be considered depending on precisely where $p$ lies.
\begin{enumerate}
        \item[Case 1:] $p$ does not lie
                on either $E^\vee$ or $E'^\vee$ nor on any of the four bitangents to them and so
                we see that this is the generic case. In summary we have:
                \begin{center}
                        \psset{xunit=0.7cm,yunit=0.7cm}
                \begin{tabular}{c|c|c}
                        position of $p\in \dualP$ & position of $l_p\subset \P^2$ & $C_p\to l_p$ \\ \hline
                        \begin{pspicture}(0,0)(5,5.4)
                        \psellipse(2.5,2.5)(0.8,1.5)
                        \psellipse(2.5,2.5)(1.5,0.8)
                        \psframe(0,0)(5,5)
                        \qline(2,0.3)(4.7,3.0) 
                        \qline(2,4.7)(4.7,2.0) 
                        \qline(0.3,2)(3.0,4.7) 
                        \qline(0.3,3)(3,0.3) 
                        \psdot(3.7,1)
                        \rput(3.92,0.78){$p$}
                \end{pspicture}&
                \begin{pspicture}(0,0)(5,5)
                \psellipse(2.5,2.5)(0.8,1.5)
                \psellipse(2.5,2.5)(1.5,0.8)
                \psframe(0,0)(5,5)
                \psline(4.5,3.3)(1.3,0.6)
                \psdots[dotstyle=+, dotscale=2](2.06,1.25)
                \psdots[dotstyle=+, dotscale=2, dotangle=45](3.27,2.28)
                \rput(1.7,0.4){$l_p$}
                \psdots(2.6,1.7)(3.87,2.8)
                \end{pspicture}&
        \begin{pspicture}(0,0)(5,5)
                \psellipse(2.5,3.5)(1.5,0.8) 
                 \psline(0.5,1)(4.5,1)
                 \psline{->}(2.5,2.3)(2.5,1.2)
                 \psdots[dotstyle=+, dotscale=2, dotangle=45](1,1)(4,1)
                 \psdots(2,1)(3,1)(2,2.77)(3,2.77)(2,4.22)(3,4.22)
                \end{pspicture}
\end{tabular}
\end{center}
Thus there are $4$ choices for $\bar D'$ which results in $8$ different $A$-module 
structures on $\OO_{C_p}$.
In order for us to later study the ramification of $\Psi$ we also include the column which shows
which branch corresponds to which module structure.
\begin{center}
        \setstretch{0}
        \begin{tabular}[]{p{3.8cm}|p{3.8cm}|p{3.8cm}}
                \begin{center}
                \begin{spacing}{1}
                        $\bar D'$
                \end{spacing}
                \end{center}&
                
                \begin{center}
                \begin{spacing}{1}
                        Branches above $p$ corresponding to $\bar D'$
                \end{spacing}
                \end{center}
                
                &\begin{center}
                        \begin{spacing}{1}
                                
                 No. of $A$-quotients with support $C_p$
                        \end{spacing}
                \end{center}\\ \hline         
      \psset{xunit=0.7cm,yunit=0.7cm}
      \hspace{0.4cm}\begin{pspicture}(0,0)(3.5,3.3)
                \psellipse(2,1.5)(1.5,0.8) 
                 \psdots(1.5,2.22)(2.5,2.22)
                \end{pspicture}&
      \psset{xunit=0.7cm,yunit=0.7cm}
                \begin{pspicture}(0,0)(3.5,2.6)
                        \psline(0.5,1)(3.5,1)
                           \psline(0.5,2)(3.5,2)
                           \rput(4,2){$1a$}
                           \rput(4,1){$1b$}
                \end{pspicture}
                &\\

                \psset{xunit=0.7cm,yunit=0.7cm}
                 \hspace{0.4cm}\begin{pspicture}(0,0)(3.5,2.6)
                \psellipse(2,1.5)(1.5,0.8) 
                 \psdots(1.5,0.77)(2.5,0.77)
                \end{pspicture}
                
                & \psset{xunit=0.7cm,yunit=0.7cm}
                
                \psset{xunit=0.7cm,yunit=0.7cm}
                \begin{pspicture}(0,0)(3.5,2.6)
                        \psline(0.5,1)(3.5,1)
                           \psline(0.5,2)(3.5,2)
                           \rput(4,2){$2a$}
                           \rput(4,1){$2b$}
                \end{pspicture}
                &\multirow{2}{*}{\hspace{1.7cm} 8}
                \\

\psset{xunit=0.7cm,yunit=0.7cm}
                 \hspace{0.4cm}\begin{pspicture}(0,0)(3.5,2.6)

                \psellipse(2,1.5)(1.5,0.8) 
                 \psdots(1.5,2.22)(2.5,0.77)
                \end{pspicture}

                & \psset{xunit=0.7cm,yunit=0.7cm}
      \psset{xunit=0.7cm,yunit=0.7cm}
                \begin{pspicture}(0,0)(3.5,2.6)
                        \psline(0.5,1)(3.5,1)
                           \psline(0.5,2)(3.5,2)
                           \rput(4,2){$3a$}
                           \rput(4,1){$3b$}
                \end{pspicture}\\

\psset{xunit=0.7cm,yunit=0.7cm}
                \hspace{0.4cm}\begin{pspicture}(0,0)(3.5,2.6)

                \psellipse(2,1.5)(1.5,0.8) 
                 \psdots(1.5,0.77)(2.5,2.22)
                \end{pspicture}& \psset{xunit=0.7cm,yunit=0.7cm}
      \psset{xunit=0.7cm,yunit=0.7cm}
                \begin{pspicture}(0,0)(3.5,2.6)
                        \psline(0.5,1)(3.5,1)
                           \psline(0.5,2)(3.5,2)
                           \rput(4,2){$4a$}
                           \rput(4,1){$4b$}
                \end{pspicture}
\end{tabular}
\end{center} We may thus conclude that $\Psi$ is an $8:1$ cover of $\dualP$. The other cases are used to study the ramification
of this map.
\item [Case 2:] $p$ lies
                on $E'^\vee$ but not on $E^\vee$ nor on any of the four bitangents.

                \begin{center}
                        
                        \psset{xunit=0.7cm,yunit=0.7cm}
                \begin{tabular}{c|c|c}
                        position of $p\in \dualP$ & position of $l_p\subset \P^2$ & $C_p\to l_p$  \\ \hline
                        \begin{pspicture}(0,0)(5,5.4)
                        \psellipse(2.5,2.5)(0.8,1.5)
                        \psellipse(2.5,2.5)(1.5,0.8)
                        \psframe(0,0)(5,5)
                        \qline(2,0.3)(4.7,3.0) 
                        \qline(2,4.7)(4.7,2.0) 
                        \qline(0.3,2)(3.0,4.7) 
                        \qline(0.3,3)(3,0.3) 
                        \psdot(2.5,1.7)
                        \rput(2.5,2.1){$p$}
                \end{pspicture}&
                \begin{pspicture}(0,0)(5,5)
                        \psellipse(2.5,2.5)(0.8,1.5)
                \psellipse(2.5,2.5)(1.5,0.8)
                \psframe(0,0)(5,5)
                \psline(4.5,2.63)(1.3,0.2)
                \psdots[dotstyle=+, dotscale=2](2.4,1.02)
                \psdots[dotstyle=+, dotscale=2](3.09,1.55)
                \rput(1.5,0.8){$l_p$}
                \psdots(3.62,2.)
                \end{pspicture}&
        \begin{pspicture}(0,0)(5,5)
                \psellipse(2.5,3.5)(1.5,0.8) 
                 \psline(0.5,1)(4.5,1)
                 \psline{->}(2.5,2.3)(2.5,1.2)
                 \psdots[dotstyle=+, dotscale=2, dotangle=45](1,1)(4,1)
                 \psdots(2.5,1)(2.5,2.64)(2.5,2.8)(2.5,4.2)(2.5,4.36)
                \end{pspicture}
\end{tabular}

                \end{center}
There are now only $3$ choices for $\bar D'$ as we see in the table below.
\begin{center}
        \setstretch{0}
        \begin{tabular}[]{p{3.8cm}|p{3.8cm}|p{3.8cm}}
                \begin{center}
                \begin{spacing}{1}
                        $\bar D'$
                \end{spacing}
                \end{center}&
                
                \begin{center}
                \begin{spacing}{1}
                        Branches above $p$ corresponding to $\bar D'$
                \end{spacing}
                \end{center}
                
                &\begin{center}
                        \begin{spacing}{1}
                   No. of $A$-quotients with support $C_p$              
                        \end{spacing}
                \end{center}\\ \hline         
             
      \psset{xunit=0.7cm,yunit=0.7cm}
      \hspace{0.4cm}\begin{pspicture}(0,0)(3.5,3.3)
                \psellipse(2,1.5)(1.5,0.8) 
                 \psdots(2,2.33)(2,2.18)
                \end{pspicture}&

      \psset{xunit=0.7cm,yunit=0.7cm}
                \begin{pspicture}(0,0)(3.5,2.6)
                        \psline(0.5,1)(3.5,1)
                           \psline(0.5,2)(3.5,2)
                           \rput(4,2){$1a$}
                           \rput(4,1){$1b$}
                \end{pspicture}
                                \\
                \psset{xunit=0.7cm,yunit=0.7cm}
                \hspace{0.4cm}\begin{pspicture}(0,0)(3.5,2.6)
                \psellipse(2,1.5)(1.5,0.8) 
                 \psdots(2,0.8)(2,0.67)
                \end{pspicture}
                & \psset{xunit=0.7cm,yunit=0.7cm}
      \psset{xunit=0.7cm,yunit=0.7cm}
                \begin{pspicture}(0,0)(3.5,2.6)
                        \psline(0.5,1)(3.5,1)
                           \psline(0.5,2)(3.5,2)
                           \rput(4,2){$2a$}
                           \rput(4,1){$2b$}
                \end{pspicture}
                &
                \hspace{1.7cm}6
                \\

\psset{xunit=0.7cm,yunit=0.7cm}
\hspace{0.4cm}\begin{pspicture}(0,0)(3.5,2.6)
              \rput(0,-1.4){  \psellipse(2,1.5)(1.5,0.8) 
                 \psdots(2,0.73)(2,2.28)}
         \end{pspicture}
                & 
                \psset{xunit=0.7cm,yunit=0.7cm}
      \psset{xunit=0.7cm,yunit=0.7cm}
                \begin{pspicture}(0,0)(3.5,2.6)
                        \psline(1,1)(3.5,1)
                           \psline(1,2)(3.5,2)
                           \psarc(1,1.5){0.35}{90}{270}
                           \rput(4,2){$3a$}
                           \rput(4,1){$4a$}
                \end{pspicture}\\
                &  \psset{xunit=0.7cm,yunit=0.7cm}

         \begin{pspicture}(0,0)(3.5,2.6)
                        \psline(1,1)(3.5,1)
                           \psline(1,2)(3.5,2)
                           \psarc(1,1.5){0.35}{90}{270}
                           \rput(4,2){$3b$}
                           \rput(4,1){$4b$}
                \end{pspicture}
                \end{tabular}
        \end{center}
\item [Case 3:] $p$ lies
                on one exactly one of the four bitangents but not where they meet the conics.

                \begin{center}
                        
                                        \psset{xunit=0.7cm,yunit=0.7cm}
                \begin{tabular}{c|c|c}
                        position of $p\in \dualP$ & position of $l_p\subset \P^2$ & $C_p\to l_p$  \\ \hline
                        \begin{pspicture}(0,0)(5,5.4)
                        \psellipse(2.5,2.5)(0.8,1.5)
                        \psellipse(2.5,2.5)(1.5,0.8)
                        \psframe(0,0)(5,5)
                        \qline(2,0.3)(4.7,3.0) 
                        \qline(2,4.7)(4.7,2.0) 
                        \qline(0.3,2)(3.0,4.7) 
                        \qline(0.3,3)(3,0.3) 
                        \psdot(3.3,1.6)
                        \rput(3.6,1.3){$p$}
                \end{pspicture}&
                \begin{pspicture}(0,0)(5,5)
                        \psellipse(2.5,2.5)(0.8,1.5)
                \psellipse(2.5,2.5)(1.5,0.8)
                \psframe(0,0)(5,5)
                \psline(4,4.45)(1.3,0.2)
                \psdots[dotstyle=+, dotscale=2, dotangle=0](2,1.3)
                \psdots[dotstyle=+, dotscale=2, dotangle=45](3.2,3.2)
                \rput(1.3,0.8){$l_p$}
                \psdots(2.26,1.75)(3.2,3.2)
                \end{pspicture}&
        \begin{pspicture}(0,0)(5,5)
                \psellipse(2.5,3.5)(1.5,0.8) 
                 \psline(0.5,1)(4.5,1)
                 \psline{->}(2.5,2.3)(2.5,1.2)
                 \psdots[dotstyle=+, dotscale=2, dotangle=45](1,1)(4,1)
                 \psdots(2.5,1)(2.5,2.73)(2.5,4.28)(1,1)(1,3.41)(1,3.59)
                \end{pspicture}
\end{tabular}

                \end{center}
There are now only $2$ choices for $\bar D'$ as we explain in the table below.
\begin{center}
        \setstretch{0}
        \begin{tabular}[]{p{3.8cm}|p{3.8cm}|p{3.8cm}}
                \begin{center}
                \begin{spacing}{1}
                        $\bar D'$
                \end{spacing}
                \end{center}&
                
                \begin{center}
                \begin{spacing}{1}
                        Branches above $p$ corresponding to $\bar D'$
                \end{spacing}
                \end{center}
                
                &\begin{center}
                        \begin{spacing}{1}
                   No. of $A$-quotients with support $C_p$              
                        \end{spacing}
                \end{center}\\ \hline

      \psset{xunit=0.7cm,yunit=0.7cm}
      \hspace{0.4cm}\begin{pspicture}(0,0)(3.5,3.3)
               \rput(0,-1.4){ \psellipse(2,1.5)(1.5,0.8) 
               \psdots(2,2.28)(0.54,1.5)}
                 \end{pspicture}&
      \psset{xunit=0.7cm,yunit=0.7cm}
                \begin{pspicture}(0,0)(3.5,2.6)
                        \psline(1,1)(3.5,1)
                           \psline(1,2)(3.5,2)\psarc(1,1.5){0.35}{90}{270}
                           \rput(4,2){$1a$}
                           \rput(4,1){$4a$}
                \end{pspicture}
                \\
                &
                 \psset{xunit=0.7cm,yunit=0.7cm}
      \psset{xunit=0.7cm,yunit=0.7cm}
                \begin{pspicture}(0,0)(3.5,2.6)
                        \psline(1,1)(3.5,1)
                           \psline(1,2)(3.5,2)\psarc(1,1.5){0.35}{90}{270}
                           \rput(4,2){$1b$}
                           \rput(4,1){$4b$}
                \end{pspicture}&
                
                \hspace{1.7cm}4\\
\psset{xunit=0.7cm,yunit=0.7cm}
\hspace{0.4cm}\begin{pspicture}(0,0)(3.5,2.6)
             \rput(0,-1.4){   \psellipse(2,1.5)(1.5,0.8) 
                 \psdots(2,0.74)(0.54,1.5)}
         \end{pspicture}&
\psset{xunit=0.7cm,yunit=0.7cm}
                 
                \psset{xunit=0.7cm,yunit=0.7cm}
      \psset{xunit=0.7cm,yunit=0.7cm}
                \begin{pspicture}(0,0)(3.5,2.6)
                        \psline(1,1)(3.5,1)
                           \psline(1,2)(3.5,2)
                           \psarc(1,1.5){0.35}{90}{270}
                           \rput(4,2){$2a$}
                           \rput(4,1){$3a$}
                \end{pspicture}\\
                &  \psset{xunit=0.7cm,yunit=0.7cm}

         \begin{pspicture}(0,0)(3.5,2.6)
                        \psline(1,1)(3.5,1)
                           \psline(1,2)(3.5,2)
                           \psarc(1,1.5){0.35}{90}{270}
                           \rput(4,2){$2b$}
                           \rput(4,1){$3b$}
                \end{pspicture}
                \end{tabular}
\end{center}
\item [Case 4:]$p$ 
                is chosen to be the point of intersection of two bitangents to $E^\vee$ and
                $E'^\vee$.
                \begin{center}
                        
                                        \psset{xunit=0.7cm,yunit=0.7cm}
                \begin{tabular}{c|c|c}
                        position of $p\in \dualP$ & position of $l_p\subset \P^2$ & $C_p\to l_p$  \\ \hline
                        \begin{pspicture}(0,0)(5,5.4)
                        \psellipse(2.5,2.5)(0.8,1.5)
                        \psellipse(2.5,2.5)(1.5,0.8)
                        \psframe(0,0)(5,5)
                        \qline(2,0.3)(4.7,3.0) 
                        \qline(2,4.7)(4.7,2.0) 
                        \qline(0.3,2)(3.0,4.7) 
                        \qline(0.3,3)(3,0.3) 
                        \psdot(4.2,2.5)
                        \rput(4.2,2){$p$}
                \end{pspicture}&
                \begin{pspicture}(0,0)(5,5)
                        \psellipse(2.5,2.5)(0.8,1.5)
                \psellipse(2.5,2.5)(1.5,0.8)
                \psframe(0,0)(5,5)
                \psline(4,4)(1,1)
                \psdots[dotstyle=+, dotscale=2, dotangle=20](1.8,1.8)
                \psdots[dotstyle=+, dotscale=2, dotangle=45](3.2,3.2)
                \rput(1.3,0.8){$l_p$}
                \psdots(1.8,1.8)(3.2,3.2)
                \end{pspicture}&
        \begin{pspicture}(0,0)(5,5)
                \psellipse(2.5,3.5)(1.5,0.8) 
                 \psline(0.5,1)(4.5,1)
                 \psline{->}(2.5,2.3)(2.5,1.2)
                 \psdots[dotstyle=+, dotscale=2, dotangle=45](1,1)(4,1)
                 \psdots(4,1)(4,3.41)(4,3.59)(1,1)(1,3.41)(1,3.59)
                \end{pspicture}
\end{tabular}
                \end{center}
There is now only $1$ choice for $\bar D'$ as we explain in the table below.
\begin{center}
         \setstretch{0}
        \begin{tabular}[]{p{3.8cm}|p{3.8cm}|p{3.8cm}}
                \begin{center}
                \begin{spacing}{1}
                        $\bar D'$
                \end{spacing}
                \end{center}&
                
                \begin{center}
                \begin{spacing}{1}
                        Branches above $p$ corresponding to $\bar D'$
                \end{spacing}
                \end{center}
                
                &\begin{center}
                        \begin{spacing}{1}
                   No. of $A$-quotients with support $C_p$              
                        \end{spacing}
                \end{center}\\ \hline

      \psset{xunit=0.7cm,yunit=0.7cm}
      \hspace{0.4cm}\begin{pspicture}(0,0)(3.5,3.3)
                \rput(0,-1.4){\psellipse(2,1.5)(1.5,0.8) 
                \psdots(3.46,1.5)(0.53,1.5)}
                 \end{pspicture}&
      \psset{xunit=0.7cm,yunit=0.7cm}
                \begin{pspicture}(0,0)(3.5,2.6)
                        \psline(1,1)(3.5,1)
                        \psline(1,1.5)(3.5,1.5)
                        \psline(1.15,2)(3.5,2)
                           \psline(1.15,0.5)(3.5,0.5)
                           \psarc(1.29,1.25){0.53}{-180}{-100}
                           \psarc(1.29,1.25){0.53}{100}{180}
                           \psarc(1,1){0.35}{90}{150}
                           \psarc(1,1.5){0.355}{-155}{-90}
                           \rput(4,2){$1a$}
                           \rput(4,1.5){$2a$}
                           \rput(4,1){$3a$}
                           \rput(4,0.5){$4a$}
                \end{pspicture}
                &\hspace{1.7cm}{2}\\
                &
                \psset{xunit=0.7cm,yunit=0.7cm}
                \begin{pspicture}(0,0)(3.5,2.6)
                        \psline(1,1)(3.5,1)
                        \psline(1,1.5)(3.5,1.5)
                        \psline(1.15,2)(3.5,2)
                           \psline(1.15,0.5)(3.5,0.5)
                           \psarc(1.29,1.25){0.53}{-180}{-100}
                           \psarc(1.29,1.25){0.53}{100}{180}
                           \psarc(1,1){0.35}{90}{150}
                           \psarc(1,1.5){0.355}{-155}{-90}
                           \rput(4,2){$1b$}
                           \rput(4,1.5){$2b$}
                           \rput(4,1){$3b$}
                           \rput(4,0.5){$4b$}
                           \end{pspicture} 
        \end{tabular}
\end{center}

\item [Case 5:] $p$ lies on the intersection of one of the bitangents and $E'^\vee$.
                 
                 \begin{center}
                         
                                        \psset{xunit=0.7cm,yunit=0.7cm}
                \begin{tabular}{c|c|c}
                        position of $p\in \dualP$ & position of $l_p\subset \P^2$ & $C_p\to l_p$  \\ \hline
                        \begin{pspicture}(0,0)(5,5.4)
                        \psellipse(2.5,2.5)(0.8,1.5)
                        \psellipse(2.5,2.5)(1.5,0.8)
                        \psframe(0,0)(5,5)
                        \qline(2,0.3)(4.7,3.0) 
                        \qline(2,4.7)(4.7,2.0) 
                        \qline(0.3,2)(3.0,4.7) 
                        \qline(0.3,3)(3,0.3) 
                        \psdot(3.84,2.14)
                        \rput(4,1.8){$p$}
                \end{pspicture}&
                \begin{pspicture}(0,0)(5,5)
                        \psellipse(2.5,2.5)(0.8,1.5)
                \psellipse(2.5,2.5)(1.5,0.8)
                \psframe(0,0)(5,5)
                \psline(4.4,2.85)(1,3.85)
                \rput(1.3,4.2){$l_p$}
                \end{pspicture}&
        \begin{pspicture}(0,0)(5,5)
                \psellipse(2.5,3.5)(1.5,0.8) 
                 \psline(0.5,1)(4.5,1)
                 \psline{->}(2.5,2.3)(2.5,1.2)
                 \psdots[dotstyle=+, dotscale=2, dotangle=45](1,1)(4,1)
                 \psdots(1,0.92)(1.08,3.41)(1.08,3.59)(1,1.08)(0.98,3.41)(0.98,3.59)
                \end{pspicture}
\end{tabular}

                 \end{center}
There is now only $1$ choice for $\bar D'$ as we explain in the table below.
\begin{center}
          \setstretch{0}
        \begin{tabular}[]{p{3.8cm}|p{3.8cm}|p{3.8cm}}
                \begin{center}
                \begin{spacing}{1}
                        $\bar D'$
                \end{spacing}
                \end{center}&
                
                \begin{center}
                \begin{spacing}{1}
                        Branches above $p$ corresponding to $\bar D'$
                \end{spacing}
                \end{center}
                
                &\begin{center}
                        \begin{spacing}{1}
                   No. of $A$-quotients with support $C_p$              
                        \end{spacing}
                \end{center}\\ \hline         
       
      \psset{xunit=0.7cm,yunit=0.7cm}
      \hspace{0.4cm}\begin{pspicture}(0,0)(3.5,3.3)
       \rput(0,-1.4){\psellipse(2,1.5)(1.5,0.8) 
       \psdots(0.54,1.41)(.54,1.59)}
                 \end{pspicture}&
      \psset{xunit=0.7cm,yunit=0.7cm}
                \begin{pspicture}(0,0)(3.5,2.6)
                        \psline(1,1)(3.5,1)
                        \psline(1,1.5)(3.5,1.5)
                        \psline(1.15,2)(3.5,2)
                           \psline(1.15,0.5)(3.5,0.5)
                           \psarc(1.29,1.25){0.53}{-180}{-100}
                           \psarc(1.29,1.25){0.53}{100}{180}
                           \psarc(1,1){0.35}{90}{150}
                           \psarc(1,1.5){0.355}{-155}{-90}
                           \rput(4,2){$1a$}
                           \rput(4,1.5){$2a$}
                           \rput(4,1){$3a$}
                           \rput(4,0.5){$4a$}
                \end{pspicture}
                &\hspace{1.7cm}2\\
                &
                \psset{xunit=0.7cm,yunit=0.7cm}
                \begin{pspicture}(0,0)(3.5,2.6)
                        \psline(1,1)(3.5,1)
                        \psline(1,1.5)(3.5,1.5)
                        \psline(1.15,2)(3.5,2)
                           \psline(1.15,0.5)(3.5,0.5)
                           \psarc(1.29,1.25){0.53}{-180}{-100}
                           \psarc(1.29,1.25){0.53}{100}{180}
                           \psarc(1,1){0.35}{90}{150}
                           \psarc(1,1.5){0.355}{-155}{-90}
                           \rput(4,2){$1b$}
                           \rput(4,1.5){$2b$}
                           \rput(4,1){$3b$}
                           \rput(4,0.5){$4b$}
                           \end{pspicture} 
        \end{tabular}
\end{center}
\end{enumerate}
\subsubsection{If $C$ is singular.}\label{sec2}
We now analyse the second case mentioned on page \pageref{2cases}. 
Here $C_p$ is singular, in fact it is the union of two $\P^1$'s
crossing at one point. This occurs precisely when $l_p$ is tangential to $E$
or, equivalently,
when $p$ lies on $E^\vee$. Let $C_p=F_p+ F'_p$ where
$F_p$ is a $(1,0)$-divisor and $F'_p=\sigma^*F_p$ which  is a $(0,1)$-divisor. In this
case $\Pic C_p=\Z\oplus\Z$.
Thus $H^0(C_p,\bar L^{-1})=H^0(C_p,\OO_{C_p}(1,1))$ and so to give $\OO_{C_p}$ an $A$-module
structure corresponds to choosing two points  $\bar D'\subseteq \bar D:=C_p\cap D$
one lying on $F_p$ the other on $F'_p$
such that $\bar D'+\sigma^*
\bar D'=\bar D$. As before, any such choice gives rise to precisely two $A$-module structures. Since we must choose
one point from $F_p$ and the other from $F'_p$ (and can not choose
both points to lie on $F_p$ nor on $F'_p$) implies that
we have ``lost'' some quotients of $A$ corresponding to $p$. From a geometric view point, this means
that $\bar D' =      
                              \psset{xunit=0.3cm,yunit=0.3cm}
         \begin{pspicture}(0,0)(3.5,3.3)
                         \rput(0,-1){
                           \psline(0.5,2.3)(3.5,0.7)
                           \psline(0.5,0.7)(3.5,2.3)
                           \psdots(2.5,1.78)(3,2.04)}
                 \end{pspicture}$ and $\bar D' =      
                              \psset{xunit=0.3cm,yunit=0.3cm}
         \begin{pspicture}(0,0)(3.5,3.3)
                         \rput(0,-1){
                           \psline(0.5,2.3)(3.5,0.7)
                           \psline(0.5,0.7)(3.5,2.3)
                           \psdots(2.5,1.2)(3,1.01)}
                 \end{pspicture}$
                 do not correspond to $A$-module structures on $\OO_{C_p}$.
                 
However we are now
in the case where Corollary \ref{smoothsupport} no longer applies, and so
not all quotients of $A$ have their underlying $\OO_Y$-module structure equal to $\OO_C$ for
some $(1,1)$-divisor $C$. In fact
from Corollary \ref{CorQuot} we know that for every $p$ lying on $E'^\vee$ there are two additional quotients
of $A$ (in the sense
that they have no analogue in Cases 1-5 because
they are not quotients of $\OO_Y$) with support $C_p$ and they are $\A\OO_{F_p}$ and $\A\OO_{F'_p}$. It is thus
natural to think of the above two choices of $\bar D'$ as giving rise to these two quotients of $A$ and
so we make this association in our future analysis of $\Psi$.

\begin{enumerate}
        \item [Case 6:]$p$ lies
                on $E^\vee$ but not on $E'^\vee$ nor on any of the four bitangents.

                \begin{center}
                        
                        \psset{xunit=0.7cm,yunit=0.7cm}
                \begin{tabular}{c|c|c}
                        position of $p\in \dualP$ & position of $l_p\subset \P^2$ & $C_p\to l_p$  \\ \hline
                        \begin{pspicture}(0,0)(5,5.4)
                        \psellipse(2.5,2.5)(0.8,1.5)
                        \psellipse(2.5,2.5)(1.5,0.8)
                        \psframe(0,0)(5,5)
                        \qline(2,0.3)(4.7,3.0) 
                        \qline(2,4.7)(4.7,2.0) 
                        \qline(0.3,2)(3.0,4.7) 
                        \qline(0.3,3)(3,0.3) 
                        \psdot(1.7,2.5)
                        \rput(2.1,2.4){$p$}
                \end{pspicture}&
                \begin{pspicture}(0,0)(5,5)
               \rput(5,0){\rput{90}(0,0){   \psellipse(2.5,2.5)(0.8,1.5)
                \psellipse(2.5,2.5)(1.5,0.8)
                \psframe(0,0)(5,5)
                \psline(4.5,2.63)(1.3,0.2)
                \psdots(2.4,1.02)
                \psdots(3.09,1.55)
                \psdots(3.62,2)
                \psdots[dotstyle=+, dotscale=2](3.62,2)
                }}
                \rput(4.5,1){$l_p$}
        \end{pspicture}&
        \begin{pspicture}(0,0)(5,5)
                \psline(0.5,2)(4.5,4)
                \psline(0.5,4)(4.5,2)
                \psline(0.5,1)(4.5,1)
                 \psline{->}(2.5,2.5)(2.5,1.4)
                 \psdots[dotstyle=+, dotscale=2, dotangle=45](2.5,0.91)(2.5,1.09)
                 \psdots(3.2,1)(4,1)(3.2,2.64)(3.2,3.37)(4,2.25)(4,3.76)
                \end{pspicture}
\end{tabular}

                \end{center}
There are now the full $4$ choices for $\bar D'$, however they only gives rise to six quotients of $A$
as we explain below.
\begin{center}
         \setstretch{0}
        \begin{tabular}[]{p{3.8cm}|p{3.8cm}|p{3.8cm}}
                \begin{center}
                \begin{spacing}{1}
                        $\bar D'$
                \end{spacing}
                \end{center}&
                
                \begin{center}
                \begin{spacing}{1}
                        Branches above $p$ corresponding to $\bar D'$
                \end{spacing}
                \end{center}
                
                &\begin{center}
                        \begin{spacing}{1}
                   No. of $A$-quotients with support $C_p$              
                        \end{spacing}
                \end{center}\\ \hline         

         \psset{xunit=0.7cm,yunit=0.7cm}
         \hspace{0.4cm}\begin{pspicture}(0,0)(3.5,3.3)
                           \psline(0.5,2.3)(3.5,0.7)
                           \psline(0.5,0.7)(3.5,2.3)
                 \psdots(2.5,1.78)(3,2.04)
                \end{pspicture}
                &
   \psset{xunit=0.7cm,yunit=0.7cm}
                \begin{pspicture}(0,0)(3.5,2.6)
                        \psline(1,1)(3.5,1)
                           \psline(1,2)(3.5,2)\psarc(1,1.5){0.35}{90}{270}
                           \rput(4,2){$1a$}
                           \rput(4,1){$1b$}
                \end{pspicture}
                &
             
                \\
 \psset{xunit=0.7cm,yunit=0.7cm}
        \hspace{0.4cm}\begin{pspicture}(0,0)(3.5,2.6)
                           \psline(0.5,2.3)(3.5,0.7)
                           \psline(0.5,0.7)(3.5,2.3)
                 \psdots(2.5,1.24)(3,0.97)
                \end{pspicture}
                &
   \psset{xunit=0.7cm,yunit=0.7cm}
                \begin{pspicture}(0,0)(3.5,2.6)
                        \psline(1,1)(3.5,1)
                           \psline(1,2)(3.5,2)\psarc(1,1.5){0.35}{90}{270}
                           \rput(4,2){$2a$}
                           \rput(4,1){$2b$}
                \end{pspicture}&
                \hspace{1.7cm}6
                \\

\psset{xunit=0.7cm,yunit=0.7cm}
        \hspace{0.4cm}\begin{pspicture}(0,0)(3.5,2.6)
                           \psline(0.5,2.3)(3.5,0.7)
                           \psline(0.5,0.7)(3.5,2.3)
                 \psdots(2.5,1.78)(3,0.97)
                \end{pspicture}
                &
   \psset{xunit=0.7cm,yunit=0.7cm}
                \begin{pspicture}(0,0)(3.5,2.6)
                        \psline(0.5,1)(3.5,1)
                           \psline(0.5,2)(3.5,2)
                           \rput(4,2){$3a$}
                           \rput(4,1){$3b$}
                \end{pspicture}\\

\psset{xunit=0.7cm,yunit=0.7cm}
        \hspace{0.4cm}\begin{pspicture}(0,0)(3.5,2.6)
                           \psline(0.5,2.3)(3.5,0.7)
                           \psline(0.5,0.7)(3.5,2.3)
                 \psdots(2.5,1.24)(3,2.04)
                \end{pspicture}
                &
   \psset{xunit=0.7cm,yunit=0.7cm}
                \begin{pspicture}(0,0)(3.5,2.6)
                        \psline(0.5,1)(3.5,1)
                           \psline(0.5,2)(3.5,2)
                           \rput(4,2){$4a$}
                           \rput(4,1){$4b$}
                \end{pspicture}

                              \end{tabular}

\end{center} 
                              
Let us explain further why branches $1a$ and $1b$ come together
                              here and why this case
                              is different to Case 1. Recall that to picking $\bar D' =      
                              \psset{xunit=0.3cm,yunit=0.3cm}
         \begin{pspicture}(0,0)(3.5,3.3)
                         \rput(0,-1){
                           \psline(0.5,2.3)(3.5,0.7)
                           \psline(0.5,0.7)(3.5,2.3)
                           \psdots(2.5,1.78)(3,2.04)}
                 \end{pspicture}$ and $      
                              \psset{xunit=0.3cm,yunit=0.3cm}
         \begin{pspicture}(0,0)(3.5,3.3)
                         \rput(0,-1){
                           \psline(0.5,2.3)(3.5,0.7)
                           \psline(0.5,0.7)(3.5,2.3)
                           \psdots(2.5,1.2)(3,1.01)}
                 \end{pspicture}$ we associate 
                 not a total of four  $A$-module structure on $\OO_{C_p}$ but the two quotients of $A$ that
                 are not quotients of $\OO_Y$ with support $C_p$, namely $\A\OO_F$ and $\A\OO_{F'}$.
                             We also saw that the involution $\tau$ from Proposition \ref{PropInv}
                fixes points of $\Hilb$ corresponding to $\A\OO_F$ and that by Corollary \ref{CorFactor} the map $\Psi$ factors
                through $\tau$. Hence the branches $1a$ and $1b$ must intersect at precisely points corresponding to $\A\OO_F$.                 
                The same
                argument applies to explain why the branches $2a$ and $2b$ also merge.

\item [Case 7:] $p$ 
lies on the intersection of $E^\vee$ and $E'^\vee$.

\begin{center}
        
        \psset{xunit=0.7cm,yunit=0.7cm}
                \begin{tabular}{c|c|c}
                        position of $p\in \dualP$ & position of $l_p\subset \P^2$ & $C_p\to l_p$  \\ \hline
                        \begin{pspicture}(0,0)(5,5.4)\label{case7}
                        \psellipse(2.5,2.5)(0.8,1.5)
                        \psellipse(2.5,2.5)(1.5,0.8)
                        \psframe(0,0)(5,5)
                        \qline(2,0.3)(4.7,3.0) 
                        \qline(2,4.7)(4.7,2.0) 
                        \qline(0.3,2)(3.0,4.7) 
                        \qline(0.3,3)(3,0.3) 
                        \psdot(3.18,3.18)
                        \rput(2.9,2.9){$p$}
                \end{pspicture}&
                \begin{pspicture}(0,0)(5,5)
            
                        \psellipse(2.5,2.5)(0.8,1.5)
                        \psellipse(2.5,2.5)(1.5,0.8)
                        \psframe(0,0)(5,5)
                        \qline(2,4.7)(4.7,2.0) 
                        \rput(4.5,1.8){$l_p$}
                
                \end{pspicture}&
        \begin{pspicture}(0,0)(5,5)
                \psline(0.5,2)(4.5,4)
                \psline(0.5,4)(4.5,2)
                \psline(0.5,1)(4.5,1)
                 \psline{->}(2.5,2.5)(2.5,1.4)
                 \psdots[dotstyle=+, dotscale=2, dotangle=45](2.5,0.91)(2.5,1.09)
                 \psdots(4,0.92)(4,1.08)(4,2.18)(4,2.32)(4,3.68)(4,3.84)
                \end{pspicture}
\end{tabular}

\end{center}
There are now only $4$ choices for $\bar D'$ as we explain in the table below.
\begin{center}
        
          \setstretch{0}
        \begin{tabular}[]{p{3.8cm}|p{3.8cm}|p{3.8cm}}
                \begin{center}
                \begin{spacing}{1}
                        $\bar D'$
                \end{spacing}
                \end{center}&
                
                \begin{center}
                \begin{spacing}{1}
                        Branches above $p$ corresponding to $\bar D'$
                \end{spacing}
                \end{center}
                
                &\begin{center}
                        \begin{spacing}{1}
                   No. of $A$-quotients with support $C_p$              
                        \end{spacing}
                \end{center}\\ \hline         
        
             \psset{xunit=0.7cm,yunit=0.7cm}
             \hspace{0.4cm}\begin{pspicture}(0,0)(3.5,3.3)
                           \psline(0.5,2.3)(3.5,0.7)
                           \psline(0.5,0.7)(3.5,2.3)
                 \psdots(2.8,2)(2.8,1.84)
                \end{pspicture}
                &
   \psset{xunit=0.7cm,yunit=0.7cm}
               \begin{pspicture}(0,0)(3.5,2.6)
                        \psline(1,1)(3.5,1)
                           \psline(1,2)(3.5,2)\psarc(1,1.5){0.35}{90}{270}
                           \rput(4,2){$1a$}
                           \rput(4,1){$1b$}
                \end{pspicture}
                &
                \\
 \psset{xunit=0.7cm,yunit=0.7cm}
        \hspace{0.4cm}\begin{pspicture}(0,0)(3.5,2.6)
                           \psline(0.5,2.3)(3.5,0.7)
                           \psline(0.5,0.7)(3.5,2.3)
 \psdots(2.8,1.15)(2.8,0.99)

                \end{pspicture}
                &
   \psset{xunit=0.7cm,yunit=0.7cm}
                \begin{pspicture}(0,0)(3.5,2.6)
                        \psline(1,1)(3.5,1)
                           \psline(1,2)(3.5,2)\psarc(1,1.5){0.35}{90}{270}
                           \rput(4,2){$2a$}
                           \rput(4,1){$2b$}
                \end{pspicture}&
                \hspace{1.7cm}4
                \\

\psset{xunit=0.7cm,yunit=0.7cm}
\hspace{0.4cm}\begin{pspicture}(0,0)(3.5,2.6)
                           \rput(0,-1.4){\psline(0.5,2.3)(3.5,0.7)
                           \psline(0.5,0.7)(3.5,2.3)
                           \psdots(2.8,1.1)(2.8,1.94)}
                 \end{pspicture}
                &
   \psset{xunit=0.7cm,yunit=0.7cm}
                \begin{pspicture}(0,0)(3.5,2.6)
                        \psline(1,1)(3.5,1)
                           \psline(1,2)(3.5,2)\psarc(1,1.5){0.35}{90}{270}
                           \rput(4,2){$3a$}
                           \rput(4,1){$4a$}
                \end{pspicture}\\

               &

 \psset{xunit=0.7cm,yunit=0.7cm}
                \begin{pspicture}(0,0)(3.5,2.6)
                        \psline(1,1)(3.5,1)
                           \psline(1,2)(3.5,2)\psarc(1,1.5){0.35}{90}{270}
                           \rput(4,2){$3b$}
                           \rput(4,1){$4b$}
                \end{pspicture}

                              \end{tabular}
\end{center}
\item [Case 8:]$p$ is one of the points of intersection of the bitangents with $E^\vee$.
\begin{center}
        
                        \psset{xunit=0.7cm,yunit=0.7cm}
                \begin{tabular}{c|c|c}
                        position of $p\in \dualP$ & position of $l_p\subset \P^2$ & $C_p\to l_p$  \\ \hline
                        \begin{pspicture}(0,0)(5,5.4)
                        \psellipse(2.5,2.5)(0.8,1.5)
                        \psellipse(2.5,2.5)(1.5,0.8)
                        \psframe(0,0)(5,5)
                        \qline(2,0.3)(4.7,3.0) 
                        \qline(2,4.7)(4.7,2.0) 
                        \qline(0.3,2)(3.0,4.7) 
                        \qline(0.3,3)(3,0.3) 
                        \psdot(2.85,3.85)
                        \rput(2.6,3.6){$p$}
                \end{pspicture}&
                \begin{pspicture}(0,0)(5,5)
            
                        \psellipse(2.5,2.5)(0.8,1.5)
                        \psellipse(2.5,2.5)(1.5,0.8)
                        \psframe(0,0)(5,5)
                        \qline(2.8,4.7)(3.8,1) 
                        \rput(4.2,1.1){$l_p$}
                
                \end{pspicture}&
        \begin{pspicture}(0,0)(5,5)
                \psline(0.5,2)(4.5,4)
                \psline(0.5,4)(4.5,2)
                \psline(0.5,1)(4.5,1)
                 \psline{->}(2.5,2.5)(2.5,1.4)
                 \psdots[dotstyle=+, dotscale=2, dotangle=45](2.5,0.91)(2.5,1.09)
                 \psdots(4,1)(4,2.24)(4,3.76)(2.5,3)(2.5,1)
                \end{pspicture}
\end{tabular}

\end{center}
There are now only $2$ choices for $\bar D'$ as we explain in the table below.
\begin{center}
      
          \setstretch{0}
        \begin{tabular}[]{p{3.8cm}|p{3.8cm}|p{3.8cm}}
                \begin{center}
                \begin{spacing}{1}
                        $\bar D'$
                \end{spacing}
                \end{center}&
                
                \begin{center}
                \begin{spacing}{1}
                        Branches above $p$ corresponding to $\bar D'$
                \end{spacing}
                \end{center}
                
                &\begin{center}
                        \begin{spacing}{1}
                   No. of $A$-quotients with support $C_p$              
                        \end{spacing}
                \end{center}\\ \hline

      \psset{xunit=0.7cm,yunit=0.7cm}
      \hspace{0.4cm}\begin{pspicture}(0,0)(3.5,3.3)
                           \psline(0.5,2.3)(3.5,0.7)
                           \psline(0.5,0.7)(3.5,2.3)
                 \psdots(2,1.5)(2.8,1.94)
                \end{pspicture}
                &\psset{xunit=0.7cm,yunit=0.7cm}
    \begin{pspicture}(0,0)(3.5,2.6)
                        \psline(1,1)(3.5,1)
                        \psline(1,1.5)(3.5,1.5)
                        \psline(1.15,2)(3.5,2)
                           \psline(1.15,0.5)(3.5,0.5)
                           \psarc(1.29,1.25){0.53}{-180}{-100}
                           \psarc(1.29,1.25){0.53}{100}{180}
                           \psarc(1,1){0.35}{90}{150}
                           \psarc(1,1.5){0.355}{-155}{-90}
                           \rput(4,2){$1a$}
                           \rput(4,1.5){$1b$}
                           \rput(4,1){$4a$}
                           \rput(4,0.5){$4b$}
                \end{pspicture}&
                \hspace{1.7cm}{2}
                \\

   \psset{xunit=0.7cm,yunit=0.7cm}
   \hspace{0.4cm}\begin{pspicture}(0,0)(3.5,3.3)
                           \psline(0.5,2.3)(3.5,0.7)
                           \psline(0.5,0.7)(3.5,2.3)
                 \psdots(2,1.5)(2.8,1.1)
                \end{pspicture}
                &\psset{xunit=0.7cm,yunit=0.7cm}
    \begin{pspicture}(0,0)(3.5,2.6)
                        \psline(1,1)(3.5,1)
                        \psline(1,1.5)(3.5,1.5)
                        \psline(1.15,2)(3.5,2)
                           \psline(1.15,0.5)(3.5,0.5)
                           \psarc(1.29,1.25){0.53}{-180}{-100}
                           \psarc(1.29,1.25){0.53}{100}{180}
                           \psarc(1,1){0.35}{90}{150}
                           \psarc(1,1.5){0.355}{-155}{-90}
                           \rput(4,2){$2a$}
                           \rput(4,1.5){$2b$}
                           \rput(4,1){$3a$}
                           \rput(4,0.5){$3b$}
                \end{pspicture}

                              \end{tabular}

                      \end{center}

                      Note that the two $A$-module structures with support $C_p=F+F'$
                      are $\A\OO_F$ and $\A\OO_{F'}$.

              \end{enumerate}
By carefully following which branch connects to which branch we can see that $\Hilb$
is in fact connected
and thus we may conclude that $\Hilb$ is in fact a smooth projective surface.
\subsection{Possible second Chern classes of $A$-line bundles}\label{Chernproof}
In this section we tie up one loose end that we have left from Section \ref{Chernclasses} and prove
the existence of lines bundles with all possible combinations of Chern classes, provided
they satisfy our Bogomolov-type inequality. We continue with  the same notation as before.
\begin{Thm}
        Let $c_1\in \Pic Y$ and $c_2\in \Z$ such that $4c_2-c_1^2\geq -2$. Then
        there exists an $M\in\Pic A$ with these Chern classes.
\end{Thm}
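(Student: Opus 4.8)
The plan is to realise every admissible pair of Chern classes explicitly, by writing the desired $A$-line bundle as the kernel of an $A$-module surjection from a standard $A$-line bundle onto a line bundle supported on a smooth $\sigma$-invariant $(1,1)$-curve.

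I would begin by cutting down the number of cases. By the classification of first Chern classes in Section~\ref{Chernclasses}, every $A$-line bundle has $c_1=\OO_Y(n,n)$, so we may assume $c_1$ has this shape (for other $c_1$ there is nothing to construct). The autoequivalence $\OO_Y(mH)\otimes_Y-$ of Remark~\ref{RemarkOnC} carries $A$-line bundles to $A$-line bundles, adds $2mH$ to $c_1$, and leaves $\Delta=4c_2-c_1^2$ unchanged; in particular it preserves both the hypothesis $\Delta\ge-2$ and the parity of $n$. Twisting therefore reduces the theorem to two assertions: that there is an $A$-line bundle with $c_1=\OO_Y(-1,-1)$ for every $c_2\ge 0$, and one with $c_1=\OO_Y(-2,-2)$ for every $c_2\ge 2$, these being precisely the constraint $\Delta\ge-2$ in the odd and even cases respectively.

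For the construction, fix a line $l\subset Z$ whose preimage $C:=\pi^*l$ is a smooth (hence $\cong\P^1$) $\sigma$-invariant $(1,1)$-curve, and let $i\colon C\hookrightarrow Y$. As in Section~\ref{SectionOc}, every line bundle $\OO_C(e)$ admits an $A$-module structure, given by a map $m\colon\bar L\to\OO_C$ with ${\rm div}\,m+{\rm div}\,\sigma^*m=\bar D$; this datum does not depend on $e$. Now let $\mathcal E$ be an $A$-line bundle together with a surjective $A$-module map $\mathcal E\twoheadrightarrow i_*\OO_C(e)$, and put $M:=\ker$. Then $M$ is an $A$-submodule, and since $\mathcal E$ is locally free of rank two over $Y$ while we are dividing by a line bundle on the smooth divisor $C$, a local calculation shows $M$ is again locally free of rank two; hence $M\in\Pic A$ by Proposition~\ref{PropLocFree}. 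A Chern-character computation gives $c_1(M)=c_1(\mathcal E)-\OO_Y(1,1)$ and $c_2(M)=c_2(\mathcal E)+e-c_1(\mathcal E)\cdot H$.

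Taking $\mathcal E=A$ (so $c_1(A)=\OO_Y(-1,-1)$, $c_2(A)=0$) yields $c_1(M)=\OO_Y(-2,-2)$ and $c_2(M)=e+2$, so as $e$ runs through $\Z_{\ge0}$ we obtain every $c_2\ge2$; this disposes of the even case. In the odd case $A$ itself gives $c_2=0$, and taking $\mathcal E=\A\OO_Y(0,1)$ (for which $c_1(\mathcal E)=\OO_Y(0,0)$) gives $c_1(M)=\OO_Y(-1,-1)$ and $c_2(M)=e$, realising every $c_2\ge1$. The one real obstacle is the existence of the surjective $A$-module map $\mathcal E\twoheadrightarrow i_*\OO_C(e)$. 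For $\mathcal E=A$ one has $\Hom AA{i_*\OO_C(e)}=H^0(C,\OO_C(e))$, and for $\mathcal E=\A N$ the adjunction $\Hom A{\A N}{-}\simeq\Hom YN{-}$ (Proposition~2.6 of \cite{main}) identifies the relevant homomorphisms with sections on $C$; the hard part is to show that a \emph{generic} such homomorphism is surjective, i.e.\ that a single section $t$ generates $i_*\OO_C(e)$ over $A$. This is exactly where the $L_\sigma$-action is indispensable: at each zero of $t$ the stalk is generated instead by the image of $t$ under the structure map $m$, so one must check that ${\rm div}\,t$ is disjoint from both ${\rm div}\,m$ and $\sigma({\rm div}\,t)$. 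I expect this transversality to follow from the freedom in choosing $t$ (and, if needed, $C$ and $m$), and it is the step I would expect to require the most care.
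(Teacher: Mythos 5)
Your proposal follows the same strategy as the paper's proof: reduce by the twist autoequivalence $\OO_Y(mH)\otimes_Y-$ to two parity classes of $c_1$, fix a smooth $\sigma$-invariant $(1,1)$-curve $C$ with an $A$-module structure on $\OO_C$ (hence on every $\OO_C(e)$ by Lemma \ref{NA}), and realise $M$ as the kernel of a surjective $A$-module map from a standard $A$-line bundle onto $i_*\OO_C(e)$; your bookkeeping $c_1(M)=c_1(\mathcal E)-\OO_Y(1,1)$ and $c_2(M)=c_2(\mathcal E)+e-c_1(\mathcal E).H$ agrees with the paper's computations. Your surjectivity step is also essentially the paper's, phrased differently: the paper arranges for the cokernel of $\OO_C\to\OO_C(e)$ to be skyscrapers in the Azumaya locus and invokes the fact that simple $M_2(k)$-modules are two-dimensional, which is precisely your observation that at a zero of $t$ the stalk is generated by the image of $t$ under the $L_\sigma$-action, provided ${\rm div}\,t$ avoids ${\rm div}\,m$ and the ramification; the freedom in choosing $t$ makes this unproblematic, and the paper is no more detailed here than you are. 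One point where you are actually more careful than the paper: since the quotient is supported on a $(1,1)$-curve, passing to the kernel replaces $c_1=nH$ by $(n-1)H$ and therefore always flips the twist-parity class. The paper's stated construction for the odd class, namely the kernel of $A\to\OO_C(n)$, thus has $c_1=\OO_Y(-2,-2)$ and lands back in the even class; your choice of source $\A\OO_Y(0,1)$, with $c_1=\OO_Y(0,0)$ and $c_2=0$, correctly produces kernels with $c_1=\OO_Y(-1,-1)$ and $c_2=e$, which (together with $A$ itself for $c_2=0$) genuinely covers the odd case and repairs this slip.
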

Before we begin the proof, we need the following lemma:
\begin{Lemma}\label{NA}
        Let $C$ be a smooth, $\sigma$-invariant $(1,1)$-divisor on $Y$ and $N\in\Pic C$.
        Endow $\OO_C$ with an $A$-module
        structure, which we saw is always possible from Cases 1-5 previously. Then 
        $N$ inherits an $A$-module structure from $\OO_C$.
\end{Lemma}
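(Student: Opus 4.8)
The plan is to produce the $A$-module structure on $N$ by transporting the one on $\OO_C$ along a suitable isomorphism, the only genuine input being that every line bundle on $C$ is $\sigma$-invariant. First I would recall the concrete description of the structure on $\OO_C$ from just before the statement: since $C$ is smooth it is isomorphic to $\P^1$, the involution $\sigma$ restricts to an involution $\sigma|_C$ of $C$, and, writing $\bar L:=L|_C$, an $A$-module structure on $\OO_C$ is precisely a map $m\in\Hom{C}{\bar L}{\OO_C}$ whose iterate $m\circ(1\otimes m)$ agrees with $\phi|_C$ under the identification $\bar L_\sigma^{\otimes 2}\simeq\OO_C(-\bar D)$.

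The key structural observation is that $\Pic C=\Pic\P^1=\Z$ and $\sigma|_C$ preserves degrees, so $(\sigma|_C)^*N\simeq N$ for every $N\in\Pic C$. Fixing such an isomorphism $\theta\!:(\sigma|_C)^*N\xrightarrow{\sim}N$ and using the standard identification $\bar L_\sigma\otimes_C N\simeq\bar L\otimes_C(\sigma|_C)^*N$ of left $\OO_C$-modules, I would define the candidate action on $N$ to be the composite
\[
\bar L_\sigma\otimes_C N\;\simeq\;\bar L\otimes_C(\sigma|_C)^*N\xrightarrow{m\otimes\theta}\OO_C\otimes_C N=N.
\]

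The substance of the proof is the verification of the associativity (module) axiom. Tracing the two $\sigma$-twisted tensor factors through $\bar L_\sigma\otimes_C\bar L_\sigma\otimes_C N$, the $\bar L$-components recombine exactly into $m\circ(1\otimes m)=\phi|_C$, i.e.\ precisely the axiom already known for $\OO_C$, while the two copies of $\theta$ that appear (one of them pulled back along $\sigma|_C$) compose, using $(\sigma|_C)^2=\mathrm{id}$, to the scalar $\theta\circ(\sigma|_C)^*\theta\in\mathrm{Aut}(N)=k^\times$. Hence the candidate action satisfies associativity up to this single scalar $\lambda$.

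The last step, and the only place where anything can go wrong, is to normalize that scalar. Replacing $\theta$ by $t\theta$ multiplies $\lambda$ by $t^2$, so since $k$ is algebraically closed I may take $t=\lambda^{-1/2}$ to arrange $\theta\circ(\sigma|_C)^*\theta=\mathrm{id}_N$; with this choice $\lambda=1$ and $m\otimes\theta$ is a genuine $A$-module structure on $N$. The main obstacle is therefore purely the bookkeeping of the $\sigma$-twists in the associativity diagram, together with the observation that the resulting obstruction is a square and hence killable over an algebraically closed field; everything else is formal.
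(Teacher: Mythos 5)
Your proof is correct, and it actually takes a more explicit route than the paper's, which is a one\--line formal argument: the paper defines the action on $N$ as the composite $A\otimes_Y N\to (\A\OO_C)\otimes_C N\xrightarrow{\psi\otimes 1}\OO_C\otimes_C N\to N$, where $\psi$ is the structure map of $\OO_C$, and leaves both the well\--definedness of $\psi\otimes1$ and the associativity check implicit. What your argument makes visible is precisely the point that makes $\psi\otimes 1$ delicate: on the summand $\bar L_\sigma$ the map $\psi$ intertwines the two $\OO_C$-actions through $\sigma|_C$ rather than being right $\OO_C$-linear on the nose, so to factor through the balanced tensor product one must, as you do, pass to $\bar L\otimes_C\sigma|_C^*N$ and supply an isomorphism $\theta\!:\sigma|_C^*N\to N$. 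That such a $\theta$ exists because $C\simeq\P^1$ forces $\sigma|_C^*$ to act trivially on $\Pic C$, and that the only obstruction to associativity is the scalar $\theta\circ\sigma|_C^*\theta\in k^\times$, which transforms by squares under rescaling and hence can be normalized to $1$ over an algebraically closed field of characteristic zero, is exactly right. Your version buys rigor and shows clearly where the hypotheses enter: the statement is genuinely false for arbitrary $\OO_C$-modules (a skyscraper $k_p$ with $\sigma(p)\neq p$ admits no $A$-module structure, since $L_\sigma\otimes_Y k_p$ is supported at $\sigma(p)$), so any complete proof must invoke $\sigma|_C^*N\simeq N$ somewhere, which the paper's displayed composite never does explicitly. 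The paper's route buys only brevity; the mathematical content is the same once the twist is unwound.
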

\begin{proof}
        We need give an $\OO_Y$-module morphism $A\otimes_C N\to N$ satisfying
        the required associativity condition. Suppose $\psi\!:A\otimes_C
        \OO_C\to\OO_C$ is the morphism
        which gives $\OO_C$ its $A$-module structure. Then $A\otimes_C
        N\to \A \OO_C\otimes_{C}N
        \stackrel{\psi\otimes 1} \longrightarrow\OO_C\otimes_CN\to N$ is the required morphism.
\end{proof}
\begin{proof}[Proof of theorem]
        The discriminant of any rank two vector bundle $M$, 
        defined to be the integer $4c_2(M)-c_1(M)^2$,
        is unchanged by tensoring with a line bundle (see Chapter 12.1 of in \cite{Potier}) and so as we saw
        before we can thus assume $c_1=\OO_Y(-1,-1)$ or $c_1=\OO_Y$. We deal with these two cases separately
        although the proofs will be very similar. Fix for the remainder of the proof a smooth $\sigma$-invariant
        $(1,1)$-divisor $C$ and an $A$-module structure on $\OO_C$.

        We will now construct an $A$-line bundle with $c_1=\OO_Y$ and $c_2=n$ for an arbitrary $n\geq 0$.
        Using Lemma \ref{NA} endow $\OO_C(n+2)$ with an $A$-module structure. Note that 
        \begin{align*}
              \Hom A{\A\OO_Y(1,1)}{\OO_{
        C}(n+2)}&=\Hom Y{\OO_Y(1,1)}{\OO_C(n+2)}  \\
        &=\Hom C{\OO_C}{\OO_C(n)}\\
        &=H^0(\P^1,\OO_{\P^1}(n))\neq 0.
        \end{align*}
        We claim that there is at least
        one morphism 
        $\varphi\!:\A\OO_Y(1,1)\to \OO_C(n+2)$ which is surjective. 
        From the above computation, we see that any $A$-module morphism 
        $\A\OO_Y(1,1)\to\OO_C(n+2)$ arises from an $\OO_Y$-module morphism $\phi\!:\OO_C\to\OO_C(n)$.
        Choose $\phi$ in such a away that ${\rm coker}\;\phi=\oplus k_{p_i}$, where
        $k_{p_i}$ is the skyscraper sheaf at $p_i$, with the $p_i$
        lying in the Azumaya locus of $A$. Then, since $A|_{p}=M_2(k)$,
        when we extend $\phi$
        to a morphism $\varphi\!:\A\OO_Y(1,1)\to\OO_C(n+2)$ we must have ${\rm coker}\;\varphi=0$
        for the simple representations of $M_2(k)$ are all two dimensional. Letting  $M:={\rm ker}\;\varphi$ we have
        \[0\longrightarrow M\longrightarrow\A\OO_Y(1,1)\longrightarrow \OO_C(n+2)\longrightarrow 0.
        \tag{$*$}\]
        It is easy to check that $M\in\Pic A$ with $c_1(M)=\OO_Y$ and $c_2(M)=n$.
       
        Constructing an $A$-line bundle with $c_1=\OO_Y(-1,-1)$ and $c_2=n$ for an arbitrary $n\geq 0$
        is an almost identical process where one finds a surjective morphism $\varphi\!: A\to \OO_C(n)$
        in the same manner as before and then proves that the kernel 
        must be a line bundle. A simple computation shows that this
        kernel has the desired Chern classes.
\end{proof}

\section{The Link}
In this section we establish a link between the moduli space of $A$-line bundles with
$c_1=\OO_Y(-2,-2)$ and $c_2=2$, which as before we denote by ${\bf Pic}\;A$, and
the Hilbert scheme of $A$, which parameterises quotients of $A$ with $c_1=\OO_Y(1,1)$
and $c_2=2$, which as before is denoted by $\Hilb$. In particular we will show that
$\Hilb$ is a ruled surface over ${\bf Pic}\;A$. Thus by using the map
$\Psi$ from the previous section, we will calculate $(K_{\Hilb})^2$, which will allow
us to determine the genus of ${\bf Pic}\;A$.

We have already seen this link between line bundles on $A$ and quotients of $A$. It is summarised
with the following exact sequence \[0\longrightarrow M\longrightarrow A\longrightarrow Q\longrightarrow 0\] where:
\begin{itemize}
        \item $Q\simeq \OO_C$ as an $\OO_Y$-module, which occurs precisely when $M\in \Pic A$ with
                $M\simeq \OO_Y(-1,-1)\oplus\OO_Y(-1,-1)$ as an $\OO_Y$-module, or
        \item $Q\simeq \A \OO_F$, where $F$ is a $(1,0)$ (respectively $(0,1)$) divisor, which occurs 
                precisely when $M\simeq \A\OO_Y(-1,0)$
                (respectively $\A\OO_Y(0,-1)$).
\end{itemize} 
Furthermore, we saw in Proposition \ref{PropMinA} that in both cases $\hom AMA =2$ which
suggests there is a $\P:1$ map $\Hilb\to {\bf Pic}\;A$. We prove this now.
\begin{Thm}
        $\Hilb$ is a ruled surface over ${\bf Pic}\;A$.
\end{Thm}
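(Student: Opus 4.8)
The plan is to realise the natural ``kernel'' morphism $f\!:\Hilb\to{\bf Pic}\;A$ and to prove that it is a $\P^1$--bundle. First I would build $f$. Let $\sheafF$ be the universal quotient on $Y\times_k\Hilb$ and put $\mathcal I:=\ker(A_{\Hilb}\to\sheafF)$. By Proposition \ref{PropKer}, $\mathcal I$ is a flat family of $A$--line bundles with $c_1=\OO_Y(-2,-2)$ and $c_2=2$, so by the coarse moduli property of ${\bf Pic}\;A$ (Theorem 2.4 of \cite{Hoffmann}) it induces a morphism $f\!:\Hilb\to{\bf Pic}\;A$ carrying the point of a quotient $A\to Q$ to the class of $M:=\ker(A\to Q)$.

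Next I would identify the fibres. Fix $[M]\in{\bf Pic}\;A$. A point of $f^{-1}([M])$ is an $A$--submodule of $A$ isomorphic to $M$, i.e. the image of an injection $M\hookrightarrow A$. Proposition \ref{PropMinA} gives $\hom AMA=2$ with every nonzero map injective, and a direct computation shows $\ext 1AMA=0$ and ${\rm End}_AM=k$ in each of the cases of Theorem \ref{quotientscomputation}: for $M\simeq\A\OO_Y(-1,0)$ one uses the isomorphism $\Ext iA{\A N}{-}\simeq\Ext iYN{-}$ of \cite{main} together with vanishing of $H^1$ on $Y$, and for $M\simeq\OO_Y(-1,-1)^{\oplus2}$ one uses stability of $M$ as an $A$--module and Serre duality for the order. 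Since ${\rm End}_AM=k$, two injections share an image exactly when they differ by a scalar, so set-theoretically $f^{-1}([M])=\P(\Hom AMA)\simeq\P^1$.

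Finally I would upgrade this to a bundle structure. The map $f$ is a surjection from the smooth projective surface $\Hilb$ to the smooth projective curve ${\bf Pic}\;A$; all its fibres are one--dimensional, so by miracle flatness $f$ is flat, proper, and of relative dimension one, and each geometric fibre is a reduced $\P^1$, whence $f$ is smooth. A smooth proper morphism whose fibres are all isomorphic to $\P^1$ is a Severi--Brauer scheme of relative dimension one; over the curve ${\bf Pic}\;A$ the obstruction to triviality lies in the Brauer group of $k({\bf Pic}\;A)$, which vanishes by Tsen's theorem. Hence $f$ is a $\P^1$--bundle and $\Hilb\simeq\P(\mathcal E)$ for a rank two bundle $\mathcal E$ on ${\bf Pic}\;A$ with fibre $\Hom AMA$, locally free because $\hom AMA$ is constant and $\ext 1AMA=0$. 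Thus $\Hilb$ is a geometrically ruled surface over ${\bf Pic}\;A$; see \cite{Hartshorne} Chapter V.

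The step I expect to be the main obstacle is the passage from the set-theoretic description of the fibre to a scheme-theoretic one valid in families. Because ${\bf Pic}\;A$ is only a coarse moduli space there need be no universal $A$--line bundle on it, so one cannot immediately form $\mathcal E$ by pushing forward a relative $\HOM A{\mathcal M}{A}$. I would circumvent this either by working étale-locally on ${\bf Pic}\;A$, where a (possibly twisted) universal family exists and $\HOM A{\mathcal M}{A}$ is locally free of rank two by cohomology and base change (using $\hom AMA=2$, $\ext 1AMA=0$), and then descending; or by the Tsen argument above, which needs only that $f$ is a smooth proper $\P^1$--fibration and avoids the universal family altogether. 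Ruling out a non-reduced or reducible fibre structure is the crux, and it is precisely the uniform vanishing ${\rm End}_AM=k$, $\ext 1AMA=0$ that makes every fibre a smooth $\P^1$.
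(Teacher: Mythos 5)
Your proposal follows essentially the same route as the paper: the kernel of $A_{\Hilb}\to\sheafF$ is a flat family of $A$-line bundles by Proposition \ref{PropKer}, inducing a morphism $\Hilb\to{\bf Pic}\;A$, and the fibres are $\P^1$ because $\hom AMA=2$ with every nonzero map injective (Proposition \ref{PropMinA}); your extra care with ${\rm End}_A M=k$, flatness, and the Tsen/Severi--Brauer argument only sharpens the step the paper dismisses as ``clear''. The one point you take for granted --- that ${\bf Pic}\;A$ is a connected smooth projective curve --- is precisely what the paper establishes inside this proof, using smoothness of the moduli space, the one-dimensional tangent space of Proposition \ref{Propdim}, and the connectedness of the surface $\Hilb$ surjecting onto it; your setup yields the same conclusion once you record that surjectivity follows from Theorem \ref{quotientscomputation}.
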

\begin{proof}
        Let $\sheafF$ be the universal family on $\Hilb$. From Proposition \ref{PropKer}
        ${\rm ker}\;(A_{\Hilb}\to \sheafF)$ is a flat family of $A$-line bundles on
        $\Hilb$ and so we get a map $\Phi\!:\Hilb\to{\bf Pic}\;A$. 
        $\bf M$ being smooth and together with Proposition \ref{Propdim} implies one of its components
        is a curve. However, from the previous section we know that
        $\Hilb$ is a smooth projective surface and thus ${\bf Pic}\;A$ must
        in fact be connected and hence must be a smooth projective curve. It thus suffice to show
        that every fibre of $\Phi$ is isomorphic to $\P^1$ which is clear from Proposition \ref{PropMinA}
\end{proof}
Since $\Hilb$ is a ruled surface over ${\bf Pic}\;A$ we can determine the genus of
${\bf Pic}\;A$ using Corollary 2.11 in Chapter 5 of \cite{Hartshorne} which
states that \[(K_{\Hilb})^2=8(1-g({\bf Pic}\;A)).\] Furthermore, we can determine
$(K_{\Hilb})^2$ using the map $\Psi$.
\begin{Thm}
        The moduli space parameterising $A$-line bundles with $c_1=\OO_Y(-2,-2)$
        and $c_2=2$ is a smooth projective curve of genus $2$.
\end{Thm}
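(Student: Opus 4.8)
The plan is to leverage the structure already assembled: the preceding theorem exhibits $\Hilb$ as a geometrically ruled surface over the smooth projective curve ${\bf Pic}\;A$, so by Corollary 2.11 in Chapter 5 of \cite{Hartshorne} one has $(K_{\Hilb})^2 = 8\bigl(1 - g({\bf Pic}\;A)\bigr)$. Since the smoothness and curve structure of ${\bf Pic}\;A$ are already in hand, it remains only to compute $(K_{\Hilb})^2$ and read off the genus; I expect to find $(K_{\Hilb})^2 = -8$, which forces $g = 2$.

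To compute $(K_{\Hilb})^2$ I would use the finite degree $8$ cover $\Psi\!:\Hilb\to\dualP$. Writing $h:=\Psi^*\OO_{\dualP}(1)$, the Hurwitz formula for a finite morphism of smooth surfaces in characteristic zero gives $K_{\Hilb} = \Psi^*K_{\dualP} + R = -3h + R$, where $R$ is the ramification divisor. Expanding,
\[
(K_{\Hilb})^2 = 9\,h^2 - 6\,h\cdot R + R^2 = 72 - 6\,h\cdot R + R^2,
\]
since $h^2 = (\deg\Psi)\,\OO_{\dualP}(1)^2 = 8$. The divisor $R$ is supplied by the case analysis of Section \ref{SectionOc}: over each conic $E^\vee$ and $E'^\vee$ the cover has fibre profile $(2,2,1,1,1,1)$ (Cases 2 and 6), contributing two reduced ramification curves apiece, while over each of the four bitangents the profile is $(2,2,2,2)$ (Case 3), contributing four reduced ramification curves. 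All ramification is simple, so each such curve enters $R$ with multiplicity one, and projecting via $\Psi_*$ together with the degrees with which these curves cover their images in $\dualP$ yields $h\cdot R$ directly.

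The genuinely delicate step will be $R^2$, for which the ramification count alone is insufficient: one must locate the classes of the ramification curves in $\mathrm{NS}(\Hilb)$. Here I would exploit the ruling. Every fibre $f$ of $\Phi\!:\Hilb\to{\bf Pic}\;A$ satisfies $h\cdot f = 2$, and the two ramification curves over $E^\vee$ are exactly the special fibres $\Phi^{-1}([\A\OO_Y(-1,0)])$ and $\Phi^{-1}([\A\OO_Y(0,-1)])$ --- the fixed locus of the involution $\tau$ of Proposition \ref{PropInv} --- hence are of class $f$. Determining the remaining ramification curves over $E'^\vee$ and the bitangents through their intersections with $f$ and with $h$ then pins down $R^2$, and assembling everything gives $(K_{\Hilb})^2 = -8$, whence $g({\bf Pic}\;A) = 2$. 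As an independent check that sidesteps $R^2$ altogether, one may compute the topological Euler characteristic: stratifying $\dualP$ according to Cases 1--8 and summing $(\text{size of the fibre of }\Psi)\times(\text{Euler characteristic of the stratum})$ gives $e(\Hilb) = -4$, and since a $\P^1$-bundle over a genus $g$ curve has $e = 4(1-g)$, this again yields $g = 2$.
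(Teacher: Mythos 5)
Your proposal follows the same main route as the paper: exhibit $\Hilb$ as ruled over ${\bf Pic}\;A$ so that $(K_{\Hilb})^2=8(1-g)$, then compute $(K_{\Hilb})^2$ via the Hurwitz formula for the $8:1$ cover $\Psi$, reading the ramification divisor off the case analysis of Section \ref{SectionOc}; your numbers ($(\Psi^*K)^2=72$, $h\cdot R=24$, and the need for $R^2=64$) are consistent with the paper's term-by-term computation. Where you genuinely diverge is in the treatment of $R^2$. The paper observes that the ramification curves over $E^\vee$ and $E'^\vee$ are \'etale double covers of conics, hence each splits into two rational curves, and then applies adjunction to each component to get self-intersection zero, while the bitangent components are handled by the projection formula alone. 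You instead propose to locate the components in ${\rm Num}(\Hilb)\simeq\Z^2$ using the ruling; this works, since for any divisor $D$ one has $D^2=(D\cdot f)(D\cdot h)-2(D\cdot f)^2$, and your identification of the ramification over $E^\vee$ with the fixed locus of $\tau$, i.e.\ the two fibres over $[\A\OO_Y(-1,0)]$ and $[\A\OO_Y(0,-1)]$, is exactly the paper's closing observation. The one ingredient you still owe is $R_i\cdot f$ for the remaining components; this can be extracted from $K_{\Hilb}\cdot f=-2$, which forces $R\cdot f=4$, together with the fact that the curves over both conics are unions of fibres and the symmetry among the four bitangent curves, giving $R_i\cdot f=1$ and $R_i^2=2$ for $i=3,\dots,6$, in agreement with the paper. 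Your Euler-characteristic check is a genuinely independent argument not in the paper: stratifying $\dualP$ by Cases 1--8 does give $e(\Hilb)=72-36-36-48+16+12+8+8=-4=4(1-g)$, and it has the advantage of bypassing $R^2$ (indeed the entire intersection calculus) entirely, at the cost of needing the fibre counts over every stratum rather than only the ramified ones.
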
 
\begin{proof}
        As discussed above, all that we need to do is compute
        $(K_{\Hilb})^2$. Recall from before that
        we have an $8:1$ map $\Psi\!:\Hilb\to \dualP$.
        Thus using Formula 19 of Section 16 in Chapter 1 of \cite{Barth} we have: \[K_{\Hilb}=\Psi^*K_{\dualP}+R\]
        where $R$ is the ramification divisor on $\Hilb$. 
        
        Let us describe $R$. Looking at Case 2 of Section \ref{sec1} we define $R_1$ and $U_1$ to be the divisors
        such that $\Psi^*E'^\vee=2R_1+U_1$. Similarly looking at Case 6 in Section \ref{sec2} we define $R_2$ and $U_2$ to
        be such that $\Psi^*E^\vee=2R_2+U_2$.        
        Denote by $L_3,\cdots,L_6$ the four bitangents to $E^\vee\cup E'^\vee$. Looking at Case 3
        of Section \ref{sec1} we see that $\Psi^*L_i$ is two divisible and we let $R_i$ be such that
        $\Psi^*L_i=2R_i$. Thus $R=R_1+R_2+\cdots+R_6$.
        
        We now compute $(K_{\Hilb})^2=(\Psi^*(K_{\dualP})+R_1+\cdots+R_6)^2$.
        Throughout this calculation $K$ denotes $K_{\dualP}$.
        \begin{itemize}
                \item $(\Psi^*K)^2=8\cdot (-3)^2=72$
                \item $(\Psi^*K).R_1=K.(\Psi_*R_1)=
        2K.E'=2\cdot(-6)=-12$. Similarly,
                \item $(\Psi^*K).R_2=-12$.
                \item $(\Psi^*K).R_i=K.(\Psi_*R_i)=4K.L_i=4\cdot(-3)=-12$ for $i=3,\cdots,6$.
                \item $R_1.R_2=0$ from Case 7 on page \pageref{case7}.
                \item $R_1.R_i=\frac{1}{2}R_1.(\Psi^*L_i)=\frac{1}{2}(\Psi_*R_1).L_i=\frac{1}{2}2E'.L_i=2$ for 
        $i=3,\cdots,6$. Similarly,
                \item $R_2.R_i=2$ for $i=3,\cdots,6$.
                \item $R_i.R_j=\frac{1}{2}(\Psi^*L_i).R_j=\frac{1}{2}L_i.(\Psi_*R_j)
                        =\frac{1}{2}L_i.4L_j=2L_i.L_j=2$ for $i,j=3,\cdots,6$.
        \end{itemize}
        What remains is to compute $R_1^2$ and $R_2^2$. 

        We can see from Case 2, 5 and 7 that $\Psi|_{R_1}\!:R_1\to E'$ 
        is an \'etale double cover of $E'$. Thus $R_1=R_1'+R_1''$ where
        both $R_1'$ and $R_1''$ have genus zero. We now use the adjunction formula (Proposition
        1.5 in Chapter V of \cite{Hartshorne})
        to compute $R_1'^2$. We have
        \begin{align*}
                -2&=R_1'.(2R_1'+\Psi^*K+R_2+R_3+\cdots+R_6)\\
                &= 2R_1'^2+E'.K+0+4\cdot R_1'.\frac{1}{2}(\Psi^*L_3)\\
                &=2R_1'^2-6+4\cdot 1=2R_1'^2-2.
        \end{align*}Thus $R_1'^2=0$ and an identical computation shows $R_1''^2=0$. Thus
        $R_1^2=0$. 

        The same argument shows $R_2^2=0$ since $R_2\to E$ is also an \'etale double cover.


        Thus
        \begin{align*}
                (K_{\Hilb})^2&=(\Psi^*K)^2+R_1^2
                +\cdots+R^6+\\&+2\left((\Psi^*K).R_1+\cdots
                (\Psi^*K).R_6+\sum R_i.R_j\right)\\
               &=72+0+0+4\cdot 2+2\left( 6\cdot(-12)4\cdot 2+4\cdot 2+6\cdot 2 \right)\\
                &=-8
        \end{align*} and so $g({\bf Pic}\;A)=2$.
\end{proof}
Note that at no stage did we use the fact that $\Hilb$ is ruled in order to calculate $(K_{\Hilb})^2$. In
particular, we didn't use the fact that we knew in advance that $(K_{\Hilb})^2$ is a multiple of eight.
We could have simplified the computation above if we had done so, but it seemed nice to spend the extra work
and get an independent confirmation that fact.

As we saw in the above proof $R_2$ is the union of two $\P^1$'s. These $\P^1$'s are
fibres of $\Phi\!:\Hilb\to {\bf Pic}\;A$ above the two very special points on ${\bf Pic}\;A$ corresponding
to the $A$-line bundles $\A\OO_Y(-1,0)$ and $\A\OO_Y(0,-1)$. Since $R_1$ is also a union of
two $\P^1$'s it would have been nice to find the two $A$-line bundles which they are fibres
of, but unfortunately, we were unable to do so.
\\ \\
\noindent Author: Boris Lerner\\
Email: boris@unsw.edu.au\\
Address: School of Mathematics and Statistics\\
University of New South Wales\\
Sydney, 2052, NSW\\
Australia.

\bibliographystyle{alpha}

\end{document}